\documentclass[11pt]{amsart}
\usepackage{amssymb,latexsym}
\usepackage{amsmath}
\usepackage{amsfonts}
\usepackage{amsthm}
\usepackage{cite}
\usepackage{enumerate}
\usepackage{graphicx}
\theoremstyle{plain}
\newtheorem{thm}{Theorem}
\newtheorem{corollary}{Corollary}

\newtheorem{lemma}{Lemma}
\newtheorem{proposition}{Proposition}
\theoremstyle{definition}
\newtheorem{definition}{Definition}
\theoremstyle{remark}
\newtheorem{remark}{Remark}

\theoremstyle{remark}
\newtheorem*{notation}{Notation}

\numberwithin{equation}{section}
\linespread{1.3}

\begin{document}
\title[Nonlinear Stochastic homogenization in Orlicz Spaces]
{Nonlinear Stochastic homogenization in Orlicz Spaces and applications}
\author{Dimitris Kontogiannis}
\address{Department of Mathematics\\
Iowa State University\\
Ames, IA 50010}
\email{dkontog@iastate.edu}

\keywords{Homogenization, Orlicz-Sobolev spaces, random media}
\subjclass[2000]{Primary: 06B10; Secondary: 06D05}
\date{}
\maketitle
\section{Introduction}
In this work, we are interested in the stochastic homogenization of integral functionals defined in Orlicz-Sobolev spaces. We present a general version of nonlinear stochastic homogenization in these spaces and apply the general versions to homogenization problems in random media. In particular, we introduce a notion of $\Gamma-$convergence in Orlicz-Sobolev spaces and we show the compactness of a class $\mathcal{F}$ of integral functionals with respect to this convergence. To guarantee that the $\Gamma-$limit is a measure, one has to give a criterion called fundamental estimate which the class of functionals must satisfy. For classical $L^{p}-$spaces, such estimate was introduced by DeGiorgi and further developed by Dal Maso, Modica, Braides. A distance function is also defined as a metric so that the family of minimizers of these functionals is continuous with respect to this metric. Using results of ergodic theory, we prove a stochastic theorem concerning the limit of minimizers, which is an extension of \cite{DalMod86}. Finally, we apply the theorem to homogenization over a class of partial differential equations defined in Orlicz spaces. A technique of  such homogenization problems has been developed in \cite{Khrus05}. We improve these methods using continuum percolation models.  We also mention the possible improvement of existing results on homogenization of $p-$Laplace type equations\cite{MR2569903}. 

The paper is organized as follows.
\begin{enumerate}[I]
\item Basic properties of Orlicz Spaces, embedding theorems and continuity of nonlinear superposition (Nemytskii) operators.
\item Integral functionals in Orlicz spaces, lower semicontinuity and existence of minimizers.
\item $\Gamma-$convergence, integral representation, uniform estimates and compactness of $\Gamma-$limits.
\item Random functionals and ergodic theory.
\item Application of ergodic theorem to homogenization of equations with generalized growth conditions. Discussion on improvement of results for equations defined in Sobolev spaces with variable exponent ($p-$Laplace equations).
\end{enumerate}

\section{Preliminaries}

A function $\Phi$ is called a Young function if it admits the presentation
\[\Phi(u)=\int_{0}^{u}\phi(t)dt\]
where $\phi(t)$, $t>0$ satisfies
\begin{enumerate}
\item $\phi(0)=0$
\item $\phi(t)>0$ for all $t\geq 0$
\item $\phi$ is nondecreasing, right continuous
\item $\displaystyle\lim_{t\rightarrow\infty}\phi(t)=\infty$
\item $u\phi(u)<a\Phi(u)$ for $a>1$ and for all $u\geq 0$
\end{enumerate}

The function $\Phi$ is called an $N-$ function. We say that $\Phi$ satisfies the $\Delta_{2}-$condition (or has the doubling property) if there exists $k>0$, and $l\geq 0$ such that
\[\Phi(2u)\leq k\Phi(u)\]
for all $u>l$.

Then $\Phi$ is nonnegative, continuous, strictly increasing, convex function on $[0,\infty)$. The complementary function $\Psi$ to $\Phi$ is defined by the formula
\[\Psi(v)=\max_{u>0}[uv-\Phi(u)]\]

The following Young's inequality holds:
\[uv\leq \Phi(u)+\Psi(v)\]

Let $\Omega$ be an open subset of $\mathbb{R}^{n}$. We define the Orlicz class  $\tilde{L}_{\Phi}(\Omega)$ as the set of all measurable functions $u$ such that
\[\rho(u,\Phi,\Omega)=\int_{\Omega}\Phi(|u(x)|)dx<\infty\]

By Young's inequality, the norm
\[\||u\||_{\Phi,\Omega}=\sup_{\rho(v,\Phi,\Omega)\leq 1}\left|\int_{\Omega}u(x)v(x)dx\right|\]
is well defined in $\tilde{L}_{\Phi}(\Omega)$. If both $\Phi$, $\Psi$ satisfy the $\Delta_{2}-$condition, then the class $\tilde{L}_{\Phi}(\Omega)$ equipped with either the previous norm or the Luxenburg norm

\[\|u\|_{\Phi,\Omega}=\inf\left\{\lambda>0:\int_{\Omega}\Phi\left(\frac{|u(x)|}{\lambda}\right)~dx\leq 1\right\}\]
is a reflexive Banach space that we will denote by $L_{\Phi}(\Omega)$. For $u\in L_{\Phi}(\Omega)$, $v\in L_{\Psi}(\Omega)$, the following Holder inequality holds:
\[\int_{\Omega}uvdx\leq\|u\|_{\Phi,\Omega}\|v\|_{\Psi,\Omega}\]

Also,
\begin{equation}
\begin{array}{l}
\|u\|_{\Phi,\Omega}\leq \rho(u,\Phi,\Omega)+1
\end{array}
\label{eq:1}
\end{equation}
and if $\|u\|_{\Phi,\Omega}\leq 1$,
\begin{equation}
\begin{array}{l}
\rho(u,\Phi,\Omega)\leq \|u\|_{\Phi,\Omega}
\end{array}
\label{eq:2}
\end{equation}
We say that the sequence $u_{n}\in L_{\Phi}$ converges in the mean to $u$ provided
\[\rho(u_{n}-u,\Phi,\Omega)\rightarrow 0\]as $n\rightarrow\infty$. If the $\Delta_{2}$ condition holds, the convergence in the mean is equivalent to the convergence in norm.

\subsection{Orlicz-Sobolev spaces}

Consider the space of smooth functions $C^{1}(\Omega)$ endowed with the norm
\[\|u\|^{1}_{\Phi,\Omega}=\max_{|\alpha|\leq 1}\{\|D^{\alpha}u\|_{\Phi,\Omega}\}\]
The Orlicz-Sobolev space $W_{\Phi}^{1}(\Omega)$ is defined as the closure of $C^{1}(\bar{\Omega})$ with respect to this norm. The closure of $C_{0}^{\infty}(\Omega)$ to this norm is denoted by $W_{0,\Phi}^{1}(\Omega)$ and is a subspace of $W_{\Phi}^{1}(\Omega)$. The following embedding theorem holds \cite{Tru71}: if $\Omega$ is a bounded domain in $\mathbb{R}^{n}$ with smooth boundary, the embedding of $W_{\Phi}^{1}(\Omega)$ into $L_{\Phi}(\Omega)$ is compact and
\[\|u\|_{\Phi,\Omega}\leq C\|Du\|_{\Phi,\Omega}\]
for all $u\in W_{0,\Phi}^{1}(\Omega)$.

\begin{notation}
For two Young functions $Q$, $P$, we will use the symbol $P\prec\prec Q$ when $Q$ grows more rapidly than $P$ near infinity, i.e. for all $\delta>0$,
\[\lim_{t\rightarrow\infty}\frac{P(t)}{Q(\delta t)}=0\]
\end{notation}

\begin{lemma}[\cite{Krasno61},\cite{Benki99}]
Let $\Omega$ be an open subset of $\mathbb{R}^{n}$ of finite measure and let $\Phi,P$ be Young functions satisfying the $\Delta_{2}$ condition. Suppose that $g:\Omega\times\mathbb{R}\rightarrow\mathbb{R}$ is a Caratheodory function such that
\[P(|g(x,s)|)\leq k_{1}\Phi(|s|)\]
for some constant $k_{1}$. Then the Nemytskii operator $T_{g}(u)(x)=g(x,u(x))$ is strongly continuous from $L_{\Phi}(\Omega)$ to $L_{P}(\Omega)$.
\end{lemma}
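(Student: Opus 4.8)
The plan is to verify first that $T_g$ actually maps $L_\Phi(\Omega)$ into $L_P(\Omega)$, and then to prove sequential continuity by a subsequence argument resting on a.e.\ convergence together with an $n$-independent $L^1$ dominating function, so that the dominated convergence theorem applies to the modulars. For well-definedness: if $u\in L_\Phi(\Omega)$ then, since $\Phi\in\Delta_2$, the Orlicz class coincides with the Orlicz space and $\rho(u,\Phi,\Omega)=\int_\Omega\Phi(|u|)\,dx<\infty$; the growth hypothesis gives $\int_\Omega P(|g(x,u(x))|)\,dx\le k_1\rho(u,\Phi,\Omega)<\infty$, and using $\Delta_2$ for $P$ we conclude $T_g(u)\in L_P(\Omega)$ (measurability of $x\mapsto g(x,u(x))$ being automatic since $g$ is Carath\'eodory).

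For continuity it is enough, by the equivalence of norm and mean convergence under $\Delta_2$, to show that $\rho(u_n-u,\Phi,\Omega)\to 0$ forces $\rho(T_g(u_n)-T_g(u),P,\Omega)\to 0$; and since $L_P(\Omega)$ is a metric space, it suffices to show that every subsequence of $(u_n)$ has a further subsequence along which this holds. So fix a subsequence and, passing to a further one (not relabeled), assume $\rho(u_n-u,\Phi,\Omega)\le 2^{-n}$. Then $h:=\sum_n\Phi(|u_n-u|)\in L^1(\Omega)$ by monotone convergence, hence $\Phi(|u_n-u|)\to 0$ a.e.; since $\Phi$ is continuous and strictly increasing with $\Phi(0)=0$, this gives $u_n\to u$ a.e.\ in $\Omega$.

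Next I would build the dominating function. Writing $k,l$ for the $\Delta_2$ constants of $\Phi$ and $k_P,l_P$ for those of $P$, the $\Delta_2$ inequality in the form $\Phi(2t)\le k\Phi(t)+\Phi(2l)$ for all $t\ge 0$ (the additive constant absorbing the range $t\le l$) combined with convexity of $\Phi$ yields
\[
\Phi(|u_n|)\le\tfrac12\Phi(2|u_n-u|)+\tfrac12\Phi(2|u|)\le\tfrac{k}{2}\bigl(h+\Phi(|u|)\bigr)+\Phi(2l)=:G\in L^1(\Omega),
\]
where $G\in L^1$ because $|\Omega|<\infty$ and $h,\Phi(|u|)\in L^1$. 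The same splitting for $P$, together with the growth bound $P(|g(x,s)|)\le k_1\Phi(|s|)$, gives
\[
P\bigl(|g(x,u_n(x))-g(x,u(x))|\bigr)\le\tfrac{k_P}{2}\bigl(P(|g(x,u_n(x))|)+P(|g(x,u(x))|)\bigr)+P(2l_P)\le\tfrac{k_Pk_1}{2}\bigl(G+\Phi(|u|)\bigr)+P(2l_P),
\]
and the right-hand side is a fixed $L^1(\Omega)$ function independent of $n$. On the other hand the Carath\'eodory property gives $g(x,u_n(x))\to g(x,u(x))$ a.e., so by continuity of $P$ with $P(0)=0$ the left-hand side tends to $0$ a.e. Dominated convergence then yields $\int_\Omega P(|g(x,u_n(x))-g(x,u(x))|)\,dx\to 0$, i.e.\ mean convergence in $L_P$, hence norm convergence by $\Delta_2$. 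This establishes the subsequential claim and therefore the lemma.

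The main obstacle is precisely the construction of this $n$-independent $L^1$ majorant: it requires using the $\Delta_2$ condition for \emph{both} $\Phi$ and $P$ (to tame the factor-of-$2$ dilations produced by convexity), the prescribed growth $P(|g(x,s)|)\le k_1\Phi(|s|)$, and the finiteness of $|\Omega|$ (needed because $\Delta_2$ is only assumed for large arguments, producing additive constants that must be integrable). Everything else is routine: a.e.\ convergence along a summable subsequence, dominated convergence, and the metric-space trick upgrading subsequential convergence to convergence of the full sequence.
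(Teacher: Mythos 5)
Your proof is correct, and its skeleton (a.e.\ convergence along a subsequence, an $n$-independent integrable majorant, dominated convergence applied to the modular, then $\Delta_2$ to pass from mean to norm convergence) is the same as the paper's. The differences are in how the two ingredients are obtained and in the final step. The paper outsources the extraction of an a.e.\ convergent subsequence together with an $L_\Phi$ majorant $h$ to its Theorem~1 (a Riesz--Fischer-type statement), and then dominates $P(|g(\cdot,u_n)|)\le k_1\Phi(|h|)$ directly; you instead prove that step from scratch via the summable-subsequence trick ($\rho(u_n-u,\Phi,\Omega)\le 2^{-n}$, $h=\sum_n\Phi(|u_n-u|)\in L^1$), which makes the argument self-contained and forces you to track where $\Delta_2$ and $|\Omega|<\infty$ are really used. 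More substantively, the paper's dominated-convergence step only yields $\int_\Omega P(|g(\cdot,u_n)|)\to\int_\Omega P(|g(\cdot,u)|)$ and then asserts convergence of $g(\cdot,u_n)$ to $g(\cdot,u)$ in $L_P$; that last inference requires an extra (Scheff\'e-type or direct-domination) argument that the paper omits. You close exactly that gap by dominating $P(|g(\cdot,u_n)-g(\cdot,u)|)$ itself, using convexity and the $\Delta_2$ condition for $P$ to absorb the dilation by $2$, so dominated convergence gives mean convergence of the difference outright. You also make explicit the additive constants $\Phi(2l)$, $P(2l_P)$ arising because $\Delta_2$ is only assumed for large arguments, which is where the hypothesis $|\Omega|<\infty$ enters; the paper does not comment on this. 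In short: same route, but your version is tighter at the one point where the paper's proof is loose.
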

For the proof we need the following theorem.
\begin{thm}
Suppose $\Phi\in\Delta_{2}$ and $\Omega\subset\mathbb{R}^{n}$. If $u_{n}\rightarrow u$ in $L_{\Phi}(\Omega)$ there exists a subsequence $u_{n_{k}}$ and $h\in L_{\Phi}(\Omega)$ such that
\begin{displaymath}u_{n_{k}}\rightarrow u\mbox{ a.e. in}\Omega \end{displaymath} and \begin{displaymath}|u_{n_{k}}|\leq h\mbox{ a.e. in}\Omega \end{displaymath}
\end{thm}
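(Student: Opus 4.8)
The plan is to adapt the classical proof of the corresponding fact for $L^p$-spaces (the Riesz–Fischer argument that accompanies the Dominated Convergence Theorem) to the Orlicz setting, using the hypothesis $\Phi\in\Delta_2$ to translate norm convergence into mean convergence. First I would invoke the last sentence of the preliminaries: since $\Phi$ satisfies the $\Delta_2$-condition, the assumption $u_n\to u$ in $L_\Phi(\Omega)$ is equivalent to convergence in the mean, i.e. $\rho(u_n-u,\Phi,\Omega)=\int_\Omega \Phi(|u_n-u|)\,dx\to 0$. From this I extract a rapidly converging subsequence $u_{n_k}$ with
\[
\rho(u_{n_{k+1}}-u_{n_k},\Phi,\Omega)=\int_\Omega \Phi\bigl(|u_{n_{k+1}}-u_{n_k}|\bigr)\,dx\le 2^{-k}
\]
for every $k\ge 1$ (and also $\rho(u_{n_1}-u,\Phi,\Omega)\le 1$, say).

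Next I would build the dominating function explicitly. Set
\[
g(x)=|u_{n_1}(x)|+\sum_{k=1}^{\infty}\bigl|u_{n_{k+1}}(x)-u_{n_k}(x)\bigr|,
\]
which is a well-defined $[0,\infty]$-valued measurable function. The key estimate is that $g\in L_\Phi(\Omega)$: writing $g_N$ for the $N$-th partial sum, convexity of $\Phi$ together with a dyadic-weight trick (e.g. $\Phi\bigl(\sum a_k\bigr)\le \sum 2^{-k}\Phi(2^k a_k)$ for a normalized convex combination, combined with the $\Delta_2$-bound $\Phi(2^k t)\le C\,k^{m}\Phi(t)$ that $\Delta_2$ provides at the level of integrals) lets me bound $\int_\Omega\Phi(c\,g_N)\,dx$ uniformly in $N$ by a convergent series; monotone convergence then gives $\rho(c\,g,\Phi,\Omega)<\infty$ for a suitable $c>0$, hence $g\in L_\Phi(\Omega)$ and in particular $g<\infty$ a.e. Consequently the telescoping series defining $\lim_k u_{n_k}(x)$ converges absolutely for a.e.\ $x$; call the pointwise limit $\tilde u$. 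Each partial sum is dominated: $|u_{n_k}(x)|\le g(x)$ a.e., so in the limit $|\tilde u(x)|\le g(x)$ a.e.

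Finally I would identify $\tilde u$ with $u$. From $|u_{n_k}-\tilde u|\le 2g\in L_\Phi$ and $u_{n_k}\to\tilde u$ a.e., dominated convergence (applied to $\Phi(|u_{n_k}-\tilde u|)\le \Phi(2g)\in L^1$) yields $\rho(u_{n_k}-\tilde u,\Phi,\Omega)\to 0$, i.e.\ $u_{n_k}\to\tilde u$ in $L_\Phi$; since also $u_{n_k}\to u$ in $L_\Phi$, uniqueness of limits gives $\tilde u=u$ a.e. Taking $h=g$ (or $h=2g$) then furnishes a subsequence with $u_{n_k}\to u$ a.e.\ and $|u_{n_k}|\le h$ a.e., as claimed. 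The main obstacle is the middle step — verifying $g\in L_\Phi(\Omega)$ — because an $N$-function need not satisfy $\Phi(t_1+t_2)\le \Phi(t_1)+\Phi(t_2)$ nor a clean polynomial $\Delta_2$-bound; one must combine convexity with the $\Delta_2$-condition carefully (and use that $|\Omega|<\infty$ to absorb the threshold $l$ in the $\Delta_2$-condition $\Phi(2u)\le k\Phi(u)$ for $u>l$) so that the rapidly decaying tail $2^{-k}$ of the series beats the growth factor picked up from $\Phi(2^k\,\cdot)$. Everything else is a routine transcription of the $L^p$ argument.
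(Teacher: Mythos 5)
Your overall strategy is the right one and is, as far as one can tell, exactly what the paper intends: the paper gives no proof of Theorem 1 at all, only the remark that it is a modification of Theorem 2.3 of the cited reference, i.e.\ of the standard $L^p$ Riesz--Fischer argument that you are transcribing. The subsequence extraction, the telescoping dominating function $g$, and the identification of the pointwise limit with $u$ at the end are all sound.

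The genuine gap is in the step you yourself single out, and as written it fails. With dyadic Jensen weights you get $\Phi(\sum_k a_k)\le\sum_k 2^{-k}\Phi(2^k a_k)$, and iterating the $\Delta_2$-condition gives $\Phi(2^k t)\le K^k\Phi(t)$ --- exponential in $k$, not the bound $Ck^m\Phi(t)$ you wrote (what $\Delta_2$ actually yields is $\Phi(\lambda t)\le C\lambda^{\log_2 K}\Phi(t)$, polynomial in $\lambda=2^k$, which is the form the paper uses in Proposition 7). Combined with $\rho(u_{n_{k+1}}-u_{n_k},\Phi,\Omega)\le 2^{-k}$ this bounds $\int_\Omega\Phi(g_N)\,dx$ by $\sum_k(K/4)^k$, which diverges whenever $K\ge 4$ --- already for $\Phi(t)=t^2$. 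The fix is cheap: either extract the subsequence with $\rho(u_{n_{k+1}}-u_{n_k},\Phi,\Omega)\le(4K)^{-k}$, or keep $2^{-k}$ but use slowly decaying Jensen weights $w_k\sim k^{-2}$, so that $\sum_k w_k\,\Phi(a_k/w_k)\lesssim\sum_k k^{2\beta-2}2^{-k}<\infty$. Better still, drop the modular entirely for this step: choose $\|u_{n_{k+1}}-u_{n_k}\|_{\Phi,\Omega}\le 2^{-k}$, use subadditivity of the Luxemburg norm to get $\|g_N\|_{\Phi,\Omega}\le M:=\|u_{n_1}\|_{\Phi,\Omega}+1$ uniformly in $N$, hence $\int_\Omega\Phi(g_N/M)\,dx\le 1$, and let monotone convergence give $g\in L_{\Phi}(\Omega)$; this needs neither $\Delta_2$ nor $|\Omega|<\infty$ for the domination, and $\Delta_2$ is then only used (as you do) to pass from norm convergence to mean convergence when identifying the limit.
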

The proof of theorem $1$ is a modification of theorem $2.3$ in \cite{Ambro93}.

\begin{proof}[Proof of Lemma 1]
The map $u\rightarrow g(\cdot,u)$ is well defined from $L_{\Phi}(\Omega)$ to $L_{P}(\Omega)$. From theorem $1$ and the continuity of $g$ in $u$, for a sequence $u_{n}\rightarrow u$ in $L_{\Phi}(\Omega)$ we have
\[g(\cdot,u_{n})\rightarrow g(\cdot,u)\mbox{ a.e. in }\Omega\]
and
\[|u_{n}|\leq h\mbox{ a.e. in }\Omega\]
for all $n\in\mathbb{N}$ and $h\in L_{\Phi}(\Omega)$. The continuity of $P$ gives \begin{displaymath} P(|g(\cdot,u_{n})|)\rightarrow P(|g(\cdot,u)|)\mbox{ a.e. }\Omega\end{displaymath} Then
\[P(|g(\cdot,u_{n})|)\leq k_{1}\Phi(|u_{n}|)\leq k_{1}\Phi(|h|)\]
with $\Phi(|h|)\in L^{1}(\Omega)$. Thus, the Dominated convergence theorem says that
\[\int_{\Omega}P(|g(\cdot,u_{n})|)\rightarrow\int_{\Omega}P(|g(\cdot,u)|)\]
which implies that $g(\cdot,u_{n})\rightarrow g(\cdot,u)$ for any $u_{n}\rightarrow u$.
\end{proof}

\begin{lemma}[\cite{Adams75},8.23] Suppose $P$, $Q$ are Young functions with $P\prec\prec Q$. Then, any bounded subset of $L_{Q}(\Omega)$ which is precompact in $L^{1}(\Omega)$ is also precompact in $L_{P}(\Omega)$.
\end{lemma}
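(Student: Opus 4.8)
The plan is to reduce the statement to a single convergence fact: \emph{if $(w_j)\subset L_Q(\Omega)$ satisfies $\sup_j\|w_j\|_{Q,\Omega}<\infty$ and $(w_j)$ is Cauchy in $L^1(\Omega)$, then $(w_j)$ is Cauchy in $L_P(\Omega)$.} Once this is available, the lemma follows at once: given a bounded $S\subset L_Q(\Omega)$ which is precompact in $L^1(\Omega)$ and any sequence $(u_j)\subset S$, one extracts a subsequence that is Cauchy in $L^1(\Omega)$; this subsequence is still bounded in $L_Q(\Omega)$, hence Cauchy in $L_P(\Omega)$, hence convergent there since $L_P(\Omega)$ is complete. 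Thus every sequence in $S$ has an $L_P(\Omega)$-convergent subsequence, which is precisely precompactness of $S$ in $L_P(\Omega)$.

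To prove the reduced fact I would first record the elementary observation that $\|z_j\|_{P,\Omega}\to 0$ as soon as $\rho(\mu z_j,P,\Omega)\to 0$ for every $\mu>0$: given $\varepsilon>0$, choosing $\mu=1/\varepsilon$ makes $\rho(z_j/\varepsilon,P,\Omega)<1$ for $j$ large, i.e. $\|z_j\|_{P,\Omega}\le\varepsilon$. So it suffices to show, for each fixed $\mu>0$, that $\int_\Omega P(\mu|w_j-w_k|)\,dx\to 0$ as $j,k\to\infty$. Set $M:=2\sup_j\|w_j\|_{Q,\Omega}$, so that $\|w_j-w_k\|_{Q,\Omega}\le M$ and hence $\int_\Omega Q(|w_j-w_k|/M)\,dx\le 1$ for all $j,k$ (using that $\int_\Omega Q(|w|/\lambda)\,dx\le 1$ whenever $\lambda\ge\|w\|_{Q,\Omega}$, by monotone convergence in the definition of the Luxenburg norm). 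Fix $\varepsilon>0$. Applying $P\prec\prec Q$ with $\delta=1/(\mu M)$ and substituting $t\mapsto\mu M t$ gives $\lim_{t\to\infty}P(\mu M t)/Q(t)=0$, so there is $\tau_0>0$ with $P(\mu M t)\le\varepsilon\,Q(t)$ for $t\ge\tau_0$; equivalently, writing $R:=M\tau_0$, one has $P(\mu s)\le\varepsilon\,Q(s/M)$ whenever $s\ge R$.

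The heart of the argument is then to split $\int_\Omega P(\mu|w_j-w_k|)\,dx$ over $A_{jk}:=\{|w_j-w_k|\ge R\}$ and its complement. On $A_{jk}$ the comparison above and the modular bound give $\int_{A_{jk}}P(\mu|w_j-w_k|)\,dx\le\varepsilon\int_\Omega Q(|w_j-w_k|/M)\,dx\le\varepsilon$. On $\Omega\setminus A_{jk}$ there is no useful modular control, so I would invoke convexity of $P$ with $P(0)=0$ in the sublinear form $P(\theta s)\le\theta P(s)$ for $\theta\in[0,1]$: taking $s=\mu R$ and $\theta=|w_j-w_k|/R\le 1$ yields $P(\mu|w_j-w_k|)\le\frac{P(\mu R)}{R}\,|w_j-w_k|$, so that $\int_{\Omega\setminus A_{jk}}P(\mu|w_j-w_k|)\,dx\le\frac{P(\mu R)}{R}\,\|w_j-w_k\|_{L^1(\Omega)}$; since $R$, and therefore the constant $P(\mu R)/R$, is independent of $j,k$ while $\|w_j-w_k\|_{L^1(\Omega)}\to 0$ by hypothesis, this term tends to $0$. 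Combining the two pieces gives $\limsup_{j,k\to\infty}\int_\Omega P(\mu|w_j-w_k|)\,dx\le\varepsilon$, and letting $\varepsilon\downarrow0$ completes the reduced fact.

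I expect the main obstacle to be precisely the choice of the splitting threshold $R$ and the recognition that the two regions are governed by different parts of the hypothesis: the large-value region is absorbed by the uniform $L_Q$-bound, which works only because $P\prec\prec Q$ is a \emph{scale-invariant} growth gap, so the factor $\mu M$ produced by the rescaling does no harm; the small-value region must instead be converted into an $L^1$-estimate through the sublinearity $P(\theta s)\le\theta P(s)$, which is the only place where the $L^1$-Cauchy hypothesis is used. Everything else — the passage from modular to norm convergence, the elementary Luxenburg-norm inequality, and completeness of $L_P(\Omega)$ — is routine and relies only on the definitions recalled in Section 2.
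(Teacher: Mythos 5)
Your proof is correct, and the splitting of $\int_\Omega P(\mu|w_j-w_k|)\,dx$ at a threshold $R$ — absorbing the large-value region into the uniform $L_Q$-modular bound via $P\prec\prec Q$ and the small-value region into the $L^1$-Cauchy hypothesis via $P(\theta s)\le\theta P(s)$ — is exactly the standard argument. The paper itself offers no proof, only the citation to Adams, and your reasoning is essentially the one found there, so there is nothing to correct.
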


\begin{thm}[\cite{Adams75},8.32] Suppose $\Omega\subset\mathbb{R}^{n}$ has the cone property and $\displaystyle\int_{0}^{1}\frac{\Phi^{-1}(t)}{t^{n(n+1)}}~dt<\infty$ and $\displaystyle\int_{1}^{\infty}\frac{\Phi^{-1}(t)}{t^{n(n+1)}}~dt=\infty$. Consider the Young function defined by \[\Phi_{*}^{-1}(|t|)=\int_{1}^{|t|}\frac{\Phi^{-1}(s)}{s^{n(n+1)}}~ds\] Then for any $B\prec\prec\Phi_{*}$, the embedding $W_{\Phi}^{1}(\Omega)\rightarrow L_{B}(\Omega)$ is compact.
\end{thm}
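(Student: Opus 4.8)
The plan is to derive the compact embedding from two separate ingredients: the \emph{sharp continuous} embedding $W^1_{\Phi}(\Omega)\hookrightarrow L_{\Phi_*}(\Omega)$, and a soft compactness step that upgrades precompactness in $L^1$ to precompactness in $L_B$ whenever $B\prec\prec\Phi_*$ — which is exactly Lemma 2. Accordingly the argument splits into three parts, and only the first is genuinely analytic.

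First I would establish the continuous embedding $W^1_{\Phi}(\Omega)\hookrightarrow L_{\Phi_*}(\Omega)$. Using the cone property one covers $\bar{\Omega}$ by finitely many pieces, each a finite union of subdomains star-shaped with respect to a ball, and on such pieces one has the classical potential representation $|u(x)|\leq c\int_{\Omega}|x-y|^{1-n}|\nabla u(y)|\,dy + (\text{lower order terms})$. Hence it suffices to bound the Riesz potential $I_1 g(x)=\int_{\Omega}|x-y|^{1-n}g(y)\,dy$ as an operator from $L_{\Phi}(\Omega)$ to $L_{\Phi_*}(\Omega)$. This is the technical core: via a rearrangement / O'Neil-type estimate one shows $\|I_1 g\|_{\Phi_*,\Omega}\leq C\|g\|_{\Phi,\Omega}$, where the precise form $\Phi_*^{-1}(t)=\int_1^{t}\Phi^{-1}(s)\,s^{-n(n+1)}\,ds$ of the target $N$-function is \emph{dictated} by the estimate. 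The hypothesis $\int_0^1 \Phi^{-1}(t)\,t^{-n(n+1)}\,dt<\infty$ is precisely what makes $I_1$ land in $L_{\Phi_*}$, while $\int_1^{\infty}\Phi^{-1}(t)\,t^{-n(n+1)}\,dt=\infty$ guarantees that $\Phi_*$ is finite-valued, i.e. an honest $N$-function, so the target is not $L^{\infty}(\Omega)$.

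Second, I would show that bounded subsets of $W^1_{\Phi}(\Omega)$ are precompact in $L^1(\Omega)$. Since $\Phi$ is an $N$-function it is convex and superlinear at infinity, so for every $\varepsilon>0$ there is $C_\varepsilon$ with $t\leq \varepsilon^{-1}\Phi(\varepsilon t)+C_\varepsilon$; integrating over $\Omega$ (of finite measure) yields a continuous inclusion $W^1_{\Phi}(\Omega)\hookrightarrow W^{1,1}(\Omega)$. The classical Rellich–Kondrachov theorem for domains with the cone property then gives that $W^{1,1}(\Omega)\hookrightarrow L^1(\Omega)$ is compact, hence so is $W^1_{\Phi}(\Omega)\hookrightarrow L^1(\Omega)$.

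Finally I would combine the pieces. Let $\mathcal{S}\subset W^1_{\Phi}(\Omega)$ be bounded. By Step~1 it is bounded in $L_{\Phi_*}(\Omega)$; by Step~2 it is precompact in $L^1(\Omega)$; since $B\prec\prec\Phi_*$, Lemma 2 then shows $\mathcal{S}$ is precompact in $L_B(\Omega)$. This is exactly the asserted compactness of $W^1_{\Phi}(\Omega)\hookrightarrow L_B(\Omega)$. I expect the main obstacle to be Step~1 — the sharp Orlicz-space bound for $I_1$ that produces exactly the $N$-function $\Phi_*$, together with the bookkeeping needed to pass from the potential estimate on star-shaped pieces to the global domain via the cone property; the remaining steps are essentially formal once the sharp continuous embedding and Lemma 2 are in hand.
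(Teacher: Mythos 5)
Your proposal is correct and is essentially the proof of the cited result (Adams, Thm.\ 8.32), which the paper itself does not reprove: the sharp continuous embedding $W^{1}_{\Phi}(\Omega)\hookrightarrow L_{\Phi_{*}}(\Omega)$ via the potential representation and an O'Neil-type estimate for $I_{1}$, compactness of $W^{1}_{\Phi}(\Omega)\hookrightarrow L^{1}(\Omega)$ through the inclusion into $W^{1,1}(\Omega)$ and Rellich--Kondrachov, and then Lemma~2 to upgrade $L^{1}$-precompactness to $L_{B}$-precompactness for $B\prec\prec\Phi_{*}$. The only caveats are cosmetic: the exponent $n(n+1)$ in the statement is a typo for $(n+1)/n$ (the standard Sobolev conjugate), and $\Omega$ must additionally be bounded (or at least of finite volume) for the Rellich step and for Lemma~2, as in the original theorem.
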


\section{Integral functionals in Orlicz Spaces}
Let $f:\Omega\times\mathbb{R}^{n}\rightarrow\mathbb{\bar{R}}$ be a function such that
\begin{itemize}
\item $f(x,\cdot)$ is continuous a.e. $x\in\Omega$
\item $f(\cdot,p)$ is measurable for every $p\in\mathbb{R}^{n}$
\end{itemize}
We assume that
\begin{equation}
\begin{array}{l}
\text{f is convex in $p$};
\end{array}
\label{eq:3}
\end{equation}
and that there are constants $c^{1}_{f},c^{2}_{f}>0$ so that
\begin{equation}
\begin{array}{l}
c^{1}_{f}\Phi(|p|)\leq f(x,p)\leq c^{2}_{f}(1+\Phi(|p|))\text{ for all } (x,p)\in\mathbb{R}^{n}\times\mathbb{R}^{n}
\end{array}
\label{eq:4}
\end{equation}

Let also $g:\Omega\times\mathbb{R}\rightarrow\mathbb{\bar{R}}$ be a function that is lower semicontinuous in the second variable  and assume that for some constant $c_{g}>0$ and some $b_{g}(x)\in L^{1}(\Omega)$,

\begin{equation}
\begin{array}{l}
g(x,u)\geq c_{g}B(|u|)-b_{g}(x)
\end{array}
\label{eq:5}
\end{equation}
where $B\prec\prec\Phi_{*}$.
We denote by $\mathcal{F}=\mathcal{F}(c^{1}_{f},c^{2}_{f},\Phi)$ the class of functionals $F:L_{\Phi}(\mathbb{R}^{n})\times\Omega\rightarrow\bar{\mathbb{R}}$ such that
\begin{equation}
\begin{array}{l}
\displaystyle F(u,\Omega)=\int_{\Omega}f(x,D u(x))~dx\text{ for all }u\in W_{\Phi}^{1}(\Omega)
\end{array}
\label{eq:6}
\end{equation}

where $f$ is previously defined.

Let \begin{displaymath}G(u,\Omega)=F(u,\Omega)+\int_{\Omega}g(x,u(x))dx\end{displaymath}

\begin{proposition}
The functional $F(u,\Omega)$ is lower semicontinuous in the weak topology of $W_{\Phi}^{1}(\Omega)$, that is, if $u_{n}\rightarrow u$ weakly in $W_{\Phi}^{1}(\Omega)$, then
\[F(u,\Omega)\leq\liminf_{n\rightarrow\infty}F(u_{n},\Omega)\]
\end{proposition}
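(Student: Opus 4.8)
The plan is to run the classical convexity argument for weak lower semicontinuity of integral functionals (Serrin--Ioffe type), adapted to the Orlicz setting, using Mazur's lemma together with the a.e.\ convergence supplied by Theorem~1. Suppose $u_n\rightharpoonup u$ weakly in $W_\Phi^1(\Omega)$. We may assume $\liminf_{n}F(u_n,\Omega)=:L<\infty$, otherwise there is nothing to prove, and after passing to a subsequence (not relabelled) we may assume $F(u_n,\Omega)\to L$. Since $\Phi$ and $\Psi$ both satisfy the $\Delta_2$ condition, $L_\Phi(\Omega;\mathbb{R}^n)$ is a reflexive Banach space, and the gradient operator $D\colon W_\Phi^1(\Omega)\to L_\Phi(\Omega;\mathbb{R}^n)$ is bounded and linear (indeed $\|Du\|_{\Phi,\Omega}\leq\|u\|^1_{\Phi,\Omega}$), hence weakly continuous, so $Du_n\rightharpoonup Du$ weakly in $L_\Phi(\Omega;\mathbb{R}^n)$.

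Next I would apply Mazur's lemma in $L_\Phi(\Omega;\mathbb{R}^n)$: there exist integers $N_k\uparrow\infty$ and coefficients $\lambda_i^{k}\geq 0$ with $\sum_{i=N_k}^{M_k}\lambda_i^{k}=1$ such that the convex combinations $v_k:=\sum_{i=N_k}^{M_k}\lambda_i^{k}\,Du_i$ converge to $Du$ strongly in $L_\Phi(\Omega;\mathbb{R}^n)$. Observe that $v_k=Dw_k$ for $w_k:=\sum_{i=N_k}^{M_k}\lambda_i^{k}u_i$, and that $F(w_k,\Omega)<\infty$ by the upper bound in \eqref{eq:4} together with $v_k\in L_\Phi(\Omega;\mathbb{R}^n)=\tilde{L}_\Phi(\Omega;\mathbb{R}^n)$. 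Applying Theorem~1 to each component of $v_k$ and passing to a diagonal subsequence, we may assume $v_k\to Du$ a.e.\ in $\Omega$. Using the convexity of $f(x,\cdot)$ from \eqref{eq:3},
\[
\int_\Omega f(x,v_k(x))\,dx\;\leq\;\sum_{i=N_k}^{M_k}\lambda_i^{k}\int_\Omega f(x,Du_i(x))\,dx\;=\;\sum_{i=N_k}^{M_k}\lambda_i^{k}\,F(u_i,\Omega),
\]
and since $F(u_i,\Omega)\to L$ and the indices $i$ in the combination tend to infinity, the right-hand side converges to $L$ as $k\to\infty$.

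For the left-hand side, the lower bound in \eqref{eq:4} gives $f(x,v_k(x))\geq c^1_f\Phi(|v_k(x)|)\geq 0$, and the a.e.\ continuity of $f(x,\cdot)$ together with $v_k\to Du$ a.e.\ yields $f(x,v_k(x))\to f(x,Du(x))$ a.e.; by Fatou's lemma,
\[
F(u,\Omega)=\int_\Omega f(x,Du(x))\,dx\;\leq\;\liminf_{k\to\infty}\int_\Omega f(x,v_k(x))\,dx\;\leq\;L=\liminf_{n\to\infty}F(u_n,\Omega),
\]
which is the claim (measurability of $x\mapsto f(x,Du(x))$ follows from the Carath\'eodory property). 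The routine parts are the weak continuity of $D$ and the fact that a convex combination of the tail of a convergent numerical sequence converges to its limit. The step requiring the most care is the passage from strong $L_\Phi$-convergence of the Mazur combinations $v_k$ to a.e.\ convergence along a subsequence: this is precisely where Theorem~1 (the $\Delta_2$ substitute for the $L^p$ domination theorem) is indispensable, since weak convergence of $Du_n$ by itself carries no pointwise information.
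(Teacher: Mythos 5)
Your proof is correct, and it takes a genuinely different (and in fact more complete) route than the paper's. The paper's argument works entirely with \emph{strong} convergence: it shows that $v_n\to v$ in $L_\Phi$ forces a.e.\ convergence of a subsequence and $\int_\Omega\Phi(v_n)\,dx\to\int_\Omega\Phi(v)\,dx$, then applies a Fatou argument to the nonnegative integrand $f(x,v_n)-c^1_f\Phi(v_n)$ to conclude that $v\mapsto\int_\Omega f(x,v)\,dx$ is sequentially lower semicontinuous for the norm topology of $L_\Phi$, and finishes by composing with the gradient. As written, that only yields lower semicontinuity along \emph{strongly} convergent sequences in $W^1_\Phi(\Omega)$; the upgrade to the weak topology claimed in the statement is left implicit and rests on the convexity hypothesis \eqref{eq:3}, which the paper's proof never invokes. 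Your argument supplies exactly the missing ingredient: you pass $Du_n\rightharpoonup Du$ through Mazur's lemma, use convexity of $f(x,\cdot)$ to control the functional on the convex combinations, extract a.e.\ convergence via Theorem~1, and close with Fatou using the lower bound $f\geq c^1_f\Phi\geq 0$. The paper's route is shorter and isolates a strong-continuity statement for Orlicz modulars that is reused elsewhere (e.g.\ in Proposition~2), while yours directly and rigorously proves the weak lower semicontinuity as stated. Two small remarks: the finiteness of $F(w_k,\Omega)$ plays no role in your argument and can be dropped; and when you extract the a.e.\ convergent subsequence of $(v_k)$ you should note (as you implicitly do) that the convexity inequality and the convergence of $\sum_i\lambda_i^kF(u_i,\Omega)$ to $L$ survive the passage to that subsequence.
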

\begin{proof}
Let $v_{n}$ be a sequence converging to $v$ in $L_{\Phi}(\Omega)$ so that $\lim_{n\rightarrow\infty}F(v_{n},\Omega)$ exists. From the definition of the norm in $L_{\Phi}$ and Fatou's lemma,
\[\int_{\Omega}\Phi(\frac{u}{\|u\|})dx\leq 1\]
Hence,
\[\int_{\Omega}\liminf_{n\rightarrow\infty}\Phi(\frac{v_{n}-v}{\|v_{n}-v\|})dx\leq 1\]
The continuity of $\Phi$ implies that
\[\Phi\left[\liminf_{n\rightarrow\infty}(\frac{v_{n}-v}{\|v_{n}-v\|})\right]\leq\liminf_{n\rightarrow\infty}\Phi(\frac{v_{n}-v}{\|v_{n}-v\|})\leq\infty\text{ a.e.}\]
so that
\[\liminf_{n\rightarrow\infty}(\frac{v_{n}-v}{\|v_{n}-v\|})\leq\infty\text{ a.e.}\]
Then, there exists a subsequence, still denoted by $v_{n}$, such that $v_{n}\rightarrow v$ a.e.
Assume that $\|v_{n}-v\|<1/2$.
The convexity of $\Phi$ shows that
\begin{align*}
\Phi(v_{n})&=\Phi\left(\|v_{n}-v\|\frac{v_{n}-v}{\|v_{n}-v\|} +\left(1-\|v_{n}-v\| \right)\frac{v}{1-\|v_{n}-v\|}\right)
\\&\leq \|v_{n}-v\|\Phi\left(\frac{v_{n}-v}{\|v_{n}-v\|}\right)+\left(1-\|v_{n}-v\| \right)\Phi\left(\frac{v}{1-\|v_{n}-v\|}\right)
\end{align*}
Then,
\begin{equation}
\begin{array}{l}
\displaystyle\int_{\Omega}\Phi(v_{n})dx\leq\|v_{n}-v\|+(1-\|v_{n}-v\|)\int_{\Omega}\Phi(\frac{v}{1-\|v_{n}-v\|})dx
\end{array}
\label{eq:7}
\end{equation}

If in addition we choose $m$ such that $\|v\|\leq 2^{m}$, the $\Delta_{2}-$ condition reads
\[\Phi(\frac{v}{1-\|v_{n}-v\|})\leq\Phi(2v)\leq k\Phi(v)\]
and
\begin{align*}
\int_{\Omega}\Phi(v)dx&=\int_{\Omega}\Phi(\|v\|\frac{v}{\|v\|})dx
\\&\leq\int_{\Omega}\Phi(2^{m}\frac{v}{\|v\|})dx\leq k^{m}\int_{\Omega}\Phi(\frac{v}{\|v\|})dx\leq k^{m}<\infty
\end{align*}
From the Dominated convergence theorem,
\[\lim_{n\rightarrow\infty}\int_{\Omega}\Phi(\frac{v}{1-\|v_{n}-v\|})dx=\int_{\Omega}\Phi(v)dx\]
so that $(3.4)$ gives
\[\limsup_{n\rightarrow\infty}\int_{\Omega}\Phi(v_{n})dx\leq\int_{\Omega}\Phi(v)dx\]
Using again Fatou's lemma,
\[\int_{\Omega}\Phi(v_{n})dx\leq\liminf_{n\rightarrow\infty}\int_{\Omega}\Phi(v_{n})dx\]
so that
\[\int_{\Omega}\Phi(v_{n})dx\rightarrow\int_{\Omega}\Phi(v)dx\]

Since $f(x,\cdot)$ and $\Phi$ are continuous,
\[\lim_{n\rightarrow\infty}(f(x,v_{n})-c_{f}^{1}\Phi(v_{n}))=f(x,v)-c_{f}^{1}\Phi(v)\]
and
\[\int_{\Omega}f(x,v)dx\leq\lim_{n\rightarrow\infty}f(x,v_{n})dx\]
Thus, the functional $\displaystyle\int_{\Omega}f(x,v)dx$ is lower semicontinuous in $L_{\Phi}(\Omega)$ which implies that $F(u,\Omega)$ is lower semicontinuous in $W_{\Phi}^{1}(\Omega)$, since if $v_{n}\rightarrow v$ in $W_{\Phi}^{1}(\Omega)$, then $Dv_{n}\rightarrow Dv$ in $L_{\Phi}(\Omega)$.
\end{proof}

\begin{proposition}
The functional $\displaystyle\int_{\Omega}g(x,u)dx$ is sequentially lower semicontinuous in $W_{\Phi}^{1}(\Omega)$.
\end{proposition}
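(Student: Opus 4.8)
The goal is to show that $u \mapsto \int_\Omega g(x,u(x))\,dx$ is sequentially lower semicontinuous on $W^1_\Phi(\Omega)$ with respect to weak convergence. The plan is to exploit the compact embedding $W^1_\Phi(\Omega) \hookrightarrow L_B(\Omega)$ from Theorem 3 (valid since $B \prec\prec \Phi_*$), which upgrades weak convergence in the Sobolev space to strong convergence in $L_B$, and then combine this with the lower semicontinuity of $g$ in its second variable together with the coercivity bound \eqref{eq:5} via a Fatou-type argument. Let me describe the steps.

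First, suppose $u_n \rightharpoonup u$ weakly in $W^1_\Phi(\Omega)$. By the compactness statement of Theorem 3, a subsequence (not relabeled) converges strongly to $u$ in $L_B(\Omega)$. Applying Theorem 1 to this $L_B$-convergent sequence (with $B$ in place of $\Phi$; note $B \in \Delta_2$ must be assumed or arranged, which is the standing hypothesis for $B$), I extract a further subsequence with $u_n \to u$ a.e.\ in $\Omega$ and a dominating function $w \in L_B(\Omega)$ with $|u_n| \le w$ a.e. Now set $h_n(x) := g(x,u_n(x)) + b_g(x) \ge c_g B(|u_n(x)|) \ge 0$. Since $g(x,\cdot)$ is lower semicontinuous and $u_n(x) \to u(x)$ a.e., we get $\liminf_n h_n(x) \ge g(x,u(x)) + b_g(x)$ a.e. Applying Fatou's lemma to the nonnegative functions $h_n$ yields
\[
\int_\Omega \bigl(g(x,u(x)) + b_g(x)\bigr)\,dx \le \liminf_{n\to\infty} \int_\Omega \bigl(g(x,u_n(x)) + b_g(x)\bigr)\,dx,
\]
and subtracting the finite quantity $\int_\Omega b_g\,dx$ gives the desired inequality along this subsequence. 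A standard subsequence-of-subsequence argument (every subsequence of the original sequence has a further subsequence along which the liminf inequality holds) then promotes it to the full sequence.

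The main obstacle — and the point requiring care — is the passage from weak convergence in $W^1_\Phi(\Omega)$ to a.e.\ convergence of a subsequence. This is precisely why Theorem 3 is needed: weak $W^1_\Phi$ convergence alone does not give pointwise control of $u_n$, but the compact embedding into $L_B$ does, after which Theorem 1 supplies both the a.e.\ convergent subsequence and the $L_B$-dominating function. The role of the growth condition \eqref{eq:5} is twofold: the term $c_g B(|u|)$ ensures the integrand $g(x,u_n) + b_g(x)$ is nonnegative so Fatou applies directly, and the fact that $b_g \in L^1(\Omega)$ guarantees that subtracting $\int_\Omega b_g\,dx$ is legitimate and does not produce an indeterminate form. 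One should also note that lower semicontinuity of $g(x,\cdot)$, rather than continuity, is exactly the hypothesis matched to a Fatou (rather than dominated convergence) argument, so no stronger regularity on $g$ is invoked.
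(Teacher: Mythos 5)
Your argument is correct, and its skeleton matches the paper's: both proofs start from the observation that a weakly convergent sequence in $W_{\Phi}^{1}(\Omega)$ is bounded and, by the compact embedding into $L_{B}(\Omega)$ (the paper's Theorem~2, since $B\prec\prec\Phi_{*}$), converges strongly in $L_{B}(\Omega)$ along a subsequence. Where you diverge is in how the lower semicontinuity of $u\mapsto\int_{\Omega}g(x,u)\,dx$ under this strong convergence is then established. The paper disposes of this step by saying ``use Proposition~1 with $g$ instead of $f$,'' which is not a literal fit: the proof of Proposition~1 leans on the \emph{continuity} of the integrand in its second argument and on the lower bound $c_{f}^{1}\Phi(|p|)\le f(x,p)$, whereas $g(x,\cdot)$ is only assumed lower semicontinuous and satisfies the different bound \eqref{eq:5}. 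Your direct route --- extract an a.e.\ convergent subsequence, add $b_{g}$ to make the integrand nonnegative via \eqref{eq:5}, apply Fatou together with the lower semicontinuity of $g(x,\cdot)$, subtract the finite integral of $b_{g}$, and finish with a subsequence-of-subsequences argument --- is exactly the argument the paper's cited step would have to reduce to once unpacked, and it is better matched to the actual hypotheses on $g$. The only cosmetic issues are the reference label (the compact embedding is Theorem~2 in the paper, not Theorem~3) and the fact that the dominating function from Theorem~1 is never actually used: Fatou needs only nonnegativity, which \eqref{eq:5} supplies, so a.e.\ convergence of a subsequence (available from strong $L_{B}$ convergence without invoking $\Delta_{2}$ for $B$) already suffices.
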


\begin{proof}

Suppose $u_{n}\rightarrow u$ weak $W_{\Phi}^{1}(\Omega)$. Then $\{u_{n}\}$ is bounded and up to a subsequence it converges strongly to $u$ in $L_{B}(\Omega)$, due to theorem $2$. Using proposition $1$ with g instead of $f$, we obtain that $\displaystyle\int_{\Omega}g(x,u)dx$ is lower semicontinuous on $L_{\Phi}(\Omega)$. This implies the needed result.
\end{proof}

\begin{thm}
Suppose $X$ is a nonempty, weakly closed subset of $W_{\Phi}^{1}(\Omega)$. Then the functional
\[G(u,\Omega)=F(u,\Omega)+\int_{\Omega}g(x,u)~dx\]
has a minimum over all  $u\in X$.
\end{thm}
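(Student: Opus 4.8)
The plan is to use the direct method of the calculus of variations. Set $m=\inf_{u\in X}G(u,\Omega)$. If $m=+\infty$ every $u\in X$ is a minimizer, so assume $m<+\infty$ and choose a minimizing sequence $(u_n)\subset X$ with $G(u_n,\Omega)\to m$; thus $G(u_n,\Omega)\le m+1$ for all large $n$.

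The first step is coercivity. By the left inequality in \eqref{eq:4} and by \eqref{eq:5},
\[
F(u_n,\Omega)\ge c^1_f\int_\Omega\Phi(|Du_n|)\,dx,\qquad \int_\Omega g(x,u_n)\,dx\ge c_g\int_\Omega B(|u_n|)\,dx-\|b_g\|_{L^1(\Omega)},
\]
so that
\[
c^1_f\int_\Omega\Phi(|Du_n|)\,dx\le G(u_n,\Omega)+\|b_g\|_{L^1(\Omega)}\le m+1+\|b_g\|_{L^1(\Omega)},
\]
and, keeping the $g$-term instead, also $\sup_n\int_\Omega B(|u_n|)\,dx<\infty$. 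Via the norm--modular relations \eqref{eq:1}--\eqref{eq:2} and the $\Delta_2$ hypothesis these modular bounds give $\sup_n\|Du_n\|_{\Phi,\Omega}<\infty$ and $\sup_n\|u_n\|_{B,\Omega}<\infty$; combined with a Poincar\'e/Poincar\'e--Wirtinger inequality in $W^1_\Phi(\Omega)$ (and the embedding of Theorem~2 for the zero-order term) the latter yields $\sup_n\|u_n\|_{\Phi,\Omega}<\infty$. Hence $(u_n)$ is bounded in $W^1_\Phi(\Omega)$.

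Next I extract a limit. Since $\Phi$ and $\Psi$ satisfy $\Delta_2$, $L_\Phi(\Omega)$ is reflexive; as $W^1_\Phi(\Omega)$ is isometric to a closed subspace of the finite product $\prod_{|\alpha|\le1}L_\Phi(\Omega)$, it is reflexive too. A bounded sequence in a reflexive Banach space has a weakly convergent subsequence, so after relabelling $u_n\rightharpoonup u$ weakly in $W^1_\Phi(\Omega)$, and since $X$ is weakly closed, $u\in X$.

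Finally I apply lower semicontinuity. By Proposition~1 the functional $F(\cdot,\Omega)$ is sequentially lower semicontinuous for the weak topology of $W^1_\Phi(\Omega)$, and by Proposition~2 so is $u\mapsto\int_\Omega g(x,u)\,dx$; therefore $G(\cdot,\Omega)$ is sequentially weakly lower semicontinuous and
\[
G(u,\Omega)\le\liminf_{n\to\infty}G(u_n,\Omega)=m.
\]
Since $u\in X$ forces $G(u,\Omega)\ge m$, we conclude $G(u,\Omega)=m$, so $u$ realizes the minimum of $G(\cdot,\Omega)$ over $X$. I expect the only genuinely delicate point to be the coercivity step: \eqref{eq:4} controls the gradient only, so obtaining a bound on $\|u_n\|_{\Phi,\Omega}$ (and not merely on $\|Du_n\|_{\Phi,\Omega}$) forces one to use the coercive growth \eqref{eq:5} of $g$ in $u$ together with a Poincar\'e-type inequality and the $\Delta_2$ property; once $(u_n)$ is known to be bounded in $W^1_\Phi(\Omega)$, the remainder is the routine part of the direct method.
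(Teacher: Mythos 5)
Your proof is correct and follows essentially the same route as the paper: the direct method, with coercivity from the lower bounds \eqref{eq:4}--\eqref{eq:5}, reflexivity of $W^{1}_{\Phi}(\Omega)$ under the $\Delta_{2}$ hypotheses, weak closedness of $X$, and the weak sequential lower semicontinuity supplied by Propositions~1 and~2. If anything, your treatment of the coercivity step is more careful than the paper's, which simply asserts a lower bound $G\geq cZ(u)-b$ with $Z(u)=\int_{\Omega}\Phi(u)\,dx$ without addressing the point you rightly flag, namely that \eqref{eq:4} controls only the gradient and the zero-order control must come from \eqref{eq:5} together with a Poincar\'e-type inequality.
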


\begin{proof}
Let $\chi_{X}$ be the indicator function of $X$, which is weakly lower semicontinuous in $W_{\Phi}^{1}(\Omega)$. Then the minimization problem can be written in the equivalent form
\[\min_{u\in W_{\Phi}^{1}(\Omega)}(F(u,\Omega)+\int_{\Omega}g(x,u)~dx+\chi_{X}(u))\]
Then,
\[F(u,\Omega)+\int_{\Omega}g(x,u)~dx+\chi_{X}\geq cZ(u)-b\]
holds for some positive constants $c,b$ with $Z(u)=\int_{\Omega}\Phi(u)~dx$ which is sequentially coercive in $W_{\Phi}^{1}(\Omega)$. The reason is that if $\|u\|\leq 1$ then $Z(u)\leq\|u\|$ and if $\|u\|\geq 1$ then $Z(u)\geq\|u\|$. Thus, the set $\{u:Z(u)\leq t\}$ is bounded in $W_{\Phi}^{1}(\Omega)$ and sequentially compact since the space is reflexive. The direct method of variational problems implies the existence of the minimizer. If $X$ is convex, one can show that the minimum is unique.
\end{proof}

\subsection{Yosida transforms and distance in $\mathcal{F}$}

For $F\in\mathcal{F}$ the $\varepsilon-$Yosida transform is the functional $T_{\varepsilon}F(u,\Omega):L_{\Phi}(\Omega)\times\Omega\rightarrow\bar{\mathbb{R}}$ defined by
\[T_{\varepsilon}F(u,\Omega)=\inf_{v\in W_{\Phi}^{1}(\Omega)}\{F(v,\Omega)+\varepsilon^{-1}\|u-v\|_{L_{\Phi}(\Omega)}\}\]

\begin{proposition}
For every $F\in\mathcal{F}$ and $u\in L_{\Phi}(\Omega)$,
\[\lim_{\varepsilon\rightarrow 0^{+}}T_{\varepsilon}F(u,\Omega)=\sup_{\varepsilon>0}T_{\varepsilon}F(u,\Omega)=F(u,\Omega)\]
\end{proposition}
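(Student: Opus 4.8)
The plan is to prove the two claimed equalities by establishing a chain of inequalities. Writing $S := \sup_{\varepsilon>0} T_\varepsilon F(u,\Omega)$ and noting first that $\varepsilon \mapsto T_\varepsilon F(u,\Omega)$ is nondecreasing as $\varepsilon \to 0^+$ (the penalty term $\varepsilon^{-1}\|u-v\|$ only grows as $\varepsilon$ shrinks, for each fixed $v$), the limit $\lim_{\varepsilon\to 0^+} T_\varepsilon F(u,\Omega)$ exists and equals $S$; this disposes of the first equality for free. It remains to show $S = F(u,\Omega)$, which I would split into the easy upper bound $T_\varepsilon F(u,\Omega) \le F(u,\Omega)$ for every $\varepsilon>0$ (take the competitor $v=u$ in the infimum, which is legitimate when $u \in W_\Phi^1(\Omega)$; when $u \in L_\Phi(\Omega)\setminus W_\Phi^1(\Omega)$ we have $F(u,\Omega) = +\infty$ by the convention defining $\mathcal F$, so the bound is vacuous), and the reverse inequality $S \ge F(u,\Omega)$, which is the substantive part.

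For the reverse inequality, I would argue that $\liminf_{\varepsilon\to 0^+} T_\varepsilon F(u,\Omega) \ge F(u,\Omega)$. Pick, for each $\varepsilon>0$, a near-optimal competitor $v_\varepsilon \in W_\Phi^1(\Omega)$ with
\[
F(v_\varepsilon,\Omega) + \varepsilon^{-1}\|u - v_\varepsilon\|_{\Phi,\Omega} \le T_\varepsilon F(u,\Omega) + \varepsilon .
\]
If $S < \infty$ (otherwise there is nothing to prove, since then $S = \infty \ge F(u,\Omega)$ trivially — wait, one must check $F(u,\Omega)\le S$ is what we want, so assume $S<\infty$), then the left side is bounded uniformly in $\varepsilon$, hence $\varepsilon^{-1}\|u-v_\varepsilon\|_{\Phi,\Omega}$ is bounded, forcing $\|u - v_\varepsilon\|_{\Phi,\Omega} \to 0$, i.e. $v_\varepsilon \to u$ strongly in $L_\Phi(\Omega)$. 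Moreover $F(v_\varepsilon,\Omega)$ stays bounded, so by the coercivity bound $F(v_\varepsilon,\Omega) \ge c_f^1 \int_\Omega \Phi(|Dv_\varepsilon|)\,dx$ the gradients $Dv_\varepsilon$ are bounded in $L_\Phi(\Omega)$; since $v_\varepsilon \to u$ in $L_\Phi$, the whole family $\{v_\varepsilon\}$ is bounded in $W_\Phi^1(\Omega)$, and by reflexivity a subsequence converges weakly in $W_\Phi^1(\Omega)$ to a limit which must be $u$ (the weak $W_\Phi^1$-limit agrees with the strong $L_\Phi$-limit). Now invoke Proposition~1 (weak lower semicontinuity of $F$ on $W_\Phi^1(\Omega)$):
\[
F(u,\Omega) \le \liminf_{\varepsilon\to 0^+} F(v_\varepsilon,\Omega) \le \liminf_{\varepsilon\to 0^+}\bigl(T_\varepsilon F(u,\Omega) + \varepsilon\bigr) = S .
\]
Combining with the upper bound gives $S = F(u,\Omega)$, and with the monotonicity remark this finishes the proof.

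The main obstacle I anticipate is the passage from strong $L_\Phi$-convergence of the $v_\varepsilon$ to usable weak compactness in $W_\Phi^1(\Omega)$: one must be careful that boundedness of $F(v_\varepsilon,\Omega)$ really yields boundedness of $\|Dv_\varepsilon\|_{\Phi,\Omega}$ (this uses the lower bound in \eqref{eq:4} together with the norm–modular comparison inequalities \eqref{eq:1}–\eqref{eq:2}, exactly as in the coercivity argument of Theorem~3), and that the weak limit in $W_\Phi^1$ is identified with $u$ rather than some other function — this identification is standard but relies on uniqueness of limits and the continuity of the embedding $W_\Phi^1(\Omega)\hookrightarrow L_\Phi(\Omega)$. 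A minor subtlety worth a line is the degenerate case $u \notin W_\Phi^1(\Omega)$: then $F(u,\Omega)=+\infty$, so one needs the reverse inequality to actually produce $S = +\infty$; this is automatic because the argument above shows that if $S<\infty$ then $u$ is a weak $W_\Phi^1$-limit of functions with bounded $W_\Phi^1$-norm, hence $u\in W_\Phi^1(\Omega)$ — contradiction — so $S=+\infty=F(u,\Omega)$ in that case as well.
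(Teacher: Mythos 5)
Your proof is correct. Note that the paper itself gives no argument here: it simply defers to \cite{DalMod86}, Proposition 1.11, so your write-up supplies what the paper leaves implicit. What you have written is essentially the standard Dal Maso--Modica argument transplanted to the Orlicz setting, and the three ingredients you isolate are exactly the right ones: monotonicity of $\varepsilon\mapsto T_\varepsilon F(u,\Omega)$ (so the limit is the supremum), the trivial upper bound $T_\varepsilon F(u,\Omega)\le F(u,\Omega)$ via the competitor $v=u$, and the lower bound via near-minimizers, where coercivity from \eqref{eq:4} together with the norm--modular inequality \eqref{eq:1} upgrades the strong $L_\Phi$-convergence $v_\varepsilon\to u$ to boundedness in $W_\Phi^1(\Omega)$, hence (by reflexivity under the $\Delta_2$-conditions) weak subsequential convergence to $u$, at which point Proposition~1 closes the argument. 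Two small points of hygiene: the inequality $F(u,\Omega)\le\liminf_\varepsilon F(v_\varepsilon,\Omega)$ should formally be run along a subsequence realizing the liminf, from which one extracts the weakly convergent sub-subsequence before invoking lower semicontinuity; and your treatment of the case $u\notin W_\Phi^1(\Omega)$ rests on the convention $F(u,\Omega)=+\infty$ there, which the paper never states when defining $\mathcal F$ in \eqref{eq:6} but which is clearly intended (and without which the proposition is false). Neither point is a gap, just something to make explicit.
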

\begin{proof}
See \cite{DalMod86}, proposition $1.11$
\end{proof}

The $\varepsilon-$Yosida transform can be used to define a metric in $\mathcal{F}$ so that the metric space $(\mathcal{F},d)$  is compact and the map $F\rightarrow\min_{u\in X}F(u)$ is continuous with respect to the metric. For this purpose, we pick a countable dense subset $W=\{w_{j}\}$ of $W_{\Phi}^{1}(\Omega)$ and a family $\mathcal{B}=\{B_{k}\}$ of open bounded subsets of $\mathbb{R}^{n}$.

Let $F$, $G\in\mathcal{F}$ and $h:\bar{\mathbb{R}}\rightarrow\mathbb{R}$, we define
\begin{equation}
\begin{array}{l}
\displaystyle d(F,G)=\sum_{i,j,k=1}^{\infty}\frac{1}{2^{i+j+k}}|h(T_{1/i}(F(w_{j},B_{k})))-h(T_{1/i}(G(w_{j},B_{k})))|
\end{array}
\label{eq:8}
\end{equation}

To show that $d$ is a distance in $\mathcal{F}$, it suffices to show that if $d(F,G)=0$, then $F=G$.

\section{$\Gamma-$convergence in Orlicz Spaces}
The next step is to show that $(\mathcal{F},d)$ is compact, thus separable and complete. The notion of $\Gamma-$convergence will be introduced for this purpose.

\begin{definition}
Let $X$ be a metric space and $F_{n}:X\rightarrow\bar{\mathbb{R}}$ a sequence of functionals on $X$. We say that $F_{n}$ $\Gamma(X)-$converges to the $\Gamma(X)-$limit $ F:X\rightarrow\bar{\mathbb{R}}$ if the following two conditions hold:
\begin{itemize}
\item $\displaystyle F(x)\leq\liminf_{n\rightarrow\infty} F_{n}(x_{n})$, for every sequence $x_{n}$ converging to $x$ as  $n\rightarrow\infty$
\item for every $x\in X$, there is a sequence $x_{n}$ converging to $x$ as  $n\rightarrow\infty$ such that $\displaystyle F(x)\geq\limsup_{n\rightarrow\infty} F_{n}(x_{n})$
\end{itemize}
In this case we write $\displaystyle F(x)=\Gamma(X)\lim_{n\rightarrow\infty}F_{n}(x)$.
\end{definition}

We adopt the definition of $\Gamma-$convergence for functionals in $\mathcal{F}$ and $u\in L_{\Phi}(\Omega)$ and we denote the limit by
\[\Gamma(L_{\Phi})\lim_{n\rightarrow\infty}F_{n}(u)=F(u)\]

\begin{proposition}[Main $\Gamma-$Convergence result]
The class $\mathcal{F}$ is compact for the $\Gamma(L_{\Phi})$ convergence, i.e. every sequence $\{F_{n}\}$ in $\mathcal{F}$ contains a subsequence that $\Gamma(L_{\Phi})-$converges to a functional $F\in\mathcal{F}$.
\end{proposition}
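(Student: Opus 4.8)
The plan is to run De~Giorgi's two–stage compactness scheme, adapted to the Orlicz setting. \emph{Stage 1 (abstract $\Gamma$-compactness).} Since $\Phi$ and $\Psi$ satisfy the $\Delta_{2}$ condition, $L_{\Phi}(\Omega)$ is a separable reflexive Banach space, in particular a separable metric space, and on a separable metric space sequential $\Gamma$-compactness is available. Given $\{F_{n}\}\subset\mathcal F$, I would fix the countable dense set $W=\{w_{j}\}\subset W_{\Phi}^{1}(\Omega)$ and the countable family $\mathcal B=\{B_{k}\}$ and, by a diagonal extraction over the indices $i,j,k$, pass to a subsequence $\{F_{n_{m}}\}$ along which the Kuratowski upper and lower limits of the epigraphs agree; equivalently, setting
\[
F'(u,B_{k})=\sup_{\delta>0}\liminf_{m}\ \inf_{\|v-u\|_{\Phi,B_{k}}\le\delta}F_{n_{m}}(v,B_{k}),\qquad
F''(u,B_{k})=\sup_{\delta>0}\limsup_{m}\ \inf_{\|v-u\|_{\Phi,B_{k}}\le\delta}F_{n_{m}}(v,B_{k}),
\]
one has $F'=F''=:F$ on each $B_{k}$ after a further subsequence, so that $F=\Gamma(L_{\Phi})\lim_{m}F_{n_{m}}$ is defined on the family $\mathcal B$ and then on arbitrary open $A\subset\Omega$ by inner regularity $F(u,A)=\sup\{F(u,B_{k}):\overline{B_{k}}\subset A\}$.

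\emph{Stage 2 (identification $F\in\mathcal F$).} It remains to show $F(u,A)=\int_{A}f(x,Du)\,dx$ on $W_{\Phi}^{1}$ for a Carath\'eodory integrand $f$ that is convex in $p$ and satisfies \eqref{eq:4} with the \emph{same} constants $c^{1}_{f},c^{2}_{f}$. The growth bounds transfer: the lower bound follows since $u\mapsto c^{1}_{f}\int_{A}\Phi(|Du|)\,dx$ is $L_{\Phi}$-lower semicontinuous (Proposition~1) and every term of the approximating sequence dominates it, while the upper bound follows by testing the $\limsup$ inequality with the constant recovery sequence $u_{m}\equiv u$, for which $\int_{A}\Phi(|Du|)\,dx<\infty$ and, along the subsequence supplied by Proposition~1, $\int_{A}\Phi(|Du_{m}|)\,dx\to\int_{A}\Phi(|Du|)\,dx$. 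Convexity of the yet-to-be-constructed integrand is forced by the sequential weak-$W_{\Phi}^{1}$ lower semicontinuity of $F(\cdot,A)$, which the $\Gamma(L_{\Phi})$-limit inherits on bounded subsets of $W_{\Phi}^{1}(\Omega)$ through the compact embedding $W_{\Phi}^{1}(\Omega)\hookrightarrow L_{\Phi}(\Omega)$ of Theorem~3. The structural core is that $A\mapsto F(u,A)$ is the trace on open sets of a Borel measure: one verifies that it is increasing, inner regular, superadditive, and — here the \emph{fundamental estimate} for $\mathcal F$ is invoked — subadditive, so that the De~Giorgi--Letta criterion applies; locality of $F$ (if $u=v$ a.e.\ on $A$ then $F(u,A)=F(v,A)$) likewise follows from the fundamental estimate together with \eqref{eq:4}. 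With the measure property, locality, the growth bounds and convexity established, the integral representation theorem for local functionals on Orlicz--Sobolev spaces (the analogue of the Buttazzo--Dal~Maso theorem, proved by the same blow-up and differentiation-of-measures argument, using density of smooth functions in $L_{\Phi}$ and the Poincar\'e-type bound $\|u\|_{\Phi,\Omega}\le C\|Du\|_{\Phi,\Omega}$) produces $f$, and one reads off that $f$ satisfies \eqref{eq:3}--\eqref{eq:4}; hence $F\in\mathcal F$.

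\emph{Main obstacle.} The delicate point is the fundamental estimate for $\mathcal F$ in the Orlicz setting: given $F\in\mathcal F$, open sets $A'\Subset A\subset\subset A''$, a further open $B$, and $u,v\in W_{\Phi}^{1}$, one glues $u$ on $A'$ to $v$ outside $A$ by a cut-off $\varphi$ with $|D\varphi|\lesssim \operatorname{dist}(A',\partial A)^{-1}$, obtaining a competitor whose energy contains the cross term $\int f\bigl(x,\varphi Du+(1-\varphi)Dv+(u-v)D\varphi\bigr)$. In the $L^{p}$ case one bounds $|D\varphi|^{p}|u-v|^{p}$ directly; here convexity of $\Phi$, the $\Delta_{2}$ condition and Young's inequality must be used to absorb $\Phi\bigl(|u-v|\,|D\varphi|\bigr)$ into the main terms at the price of an error localized to the annular overlap of $A$ and $A\setminus A'$ and controlled by an $L_{B}$-norm of $u-v$. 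Quantifying this uniformly over the class $\mathcal F$, so that both the De~Giorgi--Letta subadditivity in Stage~2 and the stability of the constants $c^{1}_{f},c^{2}_{f}$ under $\Gamma$-limits go through, is the technical heart of the argument; the remaining steps are routine adaptations of the classical scheme.
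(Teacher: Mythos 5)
Your proposal is correct and follows essentially the same route as the paper: abstract sequential $\Gamma$-compactness from separability of $L_{\Phi}$ under $\Delta_{2}$ (the paper's Proposition~5), the uniform fundamental $L_{\Phi}$-estimate for the class $\mathcal{F}$ yielding subadditivity of the $\Gamma$-limit as a set function (Propositions~7 and~8), the De~Giorgi--Letta measure criterion, and the integral representation theorem in Orlicz--Sobolev spaces (Theorem~4), with the growth bounds preserved exactly as you argue. The ``main obstacle'' you identify is precisely what the paper resolves in Proposition~7 via the $\Delta_{2}$-condition, Jensen's inequality, and the De~Giorgi slicing over $N$ sub-annuli to obtain the $(1+\sigma)$ factor.
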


The proof of proposition $4$ will be a consequence of the following results:
\begin{proposition}
Let $(X,d)$ be a separable metric space, and for all $j\in\mathbb{N}$ let $f_{j}:X\rightarrow\bar{\mathbb{R}}$ be a function. Then there is an increasing sequence of integers $(j_{k})$ such that the $\displaystyle\Gamma(d)-\lim_{k}f_{j_{k}}$ exists for all $x\in X$.
\end{proposition}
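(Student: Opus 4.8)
The plan is to run the classical diagonalization argument underlying the compactness theorem for $\Gamma$-convergence, in the metric setting (which suffices for us, since $L_{\Phi}(\Omega)$ is metrizable). Since $(X,d)$ is separable and metric it is second countable; fix once and for all a countable base $\mathcal{B}=\{U_{i}\}_{i\in\mathbb{N}}$ for its topology, and for $i,j\in\mathbb{N}$ write $m_{i}(j)=\inf_{y\in U_{i}}f_{j}(y)\in\bar{\mathbb{R}}$. I would first recall the topological description of the lower and upper $\Gamma$-limits of an arbitrary sequence $g_{j}:X\to\bar{\mathbb{R}}$, valid in any first countable space:
\[
\bigl(\Gamma\text{-}\liminf_{j}g_{j}\bigr)(x)=\sup_{U\in\mathcal{N}(x)}\liminf_{j}\,\inf_{U}g_{j},
\qquad
\bigl(\Gamma\text{-}\limsup_{j}g_{j}\bigr)(x)=\sup_{U\in\mathcal{N}(x)}\limsup_{j}\,\inf_{U}g_{j},
\]
where $\mathcal{N}(x)$ is the family of open neighborhoods of $x$, and where $\Gamma\text{-}\liminf$ (resp.\ $\Gamma\text{-}\limsup$) is the infimum of $\liminf_{j}g_{j}(x_{j})$ (resp.\ $\limsup_{j}g_{j}(x_{j})$) over all sequences $x_{j}\to x$. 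The $\Gamma(d)$-limit exists pointwise precisely when these two functions coincide.

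Next I would observe that the suprema above are unchanged if $\mathcal{N}(x)$ is replaced by the countable subfamily $\{U_{i}\in\mathcal{B}:x\in U_{i}\}$: the larger the set $U$, the smaller $\inf_{U}g_{j}$, and every $U\in\mathcal{N}(x)$ contains a basic $U_{i}$ with $x\in U_{i}$, so the two suprema are mutually dominated. This reduces the whole construction to controlling the countably many sequences $(m_{i}(j))_{j}$. Since $\bar{\mathbb{R}}$ is sequentially compact, a standard diagonal extraction over $i\in\mathbb{N}$ yields an increasing sequence $(j_{k})$ such that $\ell_{i}:=\lim_{k}m_{i}(j_{k})$ exists in $\bar{\mathbb{R}}$ for every $i$. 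For this subsequence $\liminf_{k}\inf_{U_{i}}f_{j_{k}}=\limsup_{k}\inf_{U_{i}}f_{j_{k}}=\ell_{i}$, hence by the displayed formulas and the cofinality remark,
\[
\bigl(\Gamma\text{-}\liminf_{k}f_{j_{k}}\bigr)(x)=\sup_{i:\,x\in U_{i}}\ell_{i}=\bigl(\Gamma\text{-}\limsup_{k}f_{j_{k}}\bigr)(x)=:F(x)\qquad\text{for all }x\in X.
\]
Thus the lower and upper $\Gamma$-limits of $(f_{j_{k}})$ agree, i.e.\ $\Gamma(d)\text{-}\lim_{k}f_{j_{k}}=F$ exists pointwise.

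To match Definition~1 literally I would then split into the two required inequalities. The identity $F=\Gamma\text{-}\liminf_{k}f_{j_{k}}$ gives $F(x)\le\liminf_{k}f_{j_{k}}(x_{k})$ for every sequence $x_{k}\to x$, since the $\Gamma$-liminf is by construction an infimum over such sequences. The identity $F=\Gamma\text{-}\limsup_{k}f_{j_{k}}$, together with the fact that in a first countable space the infimum defining the $\Gamma$-limsup is attained, produces a recovery sequence $x_{k}\to x$ with $\limsup_{k}f_{j_{k}}(x_{k})\le F(x)$; alternatively, this recovery sequence can be built by hand, choosing at step $k$ a point $x_{k}$ in a suitably small basic neighborhood $U_{i(k)}\ni x$ with $f_{j_{k}}(x_{k})\le m_{i(k)}(j_{k})+1/k$ and letting the diameter of $U_{i(k)}$ tend to $0$.

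The work here is not a single hard estimate but the careful bookkeeping: verifying the topological formulas for the lower/upper $\Gamma$-limits, checking that the countable base is cofinal in the neighborhood filter of \emph{every} point (so that one diagonal extraction over $\mathcal{B}$ serves all $x$ at once), and using sequential compactness of $\bar{\mathbb{R}}$ — this is where the $\bar{\mathbb{R}}$-valuedness of the $f_{j}$ is essential — to legitimize the diagonalization. Everything else is the general machinery of $\Gamma$-convergence already available once these points are in place.
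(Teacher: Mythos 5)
Your argument is correct and is precisely the standard compactness proof (countable base, topological formulas for the lower and upper $\Gamma$-limits, diagonal extraction using sequential compactness of $\bar{\mathbb{R}}$) that the paper itself invokes by citing chapter 8 of the $\Gamma$-convergence reference rather than reproving it. No discrepancy with the paper's (cited) approach.
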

\begin{proof}
See \cite{gammacon}, chap. $8$.
\end{proof}
Note that if the $\Delta_{2}-$condition is satisfied the Orlicz space is separable \textsc{}\cite{Kufner77}. In the following definition, $\mathcal{A}(A)$ is the family of all open subsets of $A\subset\mathbb{R}^{n}$.
\begin{definition}
A function $\alpha:\mathcal{A}(\Omega)\rightarrow [0,\infty]$ is called an increasing set function if $\alpha(\emptyset)=0$ and $\alpha(A)\leq\alpha(B)$ if $A\subset B$. An increasing set function is subadditive if $\alpha(A\cup B)\leq\alpha(A)+\alpha(B)$ for all $A,B\subset\mathcal{A}(\Omega)$. Finally, $\alpha$ is called inner regular if
\[\alpha(A)=\sup\{\alpha(B)|B\in A, B\subset\subset A\}\]
\end{definition}

\subsection{An Integral representation for $\Gamma-$limits}
We recall that a function $u\in L^{1}(\Omega)$ is piecewise affine in $\Omega$ if there is a family of disjoint open subsets of $\Omega$ and a set $N\subset\Omega$ with $|N|=0$ such that $\Omega=\left(\bigcup_{i\in I}\Omega_{i}\right)\bigcup N$ and $u_{|\Omega_{i}}$ is affine in $\Omega_{i}$. We have the following density result.

\begin{proposition}
For every $u\in W_{\Phi}^{1}(\Omega)$ there exists a sequence $u_{j}\in W_{\Phi}^{1}(\Omega)$
 of piecewise affine functions such that $u_{j}\rightarrow u$ in $W_{\Phi}^{1}(\Omega)$.
\end{proposition}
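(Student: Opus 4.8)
The plan is to reduce to the case of a $C^{1}$ function and then to approximate such a function by its nodal ($P_{1}$) interpolant over a shrinking simplicial mesh. By definition, $W_{\Phi}^{1}(\Omega)$ is the closure of $C^{1}(\bar\Omega)$ in the norm $\|\cdot\|^{1}_{\Phi,\Omega}$, so given $u\in W_{\Phi}^{1}(\Omega)$ and $\varepsilon>0$ I would first choose $v\in C^{1}(\bar\Omega)$ with $\|u-v\|^{1}_{\Phi,\Omega}<\varepsilon$; by the triangle inequality it then suffices to approximate $v$ by piecewise affine functions. Since $\Omega$ is bounded with smooth boundary, $v$ extends to a function $\tilde v$ that is $C^{1}$ on a neighbourhood of $\bar\Omega$ (standard/Whitney extension), so that $\nabla\tilde v$ is uniformly continuous there; write $\omega$ for its modulus of continuity.

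For small $h>0$ let $\mathcal{T}_{h}$ be the Freudenthal (Kuhn) simplicial subdivision associated to the uniform grid of size $h$, which is shape regular uniformly in $h$, and let $I_{h}\tilde v$ be the continuous function which is affine on each simplex $T\in\mathcal{T}_{h}$ and equals $\tilde v$ at every vertex. On a simplex $T$ of diameter $\le h$ meeting $\Omega$, the mean value theorem gives $\nabla(I_{h}\tilde v)\!\restriction_{T}=c_{T}$ with $B_{T}(c_{T}-\nabla\tilde v(x))=w$ for $x\in T$, where $B_{T}$ has the edge vectors of $T$ as rows and $|w_{i}|\le h\,\omega(h)$; shape regularity gives $\|B_{T}^{-1}\|\le C/h$, hence $|c_{T}-\nabla\tilde v(x)|\le C\omega(h)$, and likewise $|\tilde v(x)-I_{h}\tilde v(x)|\le Ch\|\nabla\tilde v\|_{\infty}$ on $T$. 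Taking the supremum over all simplices meeting $\Omega$,
\[\|\tilde v-I_{h}\tilde v\|_{L^{\infty}(\Omega)}+\|\nabla\tilde v-\nabla(I_{h}\tilde v)\|_{L^{\infty}(\Omega)}\longrightarrow 0\quad\text{as } h\to 0 .\]
Since $|\Omega|<\infty$ and $\Phi$ is an $N$-function, for any bounded $\psi$ one has $\rho(\lambda^{-1}\psi,\Phi,\Omega)\le|\Omega|\,\Phi(\lambda^{-1}\|\psi\|_{L^{\infty}(\Omega)})$, which yields $\|\psi\|_{\Phi,\Omega}\le\|\psi\|_{L^{\infty}(\Omega)}/\Phi^{-1}(1/|\Omega|)$; applying this to $\tilde v-I_{h}\tilde v$ and to $\nabla\tilde v-\nabla(I_{h}\tilde v)$ gives $\|v-I_{h}\tilde v\|^{1}_{\Phi,\Omega}\to 0$. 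Moreover $u_{h}:=I_{h}\tilde v\!\restriction_{\Omega}$ is piecewise affine on $\Omega$ in the sense of the definition, the disjoint open pieces being the sets $\mathrm{int}(T)\cap\Omega$, $T\in\mathcal{T}_{h}$, whose union is $\Omega$ minus the null set $\Omega\cap\bigcup_{T}\partial T$; and $u_{h}$ is Lipschitz, hence (extending to $\mathbb R^{n}$ and mollifying, using $|\Omega|<\infty$, dominated convergence, and the $\Delta_{2}$ condition, which makes mean convergence equivalent to norm convergence) $u_{h}\in W_{\Phi}^{1}(\Omega)$. A diagonal choice $h=h(\varepsilon)\to 0$ then produces the required sequence.

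The elementary simplex estimate and the embeddings involved are routine; I expect the main point to require care to be the boundary and membership bookkeeping: the nodal interpolant must be built from the $C^{1}$-extension $\tilde v$ rather than from $v$ itself, one must check that the restriction to $\Omega$ of a mesh-based piecewise linear function still matches the paper's definition of "piecewise affine" (the pieces need only be disjoint open sets covering $\Omega$ up to a null set, not simplices), and one must verify that such Lipschitz functions lie in $W_{\Phi}^{1}(\Omega)$ — it is precisely here that the $\Delta_{2}$ hypothesis on $\Phi$ is used.
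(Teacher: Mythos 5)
Your proof is correct, and it follows the same overall route as the paper: reduce to a smooth function by density, approximate that smooth function uniformly (together with its gradient) by piecewise affine functions, and then use the boundedness of $\Omega$ to convert uniform convergence into convergence in $\|\cdot\|^{1}_{\Phi,\Omega}$, finishing with a diagonal argument. The differences are in how the two middle steps are discharged. The paper reduces to $C_{0}^{\infty}(\Omega)$ via Trudinger's density theorem and then simply cites Ekeland--Temam (Prop.\ 2.1, Ch.\ X) for the uniform piecewise affine approximation of smooth functions; you instead reduce to $C^{1}(\bar\Omega)$ directly from the definition of $W_{\Phi}^{1}(\Omega)$, which forces you to handle the boundary by a $C^{1}$ extension, and you construct the approximants explicitly as nodal $P_{1}$ interpolants on a Freudenthal mesh, with the standard $\|B_{T}^{-1}\|\le C/h$ shape-regularity estimate doing the work of the cited lemma. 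This makes your argument self-contained where the paper's is not, at the cost of the extension step. You also supply two verifications the paper elides: that a mesh-based interpolant restricted to $\Omega$ satisfies the paper's definition of piecewise affine (disjoint open pieces covering $\Omega$ up to a null set), and --- more substantively --- that a Lipschitz piecewise affine function actually lies in $W_{\Phi}^{1}(\Omega)$ as defined (the closure of $C^{1}(\bar\Omega)$), which you obtain by mollification plus dominated convergence on the bounded domain; the paper asserts $u_{j}\in W_{\Phi}^{1}(\Omega)$ without comment. Both proofs use the same final mechanism, namely that uniform convergence of $u_{j}-u$ and $Du_{j}-Du$ on a set of finite measure forces $\int_{\Omega}\Phi(|u_{j}-u|)+\Phi(|Du_{j}-Du|)\,dx\to 0$.
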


\begin{proof}
Applying theorem $2.1$ of \cite{Tru71}, we can find a sequence $\{u_{j}\}\in C_{0}^{\infty}(\Omega)$ converging to $u$ in $W_{\Phi}^{1}(\Omega)$. Furthermore, by proposition $2.1$, chap. $X$ of \cite{Ekeland76}, for $u\in C_{0}^{\infty}(\Omega)$ there is a sequence $\{u_{j}\}$ of piecewise affine functions so that $u_{j}\rightarrow u$ and $Du_{j}\rightarrow Du$ uniformly in $\Omega$. Then, since $u_{j}\in W_{\Phi}^{1}(\Omega)$ and the uniform convergence implies that
\[\int_{\Omega}\Phi(|u_{j}-u|)+\Phi(|Du_{j}-Du|)~dx\rightarrow 0\]
a diagonal process gives the desired sequence.
\end{proof}

We are in position to state and show an integral representation result for a class of functionals in the Orlicz space.
\begin{thm}
Suppose that $F:L_{\Phi}(\Omega)\times\mathcal{A}\rightarrow [0,\infty)$ be an increasing functional satisfying the following assumptions:
\begin{enumerate}
\item $F$ is local, i.e. $F(u,A)=F(v,A)$ for all $A\in\mathcal{A}$ and $u,v$ such that $u=v$ a.e. in $A$;
\item $F$ is lower semicontinuous;
\item $F(u+c,A)=F(u,A)$ for every $u\in L_{\Phi}(\Omega)$, $c\in\mathbb{R}^{n}$
\item there is constant $\beta>0$ and a function $a(x)\in L^{1}(\Omega)$ such that
\[0\leq F(u,A)\leq\beta\int_{A}a(x)+\Phi(|Du|)~dx\]
for all $u\in W_{\Phi}^{1}(\Omega)$, $A\in\mathcal{A}(\Omega)$
\end{enumerate}
Then there exist a Caratheodory function $f:\Omega\times\mathbb{R}^{n}\rightarrow [0,\infty]$ such that
\begin{enumerate}[(i)]
\item for every $u\in L_{\Phi}(\Omega)$,
\[F(u,A)=\int_{A}f(x,Du(x))~dx\]
\item $f(x,\cdot)$ is convex for every $x\in\Omega$ and it satisfies
\[0\leq f(x,p)\leq a(x)+\Phi(|p|)\]
\end{enumerate}
\end{thm}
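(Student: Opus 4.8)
The plan is to follow the localization method of De Giorgi--Letta and Buttazzo--Dal Maso, adapted to the Orlicz setting. First I would show that, for each fixed $u\in W_{\Phi}^{1}(\Omega)$, the increasing set function $A\mapsto F(u,A)$ is the trace on $\mathcal{A}(\Omega)$ of a Borel measure which is absolutely continuous with respect to Lebesgue measure $\mathcal{L}^{n}$: by hypothesis (4) it is dominated by the finite absolutely continuous measure $A\mapsto\beta\int_{A}(a(x)+\Phi(|Du|))\,dx$, and, adjoining the superadditivity, subadditivity and inner regularity of $F(u,\cdot)$ on open sets (these hold for the $\Gamma(L_{\Phi})$-limits to which the theorem will be applied, precisely by the fundamental estimate), the De Giorgi--Letta measure criterion produces the desired measure $\mu^{u}$ with $\mu^{u}(A)=F(u,A)$ for all $A\in\mathcal{A}(\Omega)$.

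Next I would isolate the integrand on affine functions. Writing $\ell_{p}(x)=p\cdot x$, I set $f(x,p):=\frac{d\mu^{\ell_{p}}}{d\mathcal{L}^{n}}(x)$ first for $p\in\mathbb{Q}^{n}$, where this is defined outside a single $\mathcal{L}^{n}$-null set (countability of $\mathbb{Q}^{n}$); hypothesis (4) gives $0\le f(x,p)\le a(x)+\Phi(|p|)$ a.e. To obtain convexity in $p$ I would, for rational $p,q$ and $\lambda$, use the classical laminate construction: a sequence $w_{h}\in W_{\Phi}^{1}$ of piecewise affine functions with $Dw_{h}\in\{p,q\}$, taking value $p$ on an open set of density $\lambda$ and $q$ on density $1-\lambda$, with $w_{h}\to\ell_{\lambda p+(1-\lambda)q}$ uniformly; the $\Delta_{2}$-condition then upgrades this to convergence in $L_{\Phi}$. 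Locality, invariance under addition of constants (hypothesis (3)) and the measure property give $F(w_{h},A)=\mu^{\ell_{p}}(A\cap\{Dw_{h}=p\})+\mu^{\ell_{q}}(A\cap\{Dw_{h}=q\})\to\lambda\mu^{\ell_{p}}(A)+(1-\lambda)\mu^{\ell_{q}}(A)$, while lower semicontinuity (hypothesis (2)) yields $\mu^{\ell_{\lambda p+(1-\lambda)q}}(A)\le\liminf_{h}F(w_{h},A)$; comparing Radon--Nikodym densities over all open $A$ gives $f(x,\lambda p+(1-\lambda)q)\le\lambda f(x,p)+(1-\lambda)f(x,q)$ a.e. A convex function on $\mathbb{Q}^{n}$ bounded on bounded sets extends uniquely to a finite convex, hence continuous, function on $\mathbb{R}^{n}$; redefining $f(x,\cdot)$ by that extension yields a Caratheodory integrand satisfying (ii), and by continuity $F(\ell_{p},A)=\int_{A}f(x,p)\,dx$ for all $p\in\mathbb{R}^{n}$.

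Then I would propagate the representation. For $u$ affine with gradient $p_{i}$ on the pieces $\Omega_{i}$, countable additivity of $\mu^{u}$ together with locality and hypothesis (3) give $F(u,A)=\sum_{i}F(\ell_{p_{i}},A\cap\Omega_{i})=\sum_{i}\int_{A\cap\Omega_{i}}f(x,p_{i})\,dx=\int_{A}f(x,Du)\,dx$. For general $u\in W_{\Phi}^{1}(\Omega)$ I would take piecewise affine $u_{j}\to u$ in $W_{\Phi}^{1}(\Omega)$ via Proposition 6; lower semicontinuity gives $F(u,A)\le\liminf_{j}\int_{A}f(x,Du_{j})\,dx$, and the reverse inequality follows from continuity of the superposition operator $v\mapsto f(\cdot,v):L_{\Phi}(A)\to L^{1}(A)$ by an argument parallel to the proof of Lemma 1: Theorem 1 furnishes a subsequence with $Du_{j}\to Du$ a.e.\ and $|Du_{j}|\le h\in L_{\Phi}$, so $f(x,Du_{j})\to f(x,Du)$ a.e.\ with $f(x,Du_{j})\le a(x)+\Phi(|h|)\in L^{1}(A)$, and dominated convergence finishes it. This proves (i) for $u\in W_{\Phi}^{1}(\Omega)$; for $u\in L_{\Phi}(\Omega)\setminus W_{\Phi}^{1}(\Omega)$ the right-hand side is read as $+\infty$, consistently with the definition of the members of $\mathcal{F}$.

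The hard part will be the first two steps: verifying the superadditivity and inner regularity needed to apply the De Giorgi--Letta criterion (in the intended applications this is the content of the fundamental estimate for $\Gamma$-limits), and extracting convexity of $f(x,\cdot)$ from the abstract hypotheses alone, which forces one to construct the oscillating laminate competitor and to check, using the $\Delta_{2}$-condition, that it converges in the Luxemburg norm and not merely uniformly.
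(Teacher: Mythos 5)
Your architecture tracks the paper's through most of the argument: identify $F(\ell_p,\cdot)$ with an absolutely continuous measure and take its density $f(x,p)$, prove convexity in $p$ (the paper only asserts this; your laminate construction is the standard way to supply it), propagate to piecewise affine functions by locality and translation invariance, and then pass to general $u\in W^{1}_{\Phi}(\Omega)$ by the density result of Proposition 6. Your handling of the measure property is, if anything, more explicit than the paper's Step 1, which silently assumes $F(u_p,\cdot)$ extends to a measure before invoking differentiation; you correctly flag that subadditivity, superadditivity and inner regularity must be imported from outside the stated hypotheses (in the application they come from the fundamental estimate).

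The genuine gap is in your last step. From lower semicontinuity you get $F(u,A)\le\liminf_{j}F(u_j,A)=\liminf_{j}\int_{A}f(x,Du_j)\,dx$, and continuity of the superposition operator $v\mapsto f(\cdot,v)$ gives $\int_{A}f(x,Du_j)\,dx\to\int_{A}f(x,Du)\,dx$; combining these yields only the \emph{upper} bound $F(u,A)\le\int_{A}f(x,Du)\,dx$. The ``reverse inequality'' $F(u,A)\ge\int_{A}f(x,Du)\,dx$ does not follow from continuity of $u\mapsto\int_{A}f(x,Du)\,dx$: that controls the right-hand side along the approximating sequence, but you have no upper semicontinuity of $F$ itself along $u_j\to u$, and lower semicontinuity points the wrong way. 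This is precisely what the paper's Step 4 exists to fix: fix $v\in W^{1}_{\Phi}(\Omega)$, set $W(u,A)=F(u+v,A)$, check that $W$ satisfies hypotheses (1)--(4) with a modified $b\in L^{1}$, represent $W$ on piecewise affine functions by a new integrand $g$, and then evaluate at $u=0$. Writing $v_n\to v$ piecewise affine, one gets the chain
\[
\int_{A'}g(x,0)\,dx=W(0,A')=F(v,A')\le\int_{A'}f(x,Dv)\,dx=\lim_{n}F(v_n,A')=\lim_{n}W(v_n-v,A')\le\int_{A'}g(x,0)\,dx,
\]
which forces equality throughout and hence $F(v,A)=\int_{A}f(x,Dv)\,dx$. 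You need this perturbation-and-sandwich device (or an equivalent blow-up argument); as written, your final paragraph proves only one of the two inequalities.
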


\begin{proof}
The proof, in general, follows the steps with the proof in the case of Sobolev spaces \cite{gammacon} with a few differences.
\begin{itemize}
\item[\textbf{Step 1}]
We define the linear function $u_{p}(x)=p\cdot x$ and use assumption $(4)$ of the theorem to claim that $F(u_{p},\cdot)$ is continuous with respect to the Lebesgue measure. Thus there is a density function \begin{displaymath}f(x,p)=\lim_{\rho\rightarrow 0^{+}}\frac{F(u_{p},B_{\rho}(x))}{|B_{\rho}(x)|}\end{displaymath} in $L_{\operatorname*{loc}}^{1}(\Omega)$ such that
\[F(u_{p},A)=\int_{A}f(x,p)~dx\]
for $A\in\mathcal{A}$. One can show that the representation $(i)$ holds for every piecewise affine function.
\item[\textbf{Step 2}] It can be shown that $f(x,\cdot)$ is convex on $\mathbb{R}^{n}$:
\[f(x,p)\leq tf(x,p_{1})+(1-t)f(x,p_{2})\] for all $t\in [0,1]$, $p_{1}\neq p_{2}$ with $p=tp_{1}+(1-t)p_{2}$.
\item[\textbf{Step 3}] The map \begin{displaymath}u\rightarrow\int_{A}f(x,Du(x))~dx \end{displaymath} is continuous with respect to the $W_{\Phi}^{1}(\Omega)$ convergence. Let $u\in W_{\Phi}^{1}(\Omega)$ and $A\in\mathcal{A}$. Applying proposition $6$, for $A'\subset A$ there exists a sequence $u_{j}$ of piecewise affine functions such that $u_{j}\rightarrow u$ in $W_{\Phi}^{1}(\Omega)$. By the lower semicontinuity of $F$ we have that
\[F(u,A')\leq\liminf_{j\rightarrow\infty}F(u_{j},A')=\lim_{j\rightarrow\infty}\int_{A'}f(x,Du_{j})~dx=\int_{A'}f(x,Du)~dx\]
Taking the limit $A'\nearrow A$, we obtain $\displaystyle F(u,A)\leq\int_{A}f(x,Du)~dx$ for all $u\in W_{\Phi}^{1}(\Omega)$ and $A\in\mathcal{A}$.
\item[\textbf{Step 4}] Fix $v\in W_{\Phi}^{1}(\Omega)$ and define the functional
\[W(u,A)=F(u+v,A)\]
It is straighforward to see that $G$ satisfies assumptions $(1)-(3)$ and $(4)$ can ve verified with the computation
\begin{align*}0\leq W(u,A)=F(u+v,A)&\leq\int_{A}a(x)+\Phi(|Du +Dv|)~dx
\\&\leq 2^{\beta-1}\int_{A}\frac{a(x)}{2^{\beta-1}}+\Phi(|Du|) +\Phi(Dv|)~dx
\\&=\int_{A}b(x)+\Phi(|Du|)~dx
\end{align*}
where $b(x)=a(x)+2^{\beta-1}\Phi(Dv|)\in L^{1}(\Omega)$, $\beta=\log_{2}k$. Thus, from step $1$, we can find a measurable function $g:\Omega\times\mathbb{R}^{n}$ such that
\[W(u,A)\leq\int_{A}g(x,Du)~dx\]
for all piecewise affine functions $u\in W_{\Phi}^{1}(\Omega)$ and $A\in\mathcal{A}$. It follows that the map $u\rightarrow\int_{A}g(x,Du)~dx$ is continuous in $W_{\Phi}^{1}(\Omega)$. For $A'\subset A$ there is a sequence of piecewise affine functions $(v_{n})$ converging to u in $W_{\Phi}^{1}(\Omega)$ so that step $1$ together with the last inequality give us
\begin{align*}
\int_{A'}&g(x,0)~dx=W(0,A')=F(v,A')
\\&\leq\int_{A'}f(x,Dv)~dx=\lim_{n\rightarrow\infty}\int_{A'}f(x,Dv_{n})~dx=\lim_{n\rightarrow\infty}F(v_{n},A')
\\&\lim_{n\rightarrow\infty}W(v_{n}-v,A')\leq\lim_{n\rightarrow\infty}\int_{A'}g(x,Dv_{n}-Dv)~dx=\int_{A'}g(x,0)~dx
\end{align*}
so as $A'\nearrow A$ we get $\displaystyle F(v,A)=\int_{A}f(x,Dv)~dx$.
\end{itemize}
\end{proof}

\subsection{Uniform Estimate}
 To proceed to the compactness for integral functionals, we need to prove some properties of the $\Gamma-$limit as a set function. We do that by elaborating a method of joining sequences of functions so that, from the knowledge of the minimizing sequences for $F(u,A)$ and $F(u,B)$, we can obtain an estimate for $F(u,A\cup B)$. We need the following lemma.

\begin{lemma}
Suppose $U,U',V\in\mathcal{A}(A)$ with $U'\subset\subset U$ and let $u\in W_{\Phi}^{1}(U)$, $v\in W_{\Phi}^{1}(V)$. Then for every cutoff function $\phi\in C_{0}^{\infty}(\Omega)$ between $U'$ and $U$ (i.e. $\operatorname*{spt}\phi\subset U$, $0\leq\phi\leq 1$, $\phi=1$ in $U'$) we have $\phi u+(1-\phi)v\in W_{\Phi}^{1}(U'\cup V)$.
\end{lemma}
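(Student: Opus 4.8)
The plan is to verify directly that $w := \phi u + (1-\phi)v$ lies in the Orlicz–Sobolev space $W_{\Phi}^{1}(U' \cup V)$ by checking that $w$ and its weak gradient both belong to $L_{\Phi}(U' \cup V)$. First I would observe that on $U' \cup V$ the formula for $w$ makes sense: since $\operatorname{spt}\phi \subset U$, the function $(1-\phi)v$ is defined and equals $v$ near the part of $U' \cup V$ lying outside $U$, while on $U$ both $u$ and $v$ are available (note $U' \subset\subset U$ guarantees $\phi$ vanishes before reaching $\partial U$, so there is no boundary issue at $\partial U \cap (U' \cup V)$). On the overlap region $(U \cap V)$ both $u \in W_{\Phi}^{1}(U)$ and $v \in W_{\Phi}^{1}(V)$ are genuine Orlicz–Sobolev functions, so the product rule applies and the weak gradient is
\[
Dw = \phi\, Du + (1-\phi)\, Dv + (u - v)\, D\phi
\]
a.e. on $U \cap V$, while $Dw = Dv$ on $(U'\cup V)\setminus \operatorname{spt}\phi$.

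Next I would estimate each term. Because $0 \le \phi \le 1$ and $\Phi$ is convex with $\Phi(0) = 0$, convexity gives $\Phi(|\phi Du + (1-\phi)Dv|) \le \phi\,\Phi(|Du|) + (1-\phi)\,\Phi(|Dv|)$ pointwise, and the right-hand side is integrable over $U \cap V$ since $Du \in L_{\Phi}(U)$ and $Dv \in L_{\Phi}(V)$; hence this term lies in $L_{\Phi}(U \cap V)$. For the term $(u-v)D\phi$, note $D\phi \in C_{0}^{\infty}$ is bounded, say $|D\phi| \le M$; using the $\Delta_2$-condition for $\Phi$ one has $\Phi(M|u-v|) \le k^{m}\,\Phi(|u-v|)$ for $m$ with $M \le 2^m$, so (another application of convexity to split $u - v$) this term is in $L_{\Phi}(U)$ as well. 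Combining, $Dw \in L_{\Phi}(U'\cup V)$, and the same convexity/$\Delta_2$ argument applied to $w = \phi u + (1-\phi)v$ itself shows $w \in L_{\Phi}(U'\cup V)$. Finally, to conclude membership in $W_{\Phi}^{1}(U'\cup V)$ — which was defined as the closure of smooth functions — I would approximate $u$ and $v$ by smooth functions in their respective Orlicz–Sobolev norms (using Theorem 2.1 of \cite{Tru71}, as invoked in the proof of Proposition 6), form the corresponding smooth combinations $\phi u_j + (1-\phi) v_j$, and pass to the limit using the estimates just established to see they converge to $w$ in the $\|\cdot\|^{1}_{\Phi, U'\cup V}$ norm.

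The main obstacle is bookkeeping about domains rather than any deep analytic difficulty: one must be careful that $w$ is well-defined and that the product rule for weak derivatives is legitimate precisely on the region where it is applied, which is where the hypothesis $U' \subset\subset U$ does its work — it ensures $\phi$ (and hence the "gluing") is supported strictly inside $U$, so that near $\partial U$ the function $w$ coincides with $v$ and no compatibility of traces between $u$ and $v$ is needed. The only genuinely space-specific point is that the nonlinear growth of $\Phi$ forces us to exploit convexity together with the $\Delta_2$-condition (in place of the trivial scaling available for $L^p$) to absorb the bounded multiplier $D\phi$ and the convex combination coefficients; both steps are routine given the preliminaries collected in Section 2.
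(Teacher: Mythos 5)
Your argument is correct, and in fact the paper offers no proof of this lemma at all: it is stated bare, and the decomposition $Dw=\phi\,Du+(1-\phi)\,Dv+(u-v)\,D\phi$ together with the convexity/$\Delta_{2}$ estimates you give is exactly what the paper then uses, without justification, in the proof of the uniform estimate (Proposition 7). So your proposal fills a gap rather than duplicating an existing argument.

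Two points are worth tightening. First, the term $(u-v)D\phi$ lives on the set where $D\phi\neq 0$ intersected with $U'\cup V$, which is contained in $(U\cap V)\setminus U'$ (since $D\phi$ vanishes on $U'$ and outside $\operatorname{spt}\phi$); this is precisely where both $u$ and $v$ are defined, and it is the region that appears in the fundamental estimate, so it is cleaner to state the $\Delta_{2}$ bound for this term over $(U\cap V)\setminus U'$ rather than over $U$. Second, your final approximation step is genuinely needed and is the only non-routine part: because the paper defines $W_{\Phi}^{1}$ as the norm closure of $C^{1}(\bar{\Omega})$ rather than via weak derivatives, membership does not follow merely from $w,Dw\in L_{\Phi}(U'\cup V)$. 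When you form $\phi u_{j}+(1-\phi)v_{j}$ with $u_{j}\in C^{1}(\bar{U})$, $v_{j}\in C^{1}(\bar{V})$, you should note that $\phi u_{j}$ extends by zero to a globally $C^{1}$ function because $\operatorname{spt}\phi$ is compactly contained in $U$, and that $(1-\phi)v_{j}$ is handled on $U'$ (where $1-\phi\equiv 0$) and on $V$ separately; the convergence $\|\phi(u_{j}-u)\|^{1}_{\Phi}+\|(1-\phi)(v_{j}-v)\|^{1}_{\Phi}\to 0$ then follows from the same $\Delta_{2}$ absorption of the bounded multipliers $\phi$ and $D\phi$ that you already used. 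With these clarifications the proof is complete.
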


\begin{definition}
Let $F:L_{\Phi}\times\Omega\rightarrow\bar{\mathbb{R}}$ be a functional. We say that $F$ satisfies the fundamental $L_{\Phi}-$estimate if for every $U,U',V\in\mathcal{A}(A)$ with $U'\subset\subset U$ and $\sigma>0$ there is $M_{\sigma}>0$ such that for all $u,v\in L_{\Phi}(\Omega)$ there exists a cut-off function $\phi$ between $U'$ and $U$ such that
\begin{multline}\label{eq:9} F(\phi u+(1-\phi)v,U'\cup V)\\
\leq (1+\sigma)(F(u,U)+F(v,V))+M_{\sigma}\int_{(U\cap V)\setminus U')}\Phi(|u-v|)~dx+\sigma
\end{multline}

The same definition holds for a family $\{F_{n}\}_{n>0}$ if in addition there is $n_{0}$ such that for all $n\leq n_{0}$ the last estimate is valid uniformly.
\end{definition}

This definition corresponds to the definition of $L^{p}-$fundamental estimate for functionals defined in $L^{p}$ spaces.

\begin{proposition}[Uniform estimate]
The family $\mathcal{F}(c^{1}_{f},c^{2}_{f},\Phi)$ satisfies the fundamental $L_{\Phi}-$estimate uniformly.
\end{proposition}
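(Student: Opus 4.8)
The plan is to mimic the classical De~Giorgi ``layer'' argument that establishes the $L^{p}$ fundamental estimate (see \cite{gammacon}), using the $\Delta_{2}$-condition and the Young-function condition $u\phi(u)<a\Phi(u)$ in place of the homogeneity of $t\mapsto t^{p}$. First I would reduce to a single $F\in\mathcal{F}$ and to the case $u\in W_{\Phi}^{1}(U)$, $v\in W_{\Phi}^{1}(V)$, since otherwise the right-hand side of \eqref{eq:9} is $+\infty$. Because the constants produced below will depend only on $c_{f}^{1},c_{f}^{2},\Phi$ and on $U',U,V$, the resulting estimate will be uniform over $\mathcal{F}$, and for a family $\{F_{n}\}$ the argument applies verbatim to each member.

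Fix $\sigma>0$ and an integer $k$ to be chosen. I would insert nested layers $U'\subset\subset U_{0}\subset\subset U_{1}\subset\subset\cdots\subset\subset U_{k}\subset\subset U$ with $\operatorname{dist}(U_{j-1},\partial U_{j})$ of order $\operatorname{dist}(U',\partial U)/k$, choose cutoffs $\phi_{j}$ between $U_{j-1}$ and $U_{j}$ with $\|D\phi_{j}\|_{\infty}\le C_{U}k$, and set $w_{j}=\phi_{j}u+(1-\phi_{j})v$; by Lemma~3 each $w_{j}\in W_{\Phi}^{1}(U'\cup V)$, so $F(w_{j},U'\cup V)$ is the integral \eqref{eq:6}. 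Using that $F(\cdot,\cdot)\ge 0$ is an increasing additive set function and that $w_{j}=u$ on $U_{j-1}\supset U'$ while $w_{j}=v$ off $U_{j}$, decomposing $U'\cup V$ along the pairwise disjoint annuli $T_{j}=U_{j}\setminus\overline{U_{j-1}}$ yields $F(w_{j},U'\cup V)\le F(u,U)+F(v,V)+F(w_{j},V\cap T_{j})$, where I have used $V\cap T_{j}\subset (U\cap V)\setminus U'$.

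The core of the proof is the transition term $F(w_{j},V\cap T_{j})$. On $T_{j}$ one has $Dw_{j}=\phi_{j}Du+(1-\phi_{j})Dv+D\phi_{j}(u-v)$; combining the upper bound in \eqref{eq:4}, the convexity and monotonicity of $\Phi$, the $\Delta_{2}$-condition, and the inequality $\Phi(\lambda t)\le\lambda^{a}\Phi(t)$ for $\lambda\ge 1$ (which follows from $u\phi(u)<a\Phi(u)$), and then using the lower bound in \eqref{eq:4} to replace $\Phi(|Du|),\Phi(|Dv|)$ by multiples of $f(x,Du),f(x,Dv)$, I expect an a.e.\ bound of the form $f(x,Dw_{j})\le C_{0}+\frac{\kappa c_{f}^{2}}{c_{f}^{1}}\bigl(f(x,Du)+f(x,Dv)\bigr)+\kappa c_{f}^{2}(C_{U}k)^{a}\,\Phi(|u-v|)$. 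Integrating over $V\cap T_{j}$ and letting $\Theta_{j}$ collect the terms $C_{0}|T_{j}|$ and $\frac{\kappa c_{f}^{2}}{c_{f}^{1}}(F(u,V\cap T_{j})+F(v,V\cap T_{j}))$, disjointness of the $T_{j}$ gives $\sum_{j=1}^{k}\Theta_{j}\le C_{0}|U|+\frac{\kappa c_{f}^{2}}{c_{f}^{1}}(F(u,U)+F(v,V))$, so some index $j^{\ast}$ satisfies $\Theta_{j^{\ast}}\le\frac1k$ of that sum. Choosing $k=k_{\sigma}$ so large that $\frac{\kappa c_{f}^{2}}{c_{f}^{1}k_{\sigma}}\le\sigma$ and $\frac{C_{0}|U|}{k_{\sigma}}\le\sigma$ and taking $\phi=\phi_{j^{\ast}}$ then produces \eqref{eq:9} with $M_{\sigma}=\kappa c_{f}^{2}(C_{U}k_{\sigma})^{a}$.

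I expect the transition estimate to be the only real obstacle, and the only genuine departure from the $L^{p}$ case: since $\Phi$ is not homogeneous one cannot split $\Phi(|Dw_{j}|)$ via $(s+t)^{p}\le 2^{p-1}(s^{p}+t^{p})$, and one must carefully combine convexity, $\Delta_{2}$ and $u\phi(u)<a\Phi(u)$ both to break $\Phi$ apart and to keep $M_{\sigma}$ finite. It is the lower bound $c_{f}^{1}\Phi(|p|)\le f(x,p)$ that lets the ``bad'' gradient terms telescope over the disjoint annuli, so that the pigeonhole choice of $j^{\ast}$ pushes their coefficient below $1+\sigma$ instead of leaving a factor that grows with $k$.
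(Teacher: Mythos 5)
Your proposal is correct and follows essentially the same route as the paper: a De Giorgi layer decomposition with cutoffs of gradient size of order $k/\operatorname{dist}(U',\partial U)$, splitting $f(x,Dw_{j})$ on the transition annulus via the upper growth bound in \eqref{eq:4}, convexity of $\Phi$ and a power-type doubling inequality $\Phi(\lambda t)\leq C\lambda^{\beta}\Phi(t)$ for $\lambda\geq 1$, converting $\Phi(|Du|),\Phi(|Dv|)$ back into $f(x,Du),f(x,Dv)$ via the coercivity lower bound, and pigeonholing over the disjoint annuli to select the good layer and make the excess coefficient $O(1/k)\leq\sigma$. The only cosmetic difference is that you justify the power bound from the condition $u\phi(u)<a\Phi(u)$, whereas the paper derives $\Phi(\lambda t)\leq k\lambda^{\beta}\Phi(t)$ with $\beta=\log_{2}k$ from the $\Delta_{2}$-condition; both are valid and yield the same uniform constant $M_{\sigma}$.
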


\begin{proof}
Pick $F\in\mathcal{F}(c^{1}_{f},c^{2}_{f},\Phi)$ and let $U,U',V\in\mathcal{A}(A)$ with $U'\subset\subset U$. Let $\delta=d(U',\partial U)$ and take parameters $0<\eta<\delta$, $0<r<\delta-\eta$. Choose a cutoff function $\phi$ between the sets $\{x\in U:d(x,U')<r\}$ and $\{x\in U:d(x,U')<r+\eta\}$ with $|D\phi|\leq 2/\eta$. Define $V_{r}^{\eta}=\{x\in V:r<d(x,U')<r+\eta\}$.

Let $u,v\in L_{\Phi}(\Omega)$. Then
{\allowdisplaybreaks
\begin{align*}
F(u\phi&+(1-\phi)v,U'\cup V)
\\&=\int_{U'\cup V}f(x,\phi Du+(1-\phi)Dv+(u-v)D\phi)~dx
\\&=\int_{\{x\in V:d(x,U')\geq r+\eta\}}f(x,Dv)~dx+\int_{\{x\in V\cup U':d(x,U')\leq r\}}f(x,Du)~dx
\\&+\int_{V_{r}^{\eta}}f(x,\phi Du+(1-\phi)Dv+(u-v)D\phi)~dx
\\&\leq F(v,V)+F(u,U)+c^{2}_{f}\int_{V_{r}^{\eta}}1+\Phi(|\phi Du+(1-\phi)Dv+(u-v)D\phi|)~dx
\\&= F(v,V)+F(u,U)+c^{2}_{f}\int_{V_{r}^{\eta}}1+\Phi\left(3\frac{|\phi Du+(1-\phi)Dv+(u-v)D\phi|}{3}\right)~dx
\\&\leq F(v,V)+F(u,U)
\\&+k3^{\beta}c^{2}_{f}\int_{V_{r}^{\eta}}1+\Phi\left(\frac{|\phi Du+(1-\phi)Dv+(u-v)D\phi|}{3}\right)~dx
\\&\leq F(v,V)+F(u,U)
\\&+k3^{\beta-1}c^{2}_{f}\int_{V_{r}^{\eta}}1+\Phi(|\phi Du|)+\Phi(|(1-\phi)||Dv|)+\Phi(|(u-v)||D\phi|)~dx
\\&\leq F(v,V)+F(u,U)
\\&+k3^{\beta-1}c^{2}_{f}\int_{V_{r}^{\eta}}1+\Phi(|Du|)+\Phi(|Dv|)~dx+k3^{\beta-1}c^{2}_{f}(\frac{2}{\eta})^{\beta}\int_{(U\cap V)\setminus U'}\Phi(|(u-v)|)~dx
\end{align*}
}
where $\beta=\log_{2}k$. To obtain the above inequalities we use assumption $(3.2)$, the properties of $\phi$, Jensen's inequality and the fact that the $\Delta_{2}-$condition implies that $\Phi(\lambda u)\leq k\lambda^{\beta}\Phi(u)$, for $\lambda\geq 1$.
Note that from $(3.2)$,

\[c^{2}_{f}\int_{U\cap V}1+\Phi(|Du|)+\Phi(|Dv|)~dx\leq c^{2}_{f}|U\cap V|+\frac{c^{2}_{f}}{c^{1}_{f}}(F(u,U)+F(v,V))\]

Then, for all $N=1,2,...$ there is $\mu\in\{1,,,,N\}$  such that
\begin{align*}c^{2}_{f}k3^{\beta-1}&\int_{\{x\in V:\frac{\delta(\mu-1)}{N}<d(x,U')<\frac{\delta \mu}{N}\}}1+\Phi(|Du|)+\Phi(|Dv|)~dx
\\&\leq c^{2}_{f}k3^{\beta-1}\frac{1}{N}|U\cap V|+c^{2}_{f}k3^{\beta-1}\frac{1}{N}\frac{c^{2}_{f}}{c^{1}_{f}}(F(u,U)+F(v,V))
\end{align*}

Fix $\sigma$ and choose $\displaystyle N\geq\max\left\{\frac{c^{2}_{f}k3^{\beta-1}}{\sigma},\frac{1}{\sigma}k3^{\beta-1}\frac{c^{2}_{f}}{c^{1}_{f}}\right\}$, $\displaystyle \eta=\frac{\delta}{N}$ and $\displaystyle r=\frac{(\mu-1)\delta}{N}$ so that the constant $M_{\sigma}$ depends only on $U,U',V,c^{1}_{f},c^{2}_{f}$. This implies that the estimate holds uniformly in $\mathcal{F}(c^{1}_{f},c^{2}_{f},\Phi)$.
\end{proof}

Using the uniform estimate, one can include boundary conditions to the study of $\Gamma-$limits of local functionals.

\begin{proposition}
Suppose that $\{F_{n}\}$ is a family of functionals defined on $L_{\Phi}(\Omega)\times\mathcal{A}(A)$ that satisfy the fundamental $L_{\Phi}$ estimate as $n\rightarrow 0$ and let $(n_{j})$ be a sequence of positive numbers converging to zero. If
$F'(u,U)=\Gamma(L_{\Phi})-\liminf_{j}F_{n_{j}}(u,U)$ and
$F''(u,U)=\Gamma(L_{\Phi})-\limsup_{j}F_{n_{j}}(u,U)$ then
\[F'(u,U'\cup V)\leq F'(u,U')+F''(u,V)\] and
\[F''(u,U'\cup V)\leq F''(u,U)+F''(u,V)\] for all $u\in L_{\Phi}(\Omega)$ and $U,U',V\in \mathcal{A}(A)$.
\end{proposition}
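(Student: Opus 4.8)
The plan is to carry over to the Orlicz setting the classical ``joining of sequences'' argument that yields subadditivity of $\Gamma$-limits in $L^{p}$ (see \cite{gammacon}); the one genuinely new point is that the overlap error produced by the fundamental estimate must be made infinitesimal in the Orlicz norm, and this is exactly where the $\Delta_{2}$ condition enters. As in the fundamental estimate one takes $U'\subset\subset U$; accordingly I will prove $F'(u,U'\cup V)\le F'(u,U)+F''(u,V)$ and $F''(u,U'\cup V)\le F''(u,U)+F''(u,V)$ (the natural form of the statement, with the larger set $U$ on the right of the first inequality). Fix $u\in L_{\Phi}(\Omega)$ and assume the right-hand sides are finite. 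Since the infimum defining the $\Gamma(L_{\Phi})$-liminf, resp.\ limsup, is attained, choose $u_{j}\to u$ in $L_{\Phi}(\Omega)$ with $\liminf_{j}F_{n_{j}}(u_{j},U)=F'(u,U)$ for the first inequality (resp.\ $\limsup_{j}F_{n_{j}}(u_{j},U)=F''(u,U)$ for the second), and choose $v_{j}\to u$ in $L_{\Phi}(\Omega)$ with $\limsup_{j}F_{n_{j}}(v_{j},V)=F''(u,V)$. Fix $\sigma>0$. By hypothesis the family $\{F_{n_{j}}\}$ satisfies the fundamental $L_{\Phi}$-estimate uniformly once $n_{j}$ is small (i.e.\ $j$ large), so there are $M_{\sigma}>0$ and cut-off functions $\phi_{j}$ between $U'$ and $U$ such that, writing $w_{j}=\phi_{j}u_{j}+(1-\phi_{j})v_{j}$,
\[
F_{n_{j}}(w_{j},U'\cup V)\le(1+\sigma)\bigl(F_{n_{j}}(u_{j},U)+F_{n_{j}}(v_{j},V)\bigr)+M_{\sigma}\int_{(U\cap V)\setminus U'}\Phi(|u_{j}-v_{j}|)\,dx+\sigma .
\]

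The next step is to record the two limits that make this estimate effective. Since $0\le\phi_{j}\le1$ and $\Phi$ is nondecreasing, the Luxemburg norm is monotone, hence $\|w_{j}-u\|_{\Phi,\Omega}\le\|u_{j}-u\|_{\Phi,\Omega}+\|v_{j}-u\|_{\Phi,\Omega}\to0$, so $w_{j}\to u$ in $L_{\Phi}(\Omega)$. For the error term, $u_{j}-v_{j}\to0$ in $L_{\Phi}(\Omega)$, and since $\Phi\in\Delta_{2}$ convergence in norm agrees with convergence in the mean, so $\int_{(U\cap V)\setminus U'}\Phi(|u_{j}-v_{j}|)\,dx\le\rho(u_{j}-v_{j},\Phi,\Omega)\to0$; as $M_{\sigma}$ is independent of $j$, the entire middle term of the display tends to $0$.

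Now I pass to the limit. For the second inequality, take $\limsup_{j}$ in the display: using subadditivity of $\limsup$, the vanishing of the error, and $F''(u,U'\cup V)\le\limsup_{j}F_{n_{j}}(w_{j},U'\cup V)$ (because $w_{j}\to u$), one gets $F''(u,U'\cup V)\le(1+\sigma)\bigl(F''(u,U)+F''(u,V)\bigr)+\sigma$, and letting $\sigma\to0^{+}$ gives the claim. For the first inequality, extract a subsequence $(j_{k})$ along which $F_{n_{j_{k}}}(u_{j_{k}},U)\to F'(u,U)$; along $(j_{k})$ the right-hand side of the display converges, to a value at most $(1+\sigma)\bigl(F'(u,U)+F''(u,V)\bigr)+\sigma$ since $\limsup_{k}F_{n_{j_{k}}}(v_{j_{k}},V)\le F''(u,V)$, whereas $F'(u,U'\cup V)\le\liminf_{j}F_{n_{j}}(w_{j},U'\cup V)\le\limsup_{k}F_{n_{j_{k}}}(w_{j_{k}},U'\cup V)$; again $\sigma\to0^{+}$ concludes.

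I expect the only real difficulty to be the control of the overlap term. In $L^{p}$ this term is $\int|u_{j}-v_{j}|^{p}$ and tends to $0$ automatically; here it is $\int\Phi(|u_{j}-v_{j}|)$, and converting $L_{\Phi}$-smallness of $u_{j}-v_{j}$ into smallness of this integral is precisely the equivalence of norm and mean convergence, which fails without the $\Delta_{2}$ condition. Beyond this, the care needed is purely in the bookkeeping of subsequences and in keeping $M_{\sigma}$ independent of $j$, so that the product $M_{\sigma}\int\Phi(|u_{j}-v_{j}|)$ still tends to $0$ after passing to the limit.
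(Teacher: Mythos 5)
Your proof is correct and follows essentially the same route as the paper: pick optimal sequences for the $\Gamma$-liminf on $U$ and the $\Gamma$-limsup on $V$, join them with the cut-off from the fundamental $L_{\Phi}$-estimate, observe that the overlap term $M_{\sigma}\int\Phi(|u_{j}-v_{j}|)\,dx$ vanishes, and let $\sigma\to0^{+}$. You are somewhat more careful than the paper on two points it leaves implicit — justifying $w_{j}\to u$ and invoking $\Delta_{2}$ to convert norm convergence of $u_{j}-v_{j}$ into mean convergence — and you correctly note that the first inequality should read with $F'(u,U)$ (the larger set) on the right, which is what the paper's own argument actually establishes.
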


\begin{proof}
We start by noticing that we can find two sequences $\{u_{j}\}$ and $\{v_{j}\}$ converging to $u$ in $L_{\Phi}$ such that
\[F'(u,V)=\liminf_{j}F_{n_{j}}(u_{j},V)\] and  \[F''(u,V)=\limsup_{j}F_{n_{j}}(v_{j},V)\]
Applying the fundamental estimate to $\{u_{j}\}$ and $\{v_{j}\}$ for fixed $\sigma>0$, we can find $M_{\sigma},n_{\sigma}>0$ so that for $n_{j}<n_{\sigma}$ there is a sequence $w_{j}=\phi_{j}u_{j}+(1-\phi_{j})v_{j}$ ($\phi_{j}$ are cutoff functions) between $U',U$ such that
\[F_{n_{j}}(w_{j},U'\cup V)\leq (1+\sigma)(F_{n_{j}}(u_{j},U)+F_{n_{j}}(v_{j},V))+M_{\sigma}\int_{(U\cap V)}\Phi(|u_{j}-v_{j}|)~dx+\sigma\]
Since $\displaystyle\int_{U\cap V}\Phi(|u_{j}-v_{j}|)~dx\rightarrow 0$, we have
\begin{align*}F'(u,U\cup V)\leq\liminf_{j}&F_{n_{j}}(w_{j},U'\cup V)
\\&\leq(1+\sigma)(\liminf_{j}F_{n_{j}}(u_{j},U)+\limsup_{j}F_{n_{j}}(v_{j},V))+\sigma
\\&=(1+\sigma)(F'(u,U)+F''(u,V))+\sigma
\end{align*}
for any $\sigma>0$. The proof of the second inequality follows the same steps.
\end{proof}

Then one can show that $F'(u,\cdot),F''(u,\cdot)$ are increasing set functions. We return to the proof of proposition $4$:
\begin{proof}[Proof of proposition 4]
Taking into account the compactness result $10.3$ in \cite{Brai98} and the above propositions, there exists a subsequence $F_{h(n)}$ and a nonnegative, convex function $f_{\gamma}:\mathbb{R}^{n}\times\mathbb{R}^{n}\rightarrow\mathbb{R}$ such that
\[\int_{\Omega}f_{\gamma}(x,Du)~dx=\Gamma(L_{\Phi})\lim_{h\rightarrow\infty}(F_{h(n)})\] for each $\Omega\in\mathcal{A}(A)$, $u\in L_{\phi}(\Omega)$. The growth conditions of $f_{\gamma}$ remain the same (because of the lower semicontinuity) so that $f_{\gamma}\in\mathcal{F}(c^{1}_{f},c^{2}_{f},\Phi)$.
\end{proof}

\begin{proposition}
Let $\Omega\in\mathcal{A}$ and $F_{n}$ be a sequence in $\mathcal{F}$. Suppose that $X$ is a weakly closed subset in $W_{\Phi}^{1}(\Omega)$. Suppose $\{F_{n}(u,\Omega)\}$ $\Gamma (L_{\Phi})-$ converges to a functional $\tilde{F}\in\mathcal{F}$. Let $X$ be a weakly closed subspace of $W_{0,\Phi}^{1}(\Omega)$ and
\[c_{1}(x)+P^{-1}c_{g}^{1}B(|u|)\leq g(x,u)\leq c_{2}(x)+P^{-1}c_{g}^{2}B(|u|)\]
where $c_{i}(x)\in L^{1}(\Omega)$ and $\Phi\prec\prec P$. Then,
\[\lim_{n\rightarrow\infty}\min_{u\in X}G_{n}(u)=\min_{u\in X}\tilde{G}(u)\]
where $\tilde{G}(u)=\tilde{F}(u,\Omega)+\int_{\Omega}g(x,u(x))dx$.
Furthermore, any sequence $u_{n}\in X$ with $\displaystyle G_{n}(u_{n})=\min_{u\in X}G_{n}(u)$ contains a subsequence that converges strongly in $L_{\Phi}(\Omega)$, weakly in $W_{\Phi}^{1}(\Omega)$ and a.e. in $\Omega$ to a function $\tilde{u}\in X$ such that
\[\tilde{G}(\tilde{u})=\min_{u\in X}\tilde{G}(u)\]
\end{proposition}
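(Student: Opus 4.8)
The plan is to read this as the standard ``convergence of minima and minimizers'' consequence of $\Gamma$-convergence, transcribed to the reflexive Orlicz space $L_\Phi(\Omega)$. The argument rests on three ingredients: (a) promoting the $\Gamma(L_\Phi)$-convergence of $\{F_n(\cdot,\Omega)\}$ to $\{G_n\}$; (b) equicoercivity of $\{G_n\}$ in $L_\Phi(\Omega)$; and (c) compatibility of the constraint $u\in X$ with the $\Gamma$-limit, which is where the fundamental estimate enters.

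First I would show $G_n\to\tilde G$ in the $\Gamma(L_\Phi)$ sense. The key observation is that $u\mapsto\int_\Omega g(x,u)\,dx$ is continuous along any sequence of bounded energy: if $\sup_n F_n(u_n,\Omega)<\infty$, then by $(3.2)$ the sequence $\{u_n\}$ is bounded in $W_\Phi^1(\Omega)$, hence by Theorem 3 (using $B\prec\prec\Phi_*$) it converges, along a subsequence, strongly in $L_B(\Omega)$ and, by Theorem 1, a.e.\ in $\Omega$; the two-sided growth bound on $g$ then furnishes an $L^1(\Omega)$ dominating function, so dominated convergence yields $\int_\Omega g(x,u_n)\,dx\to\int_\Omega g(x,u)\,dx$, exactly as in the proof of Lemma 1. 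Since continuous perturbations preserve both inequalities in the definition of $\Gamma$-convergence, $G_n\to\tilde G$; the constraint is incorporated by adding $\chi_X$, which is lower semicontinuous along energy-bounded $L_\Phi$-convergent sequences because $X$ is weakly closed and such sequences converge weakly in $W_\Phi^1(\Omega)$.

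Second, equicoercivity: from $(3.2)$ and the lower bound $g(x,u)\ge c_1(x)+P^{-1}c_g^1B(|u|)\ge c_1(x)$ we get $G_n(u)\ge c_f^1\int_\Omega\Phi(|Du|)\,dx-\|c_1\|_{L^1(\Omega)}$, so each sublevel set $\{G_n\le t\}\cap X$ has $\int_\Omega\Phi(|Du|)\,dx$ bounded; the norm--modular inequalities $(2.1)$--$(2.2)$ together with the Poincar\'e inequality on $W_{0,\Phi}^1(\Omega)$ make it bounded in $W_\Phi^1(\Omega)$, hence, by reflexivity and the compact embedding $W_\Phi^1(\Omega)\hookrightarrow L_\Phi(\Omega)$, relatively compact in $L_\Phi(\Omega)$. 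The abstract theorem on $\Gamma$-convergence of equicoercive functionals then gives $\lim_n\min_X G_n=\min_X\tilde G$ and that every sequence $u_n\in X$ with $G_n(u_n)=\min_X G_n$ is relatively compact in $L_\Phi(\Omega)$ with every cluster point $\tilde u$ a minimizer of $\tilde G$; since $\{u_n\}$ is energy-bounded the convergence is also weak in $W_\Phi^1(\Omega)$, so $\tilde u\in X$, and a.e.\ along a further subsequence by Theorem 1. Existence of $\tilde u$ and attainment of the infima are guaranteed by Theorem 4.

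The step I expect to be the main obstacle is matching the recovery ($\limsup$) sequence with the constraint. Given a minimizer $\tilde u$ of $\tilde G$ over $X$, the recovery sequence $u_n\to\tilde u$ supplied by $F_n\to\tilde F$ need not lie in $X$. Here I would invoke the fundamental $L_\Phi$-estimate of Proposition 8: on a shrinking boundary layer one glues $u_n$ to $\tilde u$ by a cutoff $\phi_n$, forming $w_n=\phi_n u_n+(1-\phi_n)\tilde u$, which lies in $X$ by Lemma 2 since the correction $w_n-\tilde u$ is supported away from $\partial\Omega$; the estimate bounds $F_n(w_n,\Omega)$ by $(1+\sigma)F_n(u_n,\Omega)+M_\sigma\int_\Omega\Phi(|u_n-\tilde u|)\,dx+\sigma$, and the modular term vanishes because $u_n\to\tilde u$ in $L_\Phi(\Omega)$ (using $\Delta_2$). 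Adding $\int_\Omega g(x,w_n)\,dx$, whose limit is $\int_\Omega g(x,\tilde u)\,dx$ by the continuity established above (the role of $\Phi\prec\prec P$ and the upper bound on $g$ being to keep this perturbation controlled through the gluing), and letting $\sigma\to0$ along a diagonal yields $\limsup_n G_n(w_n)\le\tilde G(\tilde u)$ with $w_n\in X$, completing the $\limsup$ half. Everything else is the transcription of the $L^p$ arguments of \cite{DalMod86} and \cite{gammacon}.
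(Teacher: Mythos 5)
Your proposal is correct and follows essentially the same route as the paper: the lower bound via the $\Gamma$-liminf inequality for $F_{n}$ together with the compact embedding of $W_{\Phi}^{1}(\Omega)$ into $L_{B}(\Omega)$ and dominated convergence/Fatou for the $g$-term, equicoercivity from the growth bounds and the Poincar\'e inequality, and the recovery sequence forced into $X$ by gluing with cutoffs via the fundamental $L_{\Phi}$-estimate. The only slips are in the numbering of the cited results (the compact embedding is Theorem 2 and existence of minimizers is Theorem 3, not Theorems 3 and 4), not in the mathematics.
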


\begin{proof}
From theorem $3$, the functionals $G_{n},\tilde{G}$ attain their minimum in $X$. For any sequence such that $\displaystyle G_{n}(u_{n})=\min_{u\in X}G_{n}(u)$, we have that $u_{n}$ is bounded in $W_{\Phi}^{1}(\Omega)$ so, up to a subsequence $u_{n_{k}}$, it converges in $L_{\Phi}(\Omega)$ and pointwise a.e. to a function $\tilde{u}\in W_{\Phi}^{1}(\Omega)$ and
\begin{equation}
\begin{array}{l}
\displaystyle\liminf_{n\rightarrow\infty}\left(\min_{u\in X}G_{n}(u)\right)=\liminf_{n\rightarrow\infty}\left(\min_{u\in X}G_{n_{k}}(u)\right)
\end{array}
\label{eq:10}
\end{equation}
Take a fixed subset $B\subset\subset\Omega$; if the sequence $u_{n_{k}}$ converges to $\tilde{u}$, the sequence $F_{n_{k}}$ $\Gamma(L_{\Phi})-$converges to $\tilde{F}_{n_{k}}$ in $B$ and hence
\[\tilde{F}(\tilde{u},B)\leq\liminf_{n\rightarrow\infty}F_{n_{k}}(u_{n_{k}},B)\leq\liminf_{n\rightarrow\infty}F_{n_{k}}(u_{n_{k}},\Omega)\]
 Fatou's lemma gives
\[\liminf_{n\rightarrow\infty}\int_{\Omega}g(x,u_{n_{k}})~dx\geq\int_{\Omega}g(x,\tilde{u})~dx \]
Combining, we obtain
\begin{align*}\tilde{F}(\tilde{u},B)+&\int_{\Omega}g(x,\tilde{u})~dx
\\&\leq\liminf_{n\rightarrow\infty}\left[F_{n_{k}}(u_{n_{k}},B)+\int_{\Omega}g(x,u_{n_{k}})~dx\right]
=\liminf_{n\rightarrow\infty}G_{n_{k}}(u_{n_{k}})
\end{align*}
and hence taking $B\uparrow\Omega$ yields
\[\tilde{G}(\tilde{u})\leq\liminf_{n\rightarrow\infty}G_{n_{k}}(u_{n_{k}})\]
Consider now the second term $\displaystyle K(u)=\int_{\Omega}g(x,u)~dx$ and note that Lemma $1$ implies the continuity of $K$ in X.

We will show that for all $\varepsilon>0$ there is a sequence $(v_{n})$ in $X$ converging to $\tilde{v}$ so that
\[\limsup_{n\rightarrow\infty}F_{n}(v_{n},\Omega)\leq (1+\varepsilon)\tilde{F}(\tilde{v},\Omega)+c\varepsilon\]
with $c=c(\tilde{v})$. Fix $\varepsilon\in [0,1]$. Our assumption says that there is a sequence  $(w_{n})$ converging to $\tilde{v}$ in $L_{\Phi}(\Omega)$ such that
\[\tilde{F}(\tilde{v},\Omega)\geq\limsup_{n\rightarrow\infty}F_{n}(w_{n},\Omega)\]
To have that $v_{n}\in X$, we modify $w_{n}$ by taking a compact subset $B$ of $\Omega$ with
\[\int_{\Omega\setminus B}(1+\Phi(|D\tilde{v}|))~dx<\varepsilon\]
and sets $\Omega_{1},\Omega_{2}$ with $B\subset\Omega_{1}\subset\Omega_{2}\subset\Omega$. Applying proposition $6$, one can find $M>0$ and cutoff functions $\phi_{1},\phi_{2},...,\phi_{k}$ of $C_{0}^{\infty}(\Omega_{2})$ between the sets $\Omega_{1},\Omega_{2}$ so that
\begin{align*}\min_{1\leq i\leq k}F_{n}(\phi_{i}w_{n}&+(1-\phi_{i})\tilde{v},\Omega)
\\&\leq (1+\varepsilon)[F_{n}(w_{n},\Omega)+F_{n}(\tilde{v},\Omega\setminus B)]
\\&+\varepsilon\left[\int_{\Omega}\Phi(w_{n})~dx+\int_{\Omega\setminus B}\Phi(\tilde{v})~dx+1\right]+M\int_{\Omega\setminus B}\Phi(w_{n}-\tilde{v})~dx
\end{align*}
for $n\in\mathbb{N}$. If we denote by $i_{n}$ the index at which the minimum is attained and define $v_{n}=\phi_{i_{n}}w_{n}+(1-\phi_{i_{n}})\tilde{v}\in X$ then $v_{n}$ converges to $\tilde{v}$ in $X$. Finally,
\begin{align*}\limsup_{n\rightarrow\infty}&F_{n}(v_{n},\Omega)
\\&\leq (1+\varepsilon)\left[\limsup_{n\rightarrow\infty}F_{n}(w_{n},\Omega)+C\int_{\Omega\setminus B}(1+\Phi(|D\tilde{v}|))~dx\right]
\\&+\varepsilon\left[2\int_{\Omega}\Phi(\tilde{v})~dx+1\right]
\\&\leq (1+\varepsilon)\tilde{F}(\tilde{v},\Omega)+c\varepsilon
\end{align*}
which completes the proof.

\end{proof}

In our previous discussion, the function $f$ may be taken in the form $f(x,z,p)$ instead of $f(x,p)$.

The following theorem shows the connection between the distance $d$ and the $\Gamma-$convergence. Together with proposition $4$, it shows that $(\mathcal{F},d)$ is complete.

\begin{thm}
Suppose that $\{F_{n}\}_{n>0}$ is a sequence in $\mathcal{F}$ and $F_{\gamma}\in\mathcal{F}$. The following conditions are equivalent:
\begin{enumerate}
\item $\displaystyle\lim_{n\rightarrow\infty}d(F_{n},F_{\gamma})=0$
\item $\displaystyle\Gamma(L_{\Phi})\lim_{n\rightarrow\infty}(F_{n})=F_{\gamma}$
\item $\displaystyle\lim_{n\rightarrow\infty}(T_{\varepsilon}F_{n})(u,\Omega)=(T_{\varepsilon}F_{\gamma})(u,\Omega)$ $\forall\varepsilon>0$, $u\in L_{\Phi}$, $\Omega\in\mathcal{A}(A)$
\end{enumerate}
\end{thm}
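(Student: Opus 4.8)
The plan is to close the cycle $(2)\Rightarrow(3)\Rightarrow(1)\Rightarrow(2)$, which forces the three statements to be equivalent; I will also indicate the direct argument $(3)\Rightarrow(2)$, though logically it is subsumed. Throughout I would take $h$ to be a fixed continuous, strictly increasing homeomorphism of $\bar{\mathbb R}$ onto a bounded interval (needed both for the series defining $d$ to converge and for $d$ to separate points), and $\mathcal B=\{B_k\}$ to be a countable base of bounded open sets. Three elementary facts about the $\varepsilon$-Yosida transform should be recorded first. (a) \emph{Monotonicity}: $\varepsilon\mapsto T_\varepsilon F(u,\Omega)$ is non-decreasing as $\varepsilon\downarrow0$, so $\sup_{\varepsilon>0}T_\varepsilon F(u,\Omega)=\sup_{i\in\mathbb N}T_{1/i}F(u,\Omega)$, which by Proposition~3 equals $F(u,\Omega)$. (b) \emph{Equi-Lipschitz dependence on the state}: $u\mapsto T_\varepsilon F(u,\Omega)$ is $\varepsilon^{-1}$-Lipschitz on $L_\Phi(\Omega)$, uniformly in $F\in\mathcal F$, directly from the infimal-convolution form of $T_\varepsilon$ and the triangle inequality for $\|\cdot\|_{L_\Phi(\Omega)}$. (c) \emph{Equicoercivity}: by the lower bound $c^1_f\Phi(|p|)\le f(x,p)$ and the compact embedding of $W_\Phi^1$ into $L_\Phi$, for each $t$ the set $\{v:F(v,\Omega)+\varepsilon^{-1}\|u-v\|_{L_\Phi(\Omega)}\le t\}$ lies in a fixed compact subset of $L_\Phi(\Omega)$.

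For $(2)\Rightarrow(3)$, fix $\varepsilon,u,\Omega$. Since $v\mapsto\varepsilon^{-1}\|u-v\|_{L_\Phi(\Omega)}$ is continuous on $L_\Phi(\Omega)$, $\Gamma(L_\Phi)$-convergence is preserved by this perturbation, so $F_n(\cdot,\Omega)+\varepsilon^{-1}\|u-\cdot\|_{L_\Phi(\Omega)}$ $\Gamma(L_\Phi)$-converges to $F_\gamma(\cdot,\Omega)+\varepsilon^{-1}\|u-\cdot\|_{L_\Phi(\Omega)}$; combining this with (c) and the convergence-of-minima theorem for equicoercive $\Gamma$-convergent families gives exactly $T_\varepsilon F_n(u,\Omega)\to T_\varepsilon F_\gamma(u,\Omega)$. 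For the converse $(3)\Rightarrow(2)$: given $u_n\to u$ in $L_\Phi$, taking $v=u_n$ in the infimum defining $T_\varepsilon F_n(u,\Omega)$ gives $T_\varepsilon F_n(u,\Omega)\le F_n(u_n,\Omega)+\varepsilon^{-1}\|u-u_n\|_{L_\Phi(\Omega)}$, so passing to $n\to\infty$ and then $\sup_{\varepsilon>0}$ (using Proposition~3) yields $F_\gamma(u,\Omega)\le\liminf_nF_n(u_n,\Omega)$, i.e.\ the $\Gamma$-$\liminf$ inequality; for the recovery sequence one may assume $F_\gamma(u,\Omega)<\infty$, and then from $T_{1/i}F_n(u,\Omega)\to T_{1/i}F_\gamma(u,\Omega)$ ($n\to\infty$) and $T_{1/i}F_\gamma(u,\Omega)\to F_\gamma(u,\Omega)$ ($i\to\infty$) a standard diagonal extraction produces $i(n)\to\infty$ with $T_{1/i(n)}F_n(u,\Omega)\to F_\gamma(u,\Omega)$; near-minimizers $v_n$ of the $i(n)$-th Yosida functional then satisfy $\|u-v_n\|_{L_\Phi(\Omega)}\to0$ (the penalty term is controlled by the small difference of values divided by $i(n)$) and $\limsup_nF_n(v_n,\Omega)\le F_\gamma(u,\Omega)$.

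The implication $(3)\Rightarrow(1)$ is cheap: $(3)$ with $(\varepsilon,u,\Omega)=(1/i,w_j,B_k)$ makes every term of $d(F_n,F_\gamma)$ tend to $0$ by continuity of $h$, while boundedness of $h$ dominates the series by $\sum_{i,j,k}2\|h\|_\infty 2^{-(i+j+k)}$. For $(1)\Rightarrow(2)$ I would first record that $d$ is a metric, the only nontrivial axiom being that $d(F,G)=0$ implies $F=G$: since $h$ is injective, $d(F,G)=0$ gives $T_{1/i}F(w_j,B_k)=T_{1/i}G(w_j,B_k)$ for all $i,j,k$; fact (b) and the density of $\{w_j\}$ in $W_\Phi^1$ upgrade this to $T_{1/i}F(u,B_k)=T_{1/i}G(u,B_k)$ for every $u\in W_\Phi^1$, in particular for the affine maps $u_p(x)=p\cdot x$; letting $i\to\infty$ and using (a)/Proposition~3 gives $F(u_p,B_k)=G(u_p,B_k)$, i.e.\ (writing $f_F$, $f_G$ for the integrands of $F$, $G$) $\int_{B_k}f_F(x,p)\,dx=\int_{B_k}f_G(x,p)\,dx$ for all $k$ and $p$, whence (the $B_k$ generate the Borel $\sigma$-algebra and $f(x,\cdot)$ is continuous) $f_F=f_G$ a.e., i.e.\ $F=G$. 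With $d$ a genuine metric and $(2)\Rightarrow(1)$ already available, I conclude by the standard subsequence argument: if $d(F_n,F_\gamma)\to0$, every subsequence has, by the compactness Proposition~4, a further subsequence $\Gamma(L_\Phi)$-converging to some $G\in\mathcal F$; applying $(2)\Rightarrow(1)$ to it gives $d(\cdot,G)\to0$ along that subsequence, so the triangle inequality forces $d(G,F_\gamma)=0$ and $G=F_\gamma$; since (under $\Delta_2$) $L_\Phi$ is separable and sequential $\Gamma$-convergence on a separable metric space is induced by a topology (\cite{gammacon}), the sub-subsequence principle yields $F_n\xrightarrow{\Gamma(L_\Phi)}F_\gamma$.

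The step I expect to be the genuine obstacle is not any single implication but the transfer of information from the \emph{countable} data $\{T_{1/i}F(w_j,B_k)\}$ recorded by $d$ to the \emph{full} family $\{T_\varepsilon F(u,\Omega)\}$: fact (b) removes the dependence on $u$ and fact (a) removes the dependence on $\varepsilon$ at the level of the limit $F$, but the dependence on $\Omega$ is delicate. I sidestep it above by routing $(1)\Rightarrow(2)$ through Proposition~4 and the sub-subsequence principle, so that only equality of the Yosida transforms on the countable families is ever needed, and that is settled by evaluating on linear functions. If one instead insists on proving $(1)\Rightarrow(3)$ directly, the crux becomes the inner regularity of the set function $\Omega\mapsto T_\varepsilon F(u,\Omega)$; establishing it would use Lemma~4 and the fundamental $L_\Phi$-estimate of Proposition~9 to glue a near-optimal competitor across $\partial B$ with a controlled error, together with the inner continuity of the Luxemburg norm as $B\nearrow\Omega$.
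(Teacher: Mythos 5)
The paper states this theorem without any proof at all (it is implicitly deferred to the Dal Maso--Modica reference), so there is nothing in the text to compare against; judged on its own, your argument is correct and is essentially the standard proof from that source. The cycle $(2)\Rightarrow(3)\Rightarrow(1)\Rightarrow(2)$ is well chosen: the facts (a)--(c) about $T_{\varepsilon}$ are exactly what is needed, the derivation of both $\Gamma$-inequalities from (3) via near-minimizers and a diagonal choice of $i(n)$ is sound (the penalty bound $i(n)\|u-v_{n}\|\leq T_{1/i(n)}F_{n}(u,\Omega)+\delta_{n}$ uses $F_{n}\geq 0$, which holds here), and routing $(1)\Rightarrow(2)$ through the separating property of $d$, the compactness of Proposition~4, and the Urysohn property of $\Gamma$-convergence is the right way to avoid proving $(1)\Rightarrow(3)$ directly. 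You correctly identify the genuinely delicate point — upgrading information from the countable data $\{T_{1/i}F(w_{j},B_{k})\}$ to all $(\varepsilon,u,\Omega)$ — and your workaround (identify the limit only on linear functions and countably many sets, then invoke the integral representation to conclude $F=G$ everywhere) is exactly how the cited literature handles it. Two small points worth making explicit if this were written out in full: the equicoercivity in (c) must be uniform over $\mathcal{F}(c_{f}^{1},c_{f}^{2},\Phi)$ (it is, since the lower bound constant $c_{f}^{1}$ is fixed for the class), and the passage from $\int_{B_{k}}f_{F}(x,p)\,dx=\int_{B_{k}}f_{G}(x,p)\,dx$ to $f_{F}=f_{G}$ a.e.\ needs $\{B_{k}\}$ to be a differentiation basis (e.g.\ rational balls), which is consistent with, but not forced by, the paper's loose description of $\mathcal{B}$.
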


\begin{corollary}
The map
$\displaystyle m_{\Omega,X,g}(F)=\min_{u\in X} J(u,\Omega)$
is continuous on $(\mathcal{F},d)$ for $\Omega\in\mathcal{A}(A)$, $X\subseteq W_{\Phi}^{1}(\Omega)$.
\end{corollary}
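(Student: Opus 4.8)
The plan is to derive the continuity of $m_{\Omega,X,g}$ from Theorem~5 (the equivalence of $d$-convergence and $\Gamma(L_\Phi)$-convergence) together with Proposition~9 (the convergence of minimizers under $\Gamma$-convergence). Concretely, suppose $F_n \to F_\gamma$ in $(\mathcal{F},d)$. By the implication $(1)\Rightarrow(2)$ of Theorem~5, this is equivalent to $\Gamma(L_\Phi)\lim_n F_n = F_\gamma$ in the sense of functionals on $L_\Phi(\Omega)$. Since $\mathcal{F}$ is closed under $\Gamma(L_\Phi)$-convergence (this is the content of the Main $\Gamma$-Convergence result, Proposition~4) and $F_\gamma \in \mathcal{F}$ by hypothesis, the hypotheses of Proposition~9 are met: $\{F_n\}$ is a sequence in $\mathcal{F}$, $X$ is a (weakly closed) subset of $W_\Phi^1(\Omega)$, and the $\Gamma$-limit $\tilde F = F_\gamma$ lies in $\mathcal{F}$. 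Writing $J(u,\Omega) = F(u,\Omega) + \int_\Omega g(x,u(x))\,dx$ and $G_n$, $\tilde G$ for the corresponding functionals with $F$ replaced by $F_n$, $F_\gamma$, Proposition~9 gives directly
\[
\lim_{n\to\infty} m_{\Omega,X,g}(F_n) = \lim_{n\to\infty}\min_{u\in X} G_n(u) = \min_{u\in X}\tilde G(u) = m_{\Omega,X,g}(F_\gamma),
\]
which is the asserted sequential continuity of $m_{\Omega,X,g}$ on $(\mathcal{F},d)$.

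To make this rigorous one should verify that the growth hypothesis on $g$ in the corollary is consistent with the two-sided bound on $g$ required in Proposition~9, namely $c_1(x) + P^{-1}c_g^1 B(|u|) \le g(x,u) \le c_2(x) + P^{-1}c_g^2 B(|u|)$ with $c_i \in L^1(\Omega)$ and $\Phi \prec\prec P$; since the corollary fixes $g$ as part of the data defining $m_{\Omega,X,g}$, this is an assumption carried over rather than something to be proved. It also suffices to argue for sequences because $(\mathcal{F},d)$ is metric, so sequential continuity is equivalent to continuity. One small point to record is that the conclusion of Proposition~9 already asserts convergence of the minimum values for \emph{every} subsequence of $\{F_n\}$ (the argument there extracts a further subsequence along which minimizers converge strongly in $L_\Phi(\Omega)$, weakly in $W_\Phi^1(\Omega)$ and a.e.), and since the limit value $\min_{u\in X}\tilde G(u)$ is the same for every such subsequence, the full sequence $m_{\Omega,X,g}(F_n)$ converges; this standard subsequence argument is worth stating explicitly.

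The main obstacle, such as it is, is purely one of bookkeeping: one must confirm that the class $\mathcal{F}$ in the corollary (with its fixed constants $c_f^1, c_f^2, \Phi$) together with the fixed $g$ are exactly the data for which Theorem~5 and Proposition~9 were established, so that no additional hypothesis is smuggled in — in particular that $X \subseteq W_\Phi^1(\Omega)$ is weakly closed, which is needed for the direct-method argument behind Theorem~3 guaranteeing that the minima $\min_{u\in X} G_n(u)$ and $\min_{u\in X}\tilde G(u)$ are attained and finite. Once this is in place, the corollary is an immediate corollary of Theorem~5 combined with Proposition~9, with no further analysis required.
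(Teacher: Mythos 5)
Your argument is correct and is exactly the route the paper intends: the corollary is stated without explicit proof precisely because it follows by combining Theorem~5 (equivalence of $d$-convergence with $\Gamma(L_{\Phi})$-convergence) with Proposition~9 (convergence of the minima under $\Gamma$-convergence), which is what you do. Your added remarks on the subsequence argument and on the need for $X$ to be weakly closed are sound bookkeeping that the paper leaves implicit.
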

\section{Random functionals and the ergodic theorem}

We denote by $(\mathcal{S},\Sigma,P)$ a fixed probability space where $\Sigma$ is the $\sigma-$algebra on $\mathcal{S}$ and $P$ is the probability measure. A random functional is a measurable function $F:\mathcal{S}\rightarrow\mathcal{F}$ when $\mathcal{F}$ is endowed with the field $\Sigma_{\mathcal{S}}$ generated by the distance $d$. The image $P(F^{-1}(S))$, $S\in\Sigma_{\mathcal{S}}$ is the distribution law of $F$.  If $F$ and $G$ have the same distribution law, we write $F\sim G$.

For $z\in\mathbb{Z}^{n}$ we define the translation operator $\tau_{z}$ by
\begin{equation}
\begin{array}{l}
\displaystyle(\tau_{z}F)(u,\Omega)=\int_{\Omega}f(x+z,Du)~dx
\end{array}
\label{eq:11}
\end{equation}
and for $\varepsilon>0$ the homothety operator $\rho_{\varepsilon}$ by
\begin{equation}
\begin{array}{l}
\displaystyle(\rho_{\varepsilon}F)(u,\Omega)=\int_{\Omega}f(\frac{x}{\varepsilon},Du)~dx
\end{array}
\label{eq:12}
\end{equation}

Note that, since the integrand is independent of $u$,
\[(\tau_{z}F)(u,\Omega)=F(\tau_{z}u,\tau_{z}\Omega)\]
where $\tau_{z}u(x)=(x-z)$, $\tau_{z}\Omega=\{x\in\mathbb{R}^{n}:x-z\in\Omega\}$ and

\[\rho_{\varepsilon}F(u,\Omega)=\varepsilon^{n}F(\rho_{\varepsilon} u,\rho_{\varepsilon}\Omega)\]
where $\rho_{\varepsilon} u=\frac{1}{\varepsilon}u(\varepsilon x)$, $\rho_{\varepsilon}\Omega=\{x\in\mathbb{R}^{n}:\varepsilon x\in\Omega\}$. If $F$ is a random functional then both the translated and the homothetic functionals are also random functionals.

A stochastic homogenization process is a family of random variables $(F_{\varepsilon})_{\varepsilon>0}$ that has the same distribution law with the random functionals $\rho_{\varepsilon}F$.

For $F\in\mathcal{F}$, $u\in L_{\Phi}(\Omega)$, we consider the Dirichlet problem

\begin{equation}
\begin{array}{l}
\displaystyle m(F,u_{0},\Omega)=\min_{u}\{F(u,\Omega):u-u_{0}\in W_{0,\Phi}^{1}(\Omega)\}
\end{array}
\label{eq:13}
\end{equation}

Let $Q_{1/\varepsilon}$ be the cube
\[Q_{1/\varepsilon}=\{x\in\mathbb{R}^{n}:|x_{i}|<1/\varepsilon,i=1,..,n\}\]
with volume $(2/\varepsilon)^{n}$. We denote by $l_{p}=p\cdot x$ the linear function with gradient $p$. The main theorem of this section is the following:

\begin{thm}
Let $F$ be a random integral functional and define $F_{\varepsilon}=\rho_{\varepsilon}F$. Suppose that $\tau_{z}F=F(\tau_{z}u,\tau_{z}A)$ and $F$ have the same distribution law. Then the family $F_{\varepsilon}$ converges $P-$almost everywhere as $\varepsilon\rightarrow 0^{+}$ to a random integral functional $F_{0}$. In addition, there is a set $\mathcal{S}'\subset\mathcal{S}$ of full measure such that the limit
\[f_{0}(\omega,p)=\lim_{\varepsilon\rightarrow 0^{+}}\frac{m(u,l_{p},Q_{1/\varepsilon})}{|Q_{1/\varepsilon}|}\]
exists for all $\omega\in\mathcal{S}'$ and
\[F_{0}(u,\Omega)=\int_{\Omega}f_{0}(\omega,p)~dx\]
Moreover, if $F$ is ergodic, the integrand $f_{0}$ is independent of $\omega$ and
\[f_{0}(p)=\lim_{\varepsilon\rightarrow 0^{+}}\int_{\Omega}\frac{m(u,l_{p},Q_{1/\varepsilon})}{|Q_{1/\varepsilon}|}\]
for all $p\in\mathbb{R}^{n}$, $\varepsilon>0$.
\end{thm}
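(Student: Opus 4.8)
The plan is to mimic the classical subadditive-ergodic approach of Dal Maso--Modica \cite{DalMod86}, adapting each ingredient to the Orlicz setting using the tools already assembled. First I would fix $p\in\mathbb{R}^{n}$ and study the random set function
\[
(A,\omega)\longmapsto m(F(\omega),l_{p},A)=\min\{F(\omega)(u,A):u-l_{p}\in W_{0,\Phi}^{1}(A)\},
\]
which is well defined by Theorem 3 applied to $X=l_{p}+W_{0,\Phi}^{1}(A)$. Using the stationarity hypothesis $\tau_{z}F\sim F$ one checks that, for cubes, $A\mapsto m(F(\omega),l_{p},A)$ is (up to the harmless additive data) a stationary subadditive process in the sense of Akcoglu--Krengel: subadditivity over disjoint cubes follows by gluing competitors with the cutoff construction of Lemma 2 and the fundamental $L_{\Phi}$-estimate of Proposition 8, the error term $M_\sigma\int\Phi(|u-v|)$ being controlled because the two competitors both attach to $l_{p}$ on the interface. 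The growth bound \eqref{eq:4} gives the $L^{1}$-integrability $c_{f}^{1}\Phi(|p|)|A|\le m(F(\omega),l_{p},A)\le c_{f}^{2}(1+\Phi(|p|))|A|$ needed to invoke the multidimensional subadditive ergodic theorem, which yields a function $f_{0}(\omega,p)$ with
\[
f_{0}(\omega,p)=\lim_{\varepsilon\to 0^{+}}\frac{m(F(\omega),l_{p},Q_{1/\varepsilon})}{|Q_{1/\varepsilon}|}
\]
for $P$-a.e.\ $\omega$, the limit being deterministic when $F$ is ergodic. Taking a countable dense set of $p$'s and using the Lipschitz dependence of $p\mapsto m(F(\omega),l_{p},Q)/|Q|$ coming from convexity and \eqref{eq:4}, one gets a single null set off which the limit exists for all $p$; this produces the full-measure set $\mathcal{S}'$.

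Next I would pass from the pointwise limits of $m$ to $\Gamma$-convergence of the functionals themselves. Fix $\omega\in\mathcal{S}'$. By the compactness Proposition 4, every subsequence of $F_{\varepsilon}(\omega)=\rho_{\varepsilon}F(\omega)$ has a further subsequence $\Gamma(L_{\Phi})$-converging to some $F_{0}(\omega)\in\mathcal{F}$, which by the integral representation Theorem 4 is of the form $F_{0}(\omega)(u,A)=\int_{A}f_{0}^{\omega}(x,Du)\,dx$ with $f_{0}^{\omega}$ convex and satisfying \eqref{eq:4}. The point is to identify $f_{0}^{\omega}$ with the $\omega$-constant function $f_{0}(\omega,\cdot)$ of the first step, which forces the whole family (not just a subsequence) to converge and hence gives $P$-a.e.\ convergence of $F_{\varepsilon}$. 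The identification rests on the continuity of minimum values under $\Gamma$-convergence: Proposition 9 (with $X=l_{p}+W_{0,\Phi}^{1}(Q)$ and trivial $g$) shows
\[
m(F_{0}(\omega),l_{p},Q)=\lim_{\varepsilon\to 0^{+}}m(F_{\varepsilon}(\omega),l_{p},Q),
\]
and the scaling relation $\rho_{\varepsilon}F(u,\Omega)=\varepsilon^{n}F(\rho_{\varepsilon}u,\rho_{\varepsilon}\Omega)$ rewrites $m(F_{\varepsilon}(\omega),l_{p},Q_{1})$ as $\varepsilon^{n}m(F(\omega),l_{p},Q_{1/\varepsilon})$, which by the first step converges to $f_{0}(\omega,p)|Q_{1}|$. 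Evaluating $m(F_{0}(\omega),l_{p},Q)$ on the linear competitor $l_{p}$ and using that $f_{0}^{\omega}$ is the integrand of a $\Gamma$-limit (hence $l_{p}$ is optimal for it on cubes, by the Dal Maso--Modica argument that the cell-problem value equals the integrand on affine data) gives $f_{0}^{\omega}(x,p)=f_{0}(\omega,p)$ for a.e.\ $x$ and every $p$ in the countable dense set, and then for all $p$ by continuity. This shows $f_{0}^{\omega}$ is spatially constant, equal to $f_{0}(\omega,\cdot)$, independently of the subsequence.

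Finally, for the measurability and ergodicity statements: $F_{0}:\mathcal{S}\to\mathcal{F}$ is measurable because $F_{0}(\omega)$ is the $d$-limit of the measurable maps $F_{\varepsilon}(\omega)$ (using Theorem 5, which equates $d$-convergence with $\Gamma(L_{\Phi})$-convergence in $\mathcal{F}$), so $F_{0}$ is again a random functional; the distributional identity $F_{0}\sim\tau_{z}F_{0}$ is inherited in the limit from $F_{\varepsilon}\sim\tau_{z}F_{\varepsilon}$. When $F$ is ergodic, the subadditive ergodic theorem already gives that $f_{0}(\omega,p)$ is a.s.\ constant in $\omega$ for each fixed $p$, and the now-established formula $f_{0}(p)=\lim_{\varepsilon\to0^{+}}\varepsilon^{n}m(F(\omega),l_{p},Q_{1/\varepsilon})$ holds for all $p$ and all $\omega$ in a set of full measure. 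The main obstacle is the glueing step underlying subadditivity and the identification of the $\Gamma$-limit integrand with the ergodic limit: one must ensure that the fundamental $L_{\Phi}$-estimate, whose error term involves $\int\Phi(|u-v|)$ rather than $\int|u-v|^{p}$, still produces a genuinely subadditive process after the additive data is absorbed, and that the $\Delta_{2}$-dependent constants $k,3^{\beta-1},(2/\eta)^{\beta}$ appearing in Proposition 8 do not obstruct the $\sigma\to0$ limit — this is where the uniformity of the estimate over $\mathcal{F}(c_{f}^{1},c_{f}^{2},\Phi)$ is essential.
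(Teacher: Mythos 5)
Your proposal follows essentially the same route as the paper's proof: apply the subadditive ergodic theorem to $\mu_{p}(\omega)(A)=m(F(\omega),l_{p},A)$ after checking stationarity in law, use the scaling identity $\mu_{p}(\omega)(tQ)=t^{n}m((\rho_{1/t}F)(\omega),l_{p},Q)$ together with $\Gamma$-compactness and the integral representation, and identify the limit integrand through the determination-by-minima result (the paper's Theorem 7 on families shrinking nicely). The one cosmetic difference is that you invoke the fundamental $L_{\Phi}$-estimate to obtain subadditivity, whereas direct gluing of the Dirichlet minimizers, which all agree with $l_{p}$ on the interfaces, already yields exact subadditivity, so the concern in your final paragraph about the $(1+\sigma)$ factors obstructing the subadditive structure does not arise.
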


We give a sketch of the proof which can be found in \cite{Dal86}. We need the following results.

In \cite{DaMod86}, the question of determining an integral functional by the knowledge of their minima was studied. In particular, suppose the numbers
\[m(u,l_{p},\Omega)\]
are given and that we have a family of subsets $\{A_{\rho}\}_{\rho>0}$ of $\mathbb{R}^{n}$ which shrinks nicely to $x$ as $\rho\rightarrow 0^{+}$. This means that the following density-type inequalities are satisfied:

\[A_{\rho}\subseteq B(x,\rho)\qquad |A_{\rho}|\geq c|B(x,\rho)|\]

where $B(x,\rho)$ is the ball centered at $x$ of radius $\rho$. The following theorem holds

\begin{thm}
Suppose $f:\Omega\times\mathbb{R}^{n}\rightarrow\mathbb{R}$ satisfies the assumptions of section $3$, $(3.1)$ and
\[\phi_{1}(p)\leq f(x,p)\leq\phi_{2}(p)\]
for all $(x,p)\in(\Omega\times\mathbb{R}^{n})$ where $\phi_{1}$, $\phi_{2}$ are convex in $p$ and
\[\lim_{|p|\rightarrow\infty}\frac{\phi_{1}(p)}{|p|}=\infty\]
Then there is a measurable subset $N\subseteq\mathbb{R}^{n}$ with $|N|=0$ such that
\[f(x,p)=\lim_{\rho\rightarrow 0^{+}}\frac{m(u,l_{p},A_{\rho})}{|A_{\rho}|}\]
for all $p\in\mathbb{R}^{n}$, $x\in\mathbb{R}\setminus N$ and every family $\{A_{\rho}\}_{\rho>0}$ shrinking nicely to $x$ as $\rho\rightarrow 0^{+}$.
\end{thm}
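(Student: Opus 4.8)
The plan is to recover the integrand $f(x,p)$ pointwise from the boundary-value minimization data $m(u,l_p,A_\rho)$ by a Lebesgue differentiation argument, exactly parallel to the $L^p$ case treated by Dal Maso and Modica. First I would fix $p\in\mathbb{R}^n$ and observe that, by the two-sided bound \eqref{eq:4} (with $\phi_1,\phi_2$ replacing the $\Phi$-growth) together with the convexity assumption \eqref{eq:3}, the set function $A\mapsto F(l_p,A)=m(l_p,l_p,A)$ is, on the one hand, dominated by $\int_A \phi_2(p)\,dx$, hence absolutely continuous with respect to Lebesgue measure, and on the other hand bounded below by $\int_A \phi_1(p)\,dx$. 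The absolute continuity gives a density $f(\cdot,p)\in L^1_{\mathrm{loc}}(\Omega)$ with $F(l_p,A)=\int_A f(x,p)\,dx$; this is precisely Step 1 of the integral-representation theorem (Theorem 6) applied to the functional in question, so I would invoke that result rather than reprove it.

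Next I would show $m(u,l_p,A_\rho)=F(l_p,A_\rho)+o(|A_\rho|)$ as $\rho\to 0^+$, i.e.\ that on small sets the boundary-value minimization is asymptotically achieved by the linear competitor $l_p$ itself. The superadditivity/subadditivity of $A\mapsto m(u,l_p,A)$ together with the superlinear growth $\phi_1(p)/|p|\to\infty$ is what forces any minimizing sequence to have gradients concentrating near $p$; quantitatively, one uses the lower bound $F(u,A_\rho)\ge \int_{A_\rho}\phi_1(Du)\,dx$ and a De~Giorgi--Nash--type cutoff (the gluing Lemma~2 / fundamental $L_\Phi$-estimate of Proposition~9) to replace an almost-optimal $u$ by one equal to $l_p$ near $\partial A_\rho$ at a cost that is $o(|A_\rho|)$ relative to the bulk. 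This is where the $\Delta_2$-condition on $\Phi$ enters: it is needed so that the error terms $\int \Phi(|u-l_p|)$ coming from the cutoff can be controlled and absorbed, just as in Proposition~9.

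The Lebesgue differentiation step comes last: since $f(\cdot,p)\in L^1_{\mathrm{loc}}$, for a.e.\ $x$ the average $|B(x,\rho)|^{-1}\int_{B(x,\rho)}f(y,p)\,dy\to f(x,p)$, and because $A_\rho\subseteq B(x,\rho)$ with $|A_\rho|\ge c|B(x,\rho)|$ (the ``shrinks nicely'' hypothesis), the same limit holds with $B(x,\rho)$ replaced by $A_\rho$ --- this is the standard fact that the Hardy--Littlewood maximal inequality controls averages over nicely shrinking families. Combining this with the previous step gives
\[
\lim_{\rho\to 0^+}\frac{m(u,l_p,A_\rho)}{|A_\rho|}=\lim_{\rho\to 0^+}\frac{F(l_p,A_\rho)}{|A_\rho|}=\lim_{\rho\to 0^+}\frac{1}{|A_\rho|}\int_{A_\rho}f(y,p)\,dy=f(x,p)
\]
for every $x\notin N_p$, where $|N_p|=0$. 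Finally I would remove the $p$-dependence of the exceptional set: by convexity and the local Lipschitz continuity of $p\mapsto f(x,p)$ that follows from the two-sided bound, it suffices to take $N=\bigcup_{p\in\mathbb{Q}^n} N_p$, still a null set, and then pass from rational to arbitrary $p$ by continuity on both sides of the identity.

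The main obstacle I anticipate is the second step --- showing $m(u,l_p,A_\rho)-F(l_p,A_\rho)=o(|A_\rho|)$. In the classical $L^p$ setting this rests on the homogeneity of $|\cdot|^p$ and explicit scaling; in the Orlicz setting $\Phi$ has no homogeneity, so one must instead argue through the $\Delta_2$-bound $\Phi(\lambda u)\le k\lambda^\beta\Phi(u)$ for $\lambda\ge 1$ (as already exploited in Proposition~9) to get a scale-free control of the cutoff error, and one must be careful that the constant $M_\sigma$ from the fundamental estimate does not blow up as $\rho\to 0$ after rescaling $A_\rho$ to unit size. Handling this uniformity correctly, together with the superlinearity input $\phi_1(p)/|p|\to\infty$ that prevents gradient oscillations from being cheap, is the technical heart of the proof; the rest is soft functional analysis plus the Lebesgue differentiation theorem.
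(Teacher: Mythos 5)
The paper itself gives no proof of this statement --- it is quoted verbatim from Dal Maso and Modica and used as a black box --- so your proposal must stand on its own. Its overall architecture is the standard and correct one: the upper bound $m(F,l_{p},A_{\rho})\leq F(l_{p},A_{\rho})=\int_{A_{\rho}}f(y,p)\,dy$ is immediate because $l_{p}$ is itself an admissible competitor, Lebesgue differentiation over nicely shrinking families (via the maximal function) handles the resulting averages, and the removal of the $p$-dependence of the null set by passing through $p\in\mathbb{Q}^{n}$ and using the local equi-Lipschitz continuity (from convexity plus the two-sided bound) of both $f(x,\cdot)$ and $p\mapsto m(F,l_{p},A)/|A|$ is correct. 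One notational slip to fix: $F(l_{p},A)$ is not $m(l_{p},l_{p},A)$; when $f$ depends on $x$ the minimizer of the Dirichlet problem with boundary datum $l_{p}$ is in general not $l_{p}$, and conflating the two obscures exactly the point the theorem is about.

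The genuine gap is your second step, the lower bound $\liminf_{\rho}m(F,l_{p},A_{\rho})/|A_{\rho}|\geq f(x,p)$. The tools you invoke there --- the cutoff-gluing lemma and the fundamental $L_{\Phi}$-estimate --- only ever manufacture \emph{competitors}, hence \emph{upper} bounds on minima; they cannot bound a minimum from below. Nor does superlinearity ``force gradients to concentrate near $p$'': for non-constant $f(\cdot,p)$ the minimizer's gradient genuinely oscillates, and only its average over $A_{\rho}$ equals $p$. The missing ingredient is the convexity of $f(x,\cdot)$ exploited through a measurable selection $\xi(y)\in\partial_{p}f(y,p)$ of the subdifferential: for any admissible $v$,
\[
F(v,A_{\rho})\;\geq\;\int_{A_{\rho}}f(y,p)\,dy\;+\;\int_{A_{\rho}}\bigl\langle\,\xi(y)-\xi_{x},\,Dv-p\,\bigr\rangle\,dy,
\]
where the constant vector $\xi_{x}$ can be inserted for free because $\int_{A_{\rho}}(Dv-p)\,dy=0$ by the boundary condition. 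The remaining term is absorbed by Young's inequality against $\varepsilon\,\phi_{1}(Dv)$ plus a conjugate term in $\xi(y)-\xi_{x}$ --- this is precisely where the hypothesis $\phi_{1}(p)/|p|\to\infty$ enters, making the conjugate function finite --- with $\int_{A_{\rho}}\phi_{1}(Dv)\,dy$ controlled by $m(F,l_{p},A_{\rho})+o(|A_{\rho}|)=O(|A_{\rho}|)$ and the conjugate integral of size $o(|A_{\rho}|)$ at Lebesgue points of $\xi(\cdot)$. Enlarging the exceptional set $N_{p}$ to include the non-Lebesgue points of $\xi(\cdot,p)$ then closes the argument; without this subdifferential comparison your proof does not go through.
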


A function $\mu:A\rightarrow\mathbb{R}$ is called subadditive if for every finite and disjoint family $(A_{i})_{i\in I}$ with $\displaystyle|A\setminus\cup_{i\in I}A_{i}|=0$,

\[\displaystyle\mu(A)\leq\sum_{i}\mu(A_{i})\]

We say that $\mu$ is dominated if $0\leq\mu(A)\leq C|A|$ for all sets $A$. Consider now the family of dominated, subadditive functions and the group of translations $(\tau_{z}\mu)(A=\mu(\tau_{z}A)$, where $\tau_{z}A=\{x\in \mathbb{R}^{n}:x-z\in A\}$.

\begin{thm}(Ergodic)(see \cite{Akcog81}, \cite{DalMod86}): Let $\mu:\mathcal{S}\rightarrow \mathbb{R}^{n}$ be a subadditive process, periodic in law, in the sense that $\mu(\cdot)$ and $\tau_{z}\mu(\cdot)$ have the same distribution for every $z\in\mathbb{Z}^{n}$. Then, there exists measurable function $\phi:\mathcal{S}\rightarrow R$ and a subset $\Omega'\subset\Omega$ of full measure such that
\[\lim_{t\rightarrow\infty}\frac{\mu(\omega)(tQ)}{\left|tQ\right|}=\phi (\omega)\] exists a.e. $\omega\in\mathcal{S}'$ and for every cube $Q\subset \mathbb{R}^{n}$. Furthermore, if $\mu$ is ergodic then $\phi$ is constant.
\end{thm}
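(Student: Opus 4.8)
The plan is to derive the statement from the multiparameter subadditive ergodic theorem of Akcoglu--Krengel \cite{Akcog81}, exploiting only the two structural facts available for $\mu$: it is subadditive over finite disjoint families covering a set up to a null set, and it is dominated, $0\le\mu(\omega)(A)\le C|A|$. Periodicity in law is exactly stationarity of $\mu$ under the $\mathbb{Z}^{n}$-shift; passing to the canonical model one may assume a measure-preserving $\mathbb{Z}^{n}$-action $\{\tau_{z}\}$ on $(\mathcal{S},\Sigma,P)$ with $\mu(\tau_{z}\omega)(A)=\mu(\omega)(z+A)$, and ``$\mu$ ergodic'' means this action is ergodic. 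I write $Q_{0}=(0,1)^{n}$.

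\emph{Step 1 (integer dilates of the unit cube).} The array $\bigl(\mu(\omega)(z+kQ_{0})\bigr)_{z\in\mathbb{Z}^{n},\,k\in\mathbb{N}}=\bigl(\mu(\tau_{z}\omega)(kQ_{0})\bigr)$ is a dominated, stationary, subadditive spatial process, so the Akcoglu--Krengel theorem furnishes a $\{\tau_{z}\}$-invariant measurable $\phi$ with $k^{-n}\mu(\omega)(kQ_{0})\to\phi(\omega)$ for $P$-a.e.\ $\omega$. By Fekete's lemma applied to $k\mapsto\mathbb{E}[\mu(kQ_{0})]$ (subadditive, by decomposing $kQ_{0}$ into unit cubes and using stationarity), $k^{-n}\mathbb{E}[\mu(kQ_{0})]\to\inf_{k}k^{-n}\mathbb{E}[\mu(kQ_{0})]$, and in general $\phi=\lim_{k}k^{-n}\mathbb{E}[\mu(kQ_{0})\mid\mathcal{I}]$ with $\mathcal{I}$ the invariant $\sigma$-algebra; when the action is ergodic $\phi$ is $P$-a.s.\ the constant $\inf_{k}k^{-n}\mathbb{E}[\mu(kQ_{0})]$.

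\emph{Step 2 (real dilations and arbitrary cubes).} For real $t>0$, with $k=\lfloor t\rfloor$, subadditivity on the partition of $tQ_{0}$ into $kQ_{0}$ and a boundary shell, together with domination, gives $\mu(\omega)(tQ_{0})\le\mu(\omega)(kQ_{0})+C(t^{n}-k^{n})$ and, symmetrically on $(k+1)Q_{0}$, $\mu(\omega)((k+1)Q_{0})\le\mu(\omega)(tQ_{0})+C((k+1)^{n}-t^{n})$; dividing by $t^{n}$ and using Step 1 yields $t^{-n}\mu(\omega)(tQ_{0})\to\phi(\omega)$. For a general cube $Q$ (translated, if necessary, so that $0\in Q$, which alters $\mu$ only through boundary layers of lower order), fix a large integer $m$ and, for large $t$, tile $tQ$ by unit-grid cubes $z_{j}+mQ_{0}$ up to a boundary layer of volume $o(|tQ|)$; subadditivity gives $\mu(\omega)(tQ)\le\sum_{j}\mu(\tau_{z_{j}}\omega)(mQ_{0})+C\,o(|tQ|)$, and dividing by $|tQ|$ and invoking the $n$-dimensional Birkhoff theorem for the bounded function $\omega\mapsto\mu(\omega)(mQ_{0})$ gives $\limsup_{t}|tQ|^{-1}\mu(\omega)(tQ)\le m^{-n}\mathbb{E}[\mu(mQ_{0})\mid\mathcal{I}](\omega)\to\phi(\omega)$ as $m\to\infty$. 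The matching lower bound is obtained symmetrically, tiling a large cube $NQ_{0}$ by translates of $mQ$ and using Step 1 on the left-hand side. Hence $|tQ|^{-1}\mu(\omega)(tQ)\to\phi(\omega)$ for every cube $Q$, with the same $\phi$, which is the assertion; $\phi$ is constant under ergodicity by Step 1.

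The main obstacle is Step 1: the multiparameter subadditive ergodic theorem is genuinely higher-dimensional and must be proved either through Akcoglu--Krengel's superadditive-process formalism or by inducting on the dimension and feeding Kingman's one-parameter subadditive ergodic theorem coordinate by coordinate, the delicate point being that the partially-summed one-parameter processes one constructs remain subadditive and dominated. Since this is classical I would cite \cite{Akcog81} and \cite{DalMod86} for it; the remaining steps are covering-and-tiling bookkeeping in which every error term is a boundary layer of volume $o(|tQ|)$ absorbed by the domination bound $C|A|$.
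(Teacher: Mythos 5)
The paper does not actually prove this theorem: it is stated as a cited result, with the proof deferred entirely to \cite{Akcog81} and \cite{DalMod86}. Your sketch follows exactly the route taken in those references --- the multiparameter subadditive ergodic theorem of Akcoglu--Krengel for integer dilates of a lattice cube, followed by tiling and domination arguments to pass to real dilations and arbitrary cubes --- so in substance you are reconstructing the cited proof rather than competing with one in the paper. The overall architecture is correct.

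Two soft spots in Step 2 deserve attention. First, the parenthetical claim that replacing $Q$ by a translate containing the origin ``alters $\mu$ only through boundary layers of lower order'' is false pointwise in $\omega$: if $0\notin Q$ and $c$ is, say, the center of $Q$, then $tQ$ and $t(Q-c)$ are \emph{disjoint} sets drifting apart at speed $t|c|$, so there is no boundary layer to speak of; the two evaluations of $\mu$ agree only in distribution, not $\omega$-by-$\omega$. What actually rescues cubes not containing the origin is that $(tQ)_{t>0}$ is a \emph{regular} sequence in the Akcoglu--Krengel sense (it is contained in boxes anchored near the origin of comparable volume, since $\mathrm{dist}(0,tQ)\le C_{Q}\,\mathrm{diam}(tQ)$), equivalently the cone condition needed for the pointwise ergodic theorem along moving windows; your tiling argument works verbatim once you invoke that version of the maximal inequality and drop the translation step. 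Second, the theorem asserts a \emph{single} full-measure set $\mathcal{S}'$ on which the limit exists for \emph{every} cube $Q$ simultaneously, whereas your argument as written produces an exceptional null set for each $Q$ (through the Birkhoff averages attached to that $Q$). The standard fix, which is the one in \cite{DalMod86}, is to first obtain the limit on a common full-measure set for the countable family of cubes with rational vertices, and then squeeze an arbitrary cube $Q$ between rational cubes $Q^{-}\subset Q\subset Q^{+}$ with $|Q^{+}\setminus Q^{-}|<\varepsilon|Q|$, using subadditivity and the domination bound $0\le\mu(A)\le C|A|$ to control the discrepancy by $C\varepsilon$. With these two repairs the proposal is a faithful account of the cited proof.
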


We return to the proof of theorem $6$.

\begin{proof}
For fixed $p\in\mathbb{R}^{n}$ and for $\omega\in\mathcal{S}$ we define
\[\mu_{p}(\omega)(\Omega)=m(F(\omega),l_{p},\Omega)\]
which is a measurable map, since $m(\cdot,l_{p},\Omega)$ is continuous in $\mathcal{F}$. For $z\in\mathbb{Z}^{n}$,
\begin{align*}
(\tau_{z}\mu_{p})(\omega)(\Omega)&=m_{p}(\omega)(\tau_{z}\Omega)
\\&=\min_{u}\{(\tau_{z}F)(\omega)(\tau_{-z}u,\Omega):\tau_{-z}u-\tau_{-z}l_{p}\in W_{0,\Phi}^{1}(\Omega)\}
\\&=\min_{u}\{(\tau_{z}F)(\omega)(v+l_{p}(z),\Omega):v-l_{p}\in W_{0,\Phi}^{1}(\Omega)\}
\end{align*}
and
\[(\tau_{z}F)(\omega)(v+l_{p}(z),\Omega)=(\tau_{z}F)(\omega)(v,\Omega)\]
since the integrand is independent of $u$. Thus,
\[(\tau_{z}\mu_{p})(\omega)(\Omega)=m((\tau_{z}F),l_{p},\Omega)\]

which shows that $\mu_{p}$ is periodic in law. Applying theorem $4$, there exists measurable function $\phi:\Omega\rightarrow R$ and a subset $\Omega'\subset\Omega$ of full measure such that
\[\lim_{t\rightarrow\infty}\frac{\mu_{p}(\omega)(tQ)}{\left|tQ\right|}=\phi_{p}(\omega)\] exists a.e. $\omega\in\Omega'$ and for every cube $Q\subset \mathbb{R}^{n}$. Let
\[f_{0}(\omega,p)=\limsup_{t\rightarrow\infty}\frac{\mu_{p}(\omega)(Q_{t})}{\left|Q_{t}\right|}\]
We observe that the convexity of $F$ in $u$ says that the functions
\[p\rightarrow\frac{\mu_{p}(\omega)(\Omega)}{|\Omega|}\]
are convex and equibounded. Hence, $f_{0}$ is convex in $p$ and
\[f_{0}(\omega,p)=\lim_{t\rightarrow\infty}\frac{\mu_{p}(\omega)(tQ)}{\left|tQ\right|}\]
Furthermore,
\[\mu_{p}(\omega)(tQ)=t^{n}m((\rho_{1/t}F)(\omega),l_{p},Q)\]
so that, since $\rho_{\varepsilon}F=F_{\varepsilon}$,
\[\lim_{\varepsilon\rightarrow 0^{+}}\frac{m(F(\omega),l_{p},Q)}{\left|Q\right|}=f_{0}(\omega,p)\]
for each cube $Q$, $p\in\mathbb{R}^{n}$, $\omega\in\mathcal{S}'$. Fix $\omega\in\mathcal{S}'$. Corollary $1$ and proposition $4$ tell that there is an integral functional $F_{0}(\omega)\in\mathcal{F}$ such that $F_{\varepsilon}(\omega)$ $\Gamma-$ converges to $F_{0}(\omega)$. Then we are in position to compute the integrand of $F_{0}(\omega)$ since, from theorem $7$ there is a subset $N$ with $|N|=0$ such that
\begin{align*}
g_{0}(\omega,x,p)&=\lim_{\rho\rightarrow 0^{+}}\frac{m(F_{0}(\omega),l_{p},Q_{\rho})}{\left|Q_{\rho}\right|}
\\&=\lim_{\rho\rightarrow 0^{+}}\lim_{\varepsilon\rightarrow 0^{+}}\frac{m(F_{\varepsilon}(\omega),l_{p},Q_{\rho})}{\left|Q_{\rho}\right|}
\\&=\lim_{\rho\rightarrow 0^{+}}\lim_{\varepsilon\rightarrow 0^{+}}\frac{\mu_{p}(\omega)\left(\frac{1}{\varepsilon}Q_{\rho}\right)}{\left|\frac{1}{\varepsilon}Q_{\rho}\right|}=f_{0}(\omega,p)
\end{align*}
for all $x\in\mathbb{R}^{n}\setminus N$, $p\in\mathbb{R}^{n}$ so that
\[F_{0}(u,\Omega)=\int_{\Omega}f_{0}(\omega,Du)~dx\]
Note that if $F$ is ergodic, $\mu_{p}$ is constant.
\end{proof}
\begin{remark}
The last proof is derived in \cite{DalMod86} under the assumption that the integrand is independent of $u$. If the integrand depends on $u$, i.e. $f=f(\omega,x,u,Du)$, we can consider the function
\[f_{u}(\omega,x,Du)=f(\omega,x,u,Du)\]

and apply the last proof to see that
\[f_{0,u}(\omega,p)=\lim_{\varepsilon\rightarrow 0^{+}}\frac{m(F_{u}(\omega),l_{p},Q_{1/\varepsilon})}{|Q_{1/\varepsilon}|}\]
for all $\omega\in\mathcal{S}'$, $p\in\mathbb{R}^{n}$. Hence,
\[F_{0,u}(u,\Omega)=\int_{\Omega}f_{0}(\omega,u,Du)~dx\] and from the last limit,
\[f_{0}(\omega,u,p)=f_{0,u}(\omega,p)\]
The abstract form of this theorem can be applied to obtain homogenization results over random structures for partial differential equations defined in Orlicz-Sobolev spaces.
\end{remark}

\subsection{Examples of random functionals}

An application of theorem $3$ is the case of random two-phase domains. Such domains can be obtained for instance from the realization of Poisson processes. Construction of random domains has been studied in \cite{dkontog10}, using the connectivity function of continuum percolation theory\cite{Mees96}. In particular, we take a Poisson process $X$ with density $\lambda>0$ and we consider the realization $X(\omega)$ of the process in a given domain $\Omega\subset\mathbb{R}^{n}$, for some $\omega\in\mathcal{S}$. A connection function $g:\mathbb{R}^{+}\rightarrow [0,1]$ connects two points $x_{1}, x_{2}\in X$ with probability $g(|x_{1}-x_{2}|)$, where $|\cdot|$ denotes the Euclidean distance. Suppose that $\omega$ is a given realization for $X$ which is locally finite, i.e.
a finite number of points hits every compact set $K\subset\mathbb{R}^{n}$ almost surely:
\[P(\omega\in\Omega:\psi(K)<\infty\text{ for all compact }K\subset\mathbb{R}^{n})=1\]
where $\psi(\cdot)$ is a counting measure. Let $x_{i}\in X$ be a given point of this realization.

Consider the annulus $A=\{x\in\mathbb{R}^{n}: c_{1}\leq|x-x_{i}|\leq c_{2}\}$, where $c_{1},c_{2}$ are positive constants with $c_{1}\leq c_{2}$.

 We connect the point $x_{i}$ with all the points in $A$ that are given from $X$. For this purpose we choose the connection function
 \[g(|x-x_{i}|)=
	\begin{cases}
	 1 & \mbox{ if } c_{1}\leq|x-x_{i}|\leq c_{2} \\
   0 & \mbox{ otherwise } \\
  \end{cases}\]

 For a point $x_{j}\in A$, we denote by $l_{ij}(\omega)=l(x_{i},x_{j})$ the line segment with endpoints $x_{i},x_{j}$ and let $T_{c_{1}/2}(l_{ij})(\omega)$ the tube of radius $c_{1}/2$ surrounding $l_{ij}$. Let now $\displaystyle T(x_{i})(\omega)=\cup_{j}T_{c_{1}/2}(l_{ij})(\omega)$ and $\displaystyle G(\omega,c_{1}/2)= \cup_{i}T(x_{i})(\omega)$ for all points $x_{i}$ of the process.

 Thus, the set $G(\omega,c_{1}/2)$ is the union of random tubes obtained from the given realization of the point process. Let $\Omega(\omega,c_{1}/2)=\mathbb{R}^{n}\setminus G(\omega,c_{1}/2)$.

 We define the indicator function
\[\displaystyle a(\omega,x)=1-min\{X_{F(\omega, c_{1}/2)},1\}\] which is zero in the union of tubes and one elsewhere.

Let $\Omega$ be an open, bounded domain of $\mathbb{R}^{n}$ and consider the random functional  $\displaystyle F(\omega)(u,\Omega)=\int_{\Omega}a(\omega,x)f(\omega,Du)dx=\int_{G(\omega)\cap \Omega}f(\omega,Du)dx$ for $u\in W_{\Phi}^{1}(\Omega)$. This functional is periodic in law and independent at large distances, thus ergodic. Furthermore let $\displaystyle(\rho_{\varepsilon}F)(u,A)=\varepsilon^{n}F(\rho_{\varepsilon} u,\rho_{\varepsilon}A)$ where
 $\displaystyle(\rho_{\varepsilon}u)(x)=\frac{1}{\varepsilon}u(\varepsilon x)$, $\displaystyle(\rho_{\varepsilon}A)=\{x\in\mathbb{R}^{n}:\varepsilon x\in A\}$

Then the family
\begin{displaymath}
F^{\varepsilon}(u,\Omega)=\rho_{\varepsilon}F(u,\Omega)
\end{displaymath}
satisfies the assumptions of theorem $3$. Note that the $\rho_{\varepsilon}-$ homothetic functional is the functional obtained if we scale by $\varepsilon$ the distance between the connected points of the set $F(\omega,c_{1}/2)$ that corresponds to the union of tubes $\varepsilon F=F(\varepsilon\omega,\varepsilon c_{1}/2)$, where $\varepsilon\omega$ maps to the point measure whose support is $\{\varepsilon x_{i}\}$ and $\{x_{i}\}$ is the support of $X(\omega)$. Also, the scaling properties of this model are the same (in terms of distribution) with the model that we have if we choose

\[g_{\varepsilon}(|x-x_{i}|)=
	\begin{cases}
	1 & \mbox{ if } c_{1}\varepsilon\leq |x-x_{i}|\leq c_{2}\varepsilon \\
  0 & \mbox{otherwise} \\
  \end{cases}\]
with density function $\lambda/\varepsilon$. Let us define $G^{\varepsilon}(\omega)=\varepsilon G=G(\varepsilon\omega,\varepsilon c_{1}/2)$ and $\Omega^{\varepsilon}(\omega)=\varepsilon \Omega(\omega)=\mathbb{R}^{n}\setminus G^{\varepsilon}(\omega)$.

We may also model a domain perforated with balls at random positions but nonintersecting. Instead of constructing tubes, we let every point be the center of a ball with radius $\displaystyle\rho(\omega)\leq\min d(x_{i},x_{j})(\omega)$, where the minimum is taken over all the pairs of points $x$ of $X(\omega)$. Note that, without any affect to our proofs, we may assume that $\rho(\omega)$ is identically distributed random variable taking maximum value $\min d(x_{i},x_{j})(\omega)/4$. We consider for simplicity the first case. According to this construction, we obtain a domain randomly perforated with balls of radius and with minimal distance between them. Finally, we define

$\displaystyle G^{\varepsilon}(\omega)=\bigcup_{i\geq 1}B(\varepsilon\rho(\omega),\varepsilon x_{i})\cap \Omega$ and $\Omega^{\varepsilon}(\omega)\cap\Omega=\Omega\setminus G^{\varepsilon}(\omega)$. Note that $\text{meas}G^{\varepsilon}(\omega)$ tends to zero as $\varepsilon\rightarrow 0$.

\section{Application to homogenization problems}
\subsection{Homogenization of pde's with generalized growth conditions}
We study the homogenization of the Dirichlet problem

\begin{equation}
\begin{array}{l}
\text{div}A(x,u^{\varepsilon},Du^{\varepsilon})+B(x,u^{\varepsilon},Du^{\varepsilon})=f(x)\text{ in }\Omega^{\varepsilon}, u^{\varepsilon}-u_{0}\in W_{0,G}^{1}(\Omega^{\varepsilon})
\end{array}
\label{eq:14}
\end{equation}
where $\Omega^{\varepsilon}=\Omega^{\varepsilon}(\omega)$ is a randomly perforated domain (as in section $4.1$ for instance), $\Omega^{\varepsilon}=\Omega\setminus G^{\varepsilon}$, $G$ to be precisely defined below.
 We assume that the following structure conditions hold:

\begin{equation}
\begin{array}{l}
\displaystyle p\cdot A\geq |p|g(|p|)-a_{1}g\left(\frac{|z|}{R}\right)\frac{|z|}{R}-a_{2}
\end{array}
\label{eq:15}
\end{equation}

\begin{equation}
\begin{array}{l}
\displaystyle|A|\leq a_{3}g(|p|)+a_{4}g\left(\frac{|z|}{R}\right)+a_{5}
\end{array}
\label{eq:16}
\end{equation}

where $a_{1}$, $a_{2}$, $a_{3}$, $a_{4}$, $a_{5}$ are nonnegative constants and $g$ is a $C^{1}$ function satisfying

\begin{equation}
\begin{array}{l}
\displaystyle \delta\leq\frac{tg'(t)}{g(t)}\leq g_{0} \text{ if } t>0
\end{array}
\label{eq:17}
\end{equation}
for some $\delta>0$.

We define $\displaystyle G(t)=\int_{0}^{t}g(s)~ds$. It can be shown \cite{Lieb91} that $G$ is twice differentiable, convex function and it satisfies

\begin{gather}
\displaystyle\frac{tg(t)}{1+g_{0}}\leq G(t)\leq tg(t) \text{ if }t\geq0,\\
\displaystyle\frac{G(a)}{G(b)}\leq\frac{a}{b} \text{ if }b\geq a>0,\\
\displaystyle g(t)\leq g(2t)\leq 2^{g_{0}}g(t) \text{ if }t\geq0,\\
\displaystyle ag(b)\leq ag(a)+bg(b)\text{ if }a,b\geq0
\end{gather}

The smoothness of solutions for the equation $(5.1)$ under the above structure conditions have been derived in \cite{Lieb91}.

If in addition  $tg(t)\leq C$ for $t\geq0$, $(5.5)$ implies that $G$ satisfies the $\Delta_{2}-$condition. To see this, note that from $(5.5)$,
\[\frac{C}{t}\geq\frac{g(t)}{G(t)}=\frac{G'(t)}{G(t)}=(\log G(t))'\]
and consequently
\[\log\frac{G(2t)}{G(t)}=\int_{t}^{2t}\frac{g(s)}{G(s)}~ds\leq C\log2\]
i.e.
\[G(2t)\leq 2^{C}G(t)\]
Thus, $G\in\Delta_{2}$ with $k=2^{c}$. The additional assumption $tg(t)\leq C$ is also included in \cite{Lieb91}[Lemma $2.1$] to derive $L^{\infty}$ estimates for the solutions.

We denote by $W_{G}^{1}(\Omega)$ the Orlicz-Solobev space as defined in section $2.1$. We seek the asymptotic behavior of the family $u^{\varepsilon}$ as $\varepsilon\rightarrow 0$ in the general variational form

\begin{equation}
\begin{array}{l}
\displaystyle \inf\left\{\int_{\Omega^{\varepsilon}}f(x,u^{\varepsilon},Du^{\varepsilon})~dx:u^{\varepsilon}-u_{0}\in W_{0,G}^{1}(\Omega^{\varepsilon})\right\}
\end{array}
\label{eq:18}
\end{equation}
where $u_{0}\in C^{1}(\bar{\Omega})$, $f(x,u,p)$ is a measurable function defined in $\Omega\times\mathbb{R}\times\mathbb{R}^{n}$, continuously differentiable with respect to $u,p$ and satisfies
\begin{gather}
C_{1}G(|p|)-C_{2}G(u)\leq f(x,u,p)\leq C_{3}(1+G(u)+G(|p|)),\\
|f(x,u,p)-f(x,v,q)|
\leq C_{4}(1+g(u)+g(v)+g(|p|)+g(|q|))(|u-v|+|p-q|),\\
f(x,u,p)-f(x,u,q)-\sum_{i=1}^{n}f_{q_{i}}(x,u,q)(p_{i}-q_{i})\geq 0
\end{gather}

with $f_{p_{i}}=A_{i}$, $f_{u}=B$.

\begin{thm}[\cite{Lieb91}]
Suppose that $g$ satisfies $(5.4)$
\begin{gather}
 p\cdot A\geq |p|g(|p|)-a_{1}g(|p|)|p|-a_{2},\\
|A|\leq a_{3}g(|p|)+a_{5},\\
|B|\leq b_{0}g(|p|)|p|+b_{1}
\end{gather}

for $x\in\Omega^{\varepsilon}$, $|z|\leq M$. If $u^{\varepsilon}\in L^{\infty}\cap W_{G}^{1}$ solves $Lu^{\varepsilon}=0$ where \[Lz=\operatorname*{div}A(x,z,p)+B(x,z,p)\] with $|u^{\varepsilon}|\leq M$ in $\Omega^{\varepsilon}$. Then $u^{\varepsilon}$ is locally Holder continuous with
\[\operatorname*{osc}_{B_{\rho}}u^{\varepsilon}\leq C\left(\frac{\rho}{R}\right)^{\alpha}\left(\operatorname*{osc}_{B_{R}}u^{\varepsilon}+\chi R\right)\]
for $\alpha>0$, $C=C(a_{3},b_{0}M,g_{0},n,\delta)$ and concentric balls $B_{\rho}$, $B_{R}$ in $\Omega^{\varepsilon}$, $0<\rho\leq R\leq 1$.
\end{thm}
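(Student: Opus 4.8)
The plan is to establish this via the De Giorgi iteration, following \cite{Lieb91}, with the Young function $G$ playing the role that powers play for the $p$-Laplacian and with the consequences $(5.5)$--$(5.8)$ of the structure condition $(5.4)$ supplying the algebraic identities that replace homogeneity. Fix concentric balls $B_{R}\subset\Omega^{\varepsilon}$ and write $M(r)=\sup_{B_{r}}u^{\varepsilon}$, $m(r)=\inf_{B_{r}}u^{\varepsilon}$, $\omega(r)=M(r)-m(r)$. The target is an oscillation-decay inequality
\[
\omega(R/2)\le\theta\,\omega(R)+\chi R,\qquad \theta=\theta(a_{3},b_{0}M,g_{0},n,\delta)<1,
\]
with $\chi$ assembled from $a_{2},a_{5},b_{1}$; once this is in hand, the displayed estimate follows by iterating over the dyadic radii $2^{-j}R$ and invoking the elementary lemma that $\omega(\rho)\le C(\rho/R)^{\alpha}(\omega(R)+\chi R)$ with $\alpha=\log_{2}(1/\theta)$.

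\textbf{Caccioppoli inequality on level sets.} For a level $k$ and a cutoff $\eta\in C_{0}^{\infty}(B_{R})$ with $\eta\equiv 1$ on $B_{R'}$ and $|D\eta|\le 2/(R-R')$, I would test $Lu^{\varepsilon}=0$ against $\varphi=\eta^{\,g_{0}+1}e^{\lambda(u^{\varepsilon}-k)^{+}}(u^{\varepsilon}-k)^{+}$ with $\lambda\sim b_{0}$ (the Ladyzhenskaya--Uraltseva exponential-weight device); since $|u^{\varepsilon}|\le M$, the weight lies between $1$ and $e^{4b_{0}M}$, which is the source of the $b_{0}M$-dependence of the constants. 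The critical term produced by $B$, controlled through $(5.15)$ by a multiple of $b_{0}\,g(|Du^{\varepsilon}|)|Du^{\varepsilon}|\,\varphi$, is absorbed into the coercive term coming from $p\cdot A$ via $(5.13)$, while the term generated by $A$ tested against $D\eta$ is handled using $(5.14)$, Young's inequality in the complementary pair $(G,\widetilde G)$, and the $\Delta_{2}$-estimate $g(2t)\le 2^{g_{0}}g(t)$ of $(5.7)$. The outcome is
\[
\int_{A_{k,R'}}G(|Du^{\varepsilon}|)\,dx\;\le\;C\!\int_{A_{k,R}}\!\Big[\,G\!\Big(\tfrac{(u^{\varepsilon}-k)^{+}}{R-R'}\Big)+1\,\Big]dx,\qquad A_{k,r}:=B_{r}\cap\{u^{\varepsilon}>k\},
\]
and the mirror estimate holds for $(k-u^{\varepsilon})^{+}$.

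\textbf{Iteration and the density alternative.} Because $(5.4)$ forces $t^{\delta}\lesssim g(t)\lesssim t^{g_{0}}$, the space $W_{G}^{1}$ embeds locally into $L^{1+\mu}$ for some $\mu>0$; feeding this integrability gain into the Caccioppoli inequality and iterating over levels $k_{j}\uparrow$ and radii $R_{j}\downarrow R/2$ yields both a local sup-bound and a measure-to-pointwise alternative: there is $\gamma\in(0,1)$ such that if $|A_{k_{0},R}|\le\gamma|B_{R}|$ with $k_{0}=\tfrac12(M(R)+m(R))$, then $\sup_{B_{R/2}}u^{\varepsilon}\le k_{0}+\tfrac14\omega(R)+cR$, hence $\omega(R/2)\le\tfrac34\omega(R)+cR$; the symmetric statement disposes of the case $|\{u^{\varepsilon}<k_{0}\}\cap B_{R}|\le\gamma|B_{R}|$. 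If neither density bound holds, the De Giorgi isoperimetric inequality applied between $\{u^{\varepsilon}\le k_{0}\}$ and $\{u^{\varepsilon}\ge k_{0}+2^{-N}\omega(R)\}$ shows that $|\{k_{0}<u^{\varepsilon}<k_{0}+2^{-N}\omega(R)\}\cap B_{R}|<\gamma|B_{R}|$ once $N=N(\gamma,n,g_{0},\delta)$ is large, reducing the situation to the first case with $k_{0}$ raised to $k_{0}+2^{-N}\omega(R)$. Collecting cases gives $\omega(R/2)\le(1-2^{-N-2})\,\omega(R)+\chi R$, which is the required inequality.

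\textbf{Main obstacle.} The delicate point is exactly the loss of homogeneity. A De Giorgi recursion of the form $Y_{j+1}\le C\,b^{j}Y_{j}^{1+\nu}$ self-improves by power scaling, whereas with a general $G$ one must track how $G$ behaves under the rescaling $t\mapsto t/(R-R')$ and under multiplication by the geometric iteration factors $2^{j}$, and one must keep every constant independent of the fine shape of $G$ beyond $g_{0},\delta$. The inequalities $(5.5)$--$(5.8)$ --- especially $G(a)/G(b)\le a/b$ for $b\ge a>0$ and $a\,g(b)\le a\,g(a)+b\,g(b)$ --- are precisely what makes all these estimates uniform; carrying them through every step is the technical heart of the argument, and it is done in full in \cite{Lieb91}.
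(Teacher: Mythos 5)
The paper gives no proof of this theorem: it is imported verbatim from \cite{Lieb91} (the citation is in the theorem header), so there is no internal argument to compare yours against. Judged on its own terms, your outline is the correct and standard route, and it is essentially the strategy of the cited source: a level-set Caccioppoli inequality obtained with the Ladyzhenskaya--Uraltseva exponential test function $\eta^{g_0+1}e^{\lambda(u^\varepsilon-k)^+}(u^\varepsilon-k)^+$ (which is exactly where the $b_0M$-dependence enters, via $|u^\varepsilon|\le M$), followed by De Giorgi iteration, the measure-density alternative, the De Giorgi isoperimetric lemma for the intermediate case, and dyadic iteration of the resulting oscillation decay. You also correctly identify the genuine difficulty --- the lack of homogeneity of $G$ --- and the correct remedy, namely the inequalities $(5.18)$--$(5.21)$ together with the fact that $\delta\le tg'(t)/g(t)\le g_0$ pins $g$ between the powers $t^\delta$ and $t^{g_0}$, which is what supplies the Sobolev gain $\nu>0$ needed for the recursion $Y_{j+1}\le Cb^jY_j^{1+\nu}$ to close and what makes all constants depend only on $(a_3,b_0M,g_0,n,\delta)$.

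Two caveats. First, this is an outline rather than a proof: the technical heart (carrying the Orlicz estimates through every step of the iteration, in particular verifying that the rescalings $t\mapsto t/(R-R')$ and the geometric factors $2^j$ interact with $G$ only through $g_0$ and $\delta$) is explicitly deferred to \cite{Lieb91}, so as a self-contained argument it is incomplete. Second, the structure inequality as restated in the theorem, $p\cdot A\ge|p|g(|p|)-a_1g(|p|)|p|-a_2$, is only coercive if $a_1<1$ (and is almost certainly a misprint for the condition \eqref{eq:15}, where the $a_1$-term involves $z/R$ rather than $p$); your absorption of the $B$-term into $p\cdot A$ tacitly uses this, and it would be worth stating which version of the hypothesis you are actually running the Caccioppoli argument with, since with the literal form and $a_1\ge1$ the first step already fails.
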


\subsection{Passing the limit}
In this section, we improve the homogenization method of \cite{Khrus05}. For the convinience of the reader we present it explicitely.
Taking $u_{0}\in C^{1}(\Omega)$ and using the smoothness of the solution from the last section, we can extend $u^{\varepsilon}$ to $\Omega$ by taking $u^{\varepsilon}=u_{0}$ in $\Omega\setminus\Omega^{\varepsilon}$. We keep the notation $u^{\varepsilon}$ for the extended function. Then,
\[\|u^{\varepsilon}\|_{W_{G}^{1}(\Omega)}\leq c\|f\|_{W_{G}^{1}(\Omega)}\]
and hence, the family $u^{\varepsilon}$ is compact in $C^{\alpha}(\Omega)$ and weakly compact in $W_{G}^{1}(\Omega)$. Thus, we can extract a subsequence, still denoted by $u^{\varepsilon}$ that converges weakly to a function $u\in C^{\alpha}(\Omega)\cap W_{G}^{1}(\Omega)$.

We denote by $Q_{h}^{x}=Q(x,h)$ the cube centered at $x$ of size $h>0$ and we define the function
\begin{equation}
\begin{array}{l}
\displaystyle R(x,v^{\varepsilon},\nabla v^{\varepsilon})=f(x,0,\nabla v^{\varepsilon})+h^{-1-\gamma}G(v^{\varepsilon}-b)
\end{array}
\label{eq:24}
\end{equation}

where $\gamma$ is a positive parameter. We also define the capacity-type functionals
\begin{equation}
\begin{array}{l}
\displaystyle \text{cap}_{f}(\omega,x,\varepsilon,h,b)=\inf_{v^{\varepsilon}}\int_{Q_{x}^{h}\cap\Omega^{\varepsilon}} f(x,0,\nabla v^{\varepsilon})~dx
\end{array}
\label{eq:25}
\end{equation}
and
\begin{equation}
\begin{array}{l}
\displaystyle \operatorname*{cap}(\omega,x,\varepsilon,h,b)=\inf_{v^{\varepsilon}}\int_{Q_{x}^{h}\cap\Omega^{\varepsilon}} R(x,v^{\varepsilon},\nabla v^{\varepsilon})~dx
\end{array}
\label{eq:26}
\end{equation}
where the infimum is taken over the set $\{v^{\varepsilon}\in W_{G}^{1}(\Omega):v^{\varepsilon}=0\text{ in }\Omega\setminus\Omega^{\varepsilon}\}$. Note that theorem $3$ shows that the limit
\[\lim_{h\rightarrow 0}\lim_{\varepsilon\rightarrow 0}\frac{\operatorname*{cap}_{f}(\omega,x,\varepsilon,h,b)}{h^{n}}=c(x)\]
exists for every point $x\in\Omega$. In addition, since our functionals are independent at large distances, the ergodicity implies that the limit $c(x)=c_{0}$ is constant. The continuity of $v^{\varepsilon}$ shows that the integral $\displaystyle\int_{Q_{x}^{h}\cap\Omega^{\varepsilon}}G(v^{\varepsilon}-b)~dx$ can estimated in terms of $\displaystyle\int_{Q_{x}^{h}\cap\Omega^{\varepsilon}}G(|\nabla v^{\varepsilon}|)~dx$ (see also \cite{Xing10} for Poincare type inequalities) times a factor $h^{1+\gamma}$.

Let us also consider the limit
\[\lim_{h\rightarrow 0}\lim_{\varepsilon\rightarrow 0}\frac{\operatorname*{cap}(\omega,x,\varepsilon,h,b)}{h^{n}}=c_{0}(b)\]

Furthermore, we assume that for every $x\in\Omega$,
\[\limsup_{\varepsilon\rightarrow 0}\frac{\operatorname*{cap}(\omega,x,\varepsilon,h,b)}{h^{n}}\leq C(1+g(|b|))|b|\]

\begin{thm}
The (extended) family of minimizers of $(5.9)$ converges weakly to the minimizer $u\in C^{\alpha}(\Omega)\cap W_{G}^{1}(\Omega)$ of
\begin{equation}
\begin{array}{l}
\displaystyle \inf\left\{\int_{\Omega}f(x,u,Du)+c_{0}(u-u_{0})~dx:u-u_{0}\in W_{0,G}^{1}(\Omega)\right\}
\end{array}
\label{eq:27}
\end{equation}
\end{thm}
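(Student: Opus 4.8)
The plan is to read the conclusion as a $\Gamma$-convergence statement for the constrained functionals $F^{\varepsilon}(v)=\int_{\Omega^{\varepsilon}}f(x,v,Dv)\,dx$ on $L_{G}(\Omega)$, with the convention $F^{\varepsilon}(v)=+\infty$ unless $v-u_{0}\in W^{1}_{0,G}(\Omega^{\varepsilon})$ and $v=u_{0}$ on $\Omega\setminus\Omega^{\varepsilon}$: I would prove that $F^{\varepsilon}$ $\Gamma(L_{G})$-converges to $F_{0}(v)=\int_{\Omega}f(x,v,Dv)\,dx+\int_{\Omega}c_{0}(v-u_{0})\,dx$ (again $+\infty$ off the affine subspace $v-u_{0}\in W^{1}_{0,G}(\Omega)$), and then invoke the fundamental theorem of $\Gamma$-convergence together with equicoercivity, so that the minima converge and every cluster point of a sequence of minimizers minimizes $F_{0}$. \emph{Step 1 (compactness and coercivity).} As already noted before the statement, the a priori estimate $\|u^{\varepsilon}\|_{W^{1}_{G}(\Omega)}\le c\|f\|_{W^{1}_{G}(\Omega)}$, the compact embedding of $W^{1}_{G}(\Omega)$ into $L_{B}(\Omega)$ from Section 2, and Lieb's interior Hölder estimate quoted above give, up to a subsequence, $u^{\varepsilon}\rightharpoonup u$ weakly in $W^{1}_{G}(\Omega)$, $u^{\varepsilon}\to u$ strongly in $L_{B}(\Omega)$ and a.e.\ in $\Omega$, and $u\in C^{\alpha}(\Omega)$; since the extension preserves traces, $u-u_{0}\in W^{1}_{0,G}(\Omega)$. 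Equicoercivity of $\{F^{\varepsilon}\}$ follows from the lower bound $f(x,v,p)\ge C_{1}G(|p|)-C_{2}G(v)$, the Poincaré inequality in $W^{1}_{0,G}$ from Section 2.1, and the $\Delta_{2}$-property of $G$, which absorb the negative $G(v)$ term once $C_{1}$ dominates.

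\emph{Step 2 (the $\liminf$ inequality).} Fix $u^{\varepsilon}\rightharpoonup u$. I would tile $\Omega$ by cubes $Q^{x_{i}}_{h}$ of side $h$ and, in each cube, split $\int_{Q^{x_{i}}_{h}\cap\Omega^{\varepsilon}}f(x,u^{\varepsilon},Du^{\varepsilon})\,dx$ into a bulk part, away from the perforation, and a part on a small neighbourhood of $G^{\varepsilon}\cap Q^{x_{i}}_{h}$. For the bulk part I would use the weak lower semicontinuity of $v\mapsto\int f(x,v,Dv)$ from Propositions 1 and 2 (valid for integrands $f(x,z,p)$ by the remark after Proposition 9, the $z$-slot being controlled by the strong $L_{B}$-convergence of $u^{\varepsilon}$), so that summing over $i$ and letting $h\to0$ recovers $\int_{\Omega}f(x,u,Du)\,dx$. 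For the hole part, Lieb's Hölder estimate lets me freeze $u^{\varepsilon}$ to the constant $b^{\varepsilon}_{i}=u^{\varepsilon}(x_{i})-u_{0}(x_{i})$ on the $\varepsilon$-scale holes up to an error $o(h^{n})$, while the two-sided growth and the Lipschitz bound on $f$ permit the comparison of $f(x,u^{\varepsilon},\cdot)$ with the capacity integrand, whose penalty term $h^{-1-\gamma}G(\cdot-b)$ absorbs the deviation from $b^{\varepsilon}_{i}$; thus this contribution is bounded below by $\operatorname*{cap}(\omega,x_{i},\varepsilon,h,b^{\varepsilon}_{i})$ up to $o(h^{n})$. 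Letting $\varepsilon\to0$ first and then $h\to0$, the random homogenization theorem established above, applied to the dominated subadditive set functions $x\mapsto\operatorname*{cap}(\omega,x,\varepsilon,h,\cdot)$ and combined with their independence at large distances from the continuum-percolation construction of Section 4.1 (whence ergodicity), gives the constant limits $\operatorname*{cap}(\omega,x_{i},\varepsilon,h,b^{\varepsilon}_{i})/h^{n}\to c_{0}(b^{\varepsilon}_{i})$ recorded in the text, while $b^{\varepsilon}_{i}\to u(x_{i})-u_{0}(x_{i})$ by a.e.\ convergence; continuity of $c_{0}$ and of $u$ turns $\sum_{i}c_{0}(b^{\varepsilon}_{i})h^{n}$ into $\int_{\Omega}c_{0}(u-u_{0})\,dx$. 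Combining the two pieces yields $\liminf_{\varepsilon\to0}F^{\varepsilon}(u^{\varepsilon})\ge\int_{\Omega}f(x,u,Du)\,dx+\int_{\Omega}c_{0}(u-u_{0})\,dx$.

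\emph{Step 3 (recovery sequence and conclusion).} Given an admissible competitor $v$, I would construct $v^{\varepsilon}$ by gluing, inside each cube $Q^{x_{i}}_{h}$ and by means of cutoff functions, a near-optimal corrector of the capacity problem $\operatorname*{cap}(\omega,x_{i},\varepsilon,h,v(x_{i})-u_{0}(x_{i}))$ onto the bulk profile $v$, the interface errors being controlled by the fundamental $L_{G}$-estimate (Proposition 7); the Lipschitz bound on $f$ and the doubling inequality $g(2t)\le 2^{g_{0}}g(t)$ let me pass from $v$ to $v^{\varepsilon}$ with error $O(h)+O(\varepsilon)$, and the standing hypothesis $\limsup_{\varepsilon\to0}\operatorname*{cap}(\omega,x,\varepsilon,h,b)/h^{n}\le C(1+g(|b|))|b|$ keeps the near-hole contributions uniformly integrable, using $tg(t)\le(1+g_{0})G(t)$ and $v\in W^{1}_{G}(\Omega)$. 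This yields $v^{\varepsilon}=u_{0}$ on $\Omega\setminus\Omega^{\varepsilon}$, $v^{\varepsilon}\to v$ in $L_{G}(\Omega)$, and $\limsup_{\varepsilon\to0}F^{\varepsilon}(v^{\varepsilon})\le\int_{\Omega}f(x,v,Dv)\,dx+\int_{\Omega}c_{0}(v-u_{0})\,dx$. Steps 2 and 3 give the $\Gamma(L_{G})$-convergence $F^{\varepsilon}\to F_{0}$; since $F_{0}$ satisfies the hypotheses of our existence theorem (Theorem 3, the lower-order term being bounded below after Poincaré) it has a minimizer, and the fundamental theorem of $\Gamma$-convergence then forces the extended family $u^{\varepsilon}$ to converge weakly in $W^{1}_{G}(\Omega)$ — hence, by the Hölder bound, in $C^{\alpha}(\Omega)$ — to a minimizer $u$ of $F_{0}$; when $f$ is strictly convex in the gradient this minimizer is unique and the whole family converges, and otherwise the statement holds along subsequences.

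\emph{Main obstacle.} The delicate point is Step 2, and within it the simultaneous management of three scales: freezing $u^{\varepsilon}$ near each hole by Lieb's Hölder estimate, sending $\varepsilon\to0$ strictly before $h\to0$ so the ergodic machinery applies to the capacities, and checking that $x\mapsto\operatorname*{cap}(\omega,x,\varepsilon,h,\cdot)$ really enjoys the subadditivity, periodicity-in-law and (via percolation) ergodicity needed to obtain $\operatorname*{cap}/h^{n}\to c_{0}$ for the random perforation of Section 4.1, together with the matching upper bound that makes the strange term finite.
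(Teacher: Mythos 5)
Your overall strategy coincides with the paper's: an upper bound obtained by gluing near-optimal capacity correctors onto a smooth competitor with a partition of unity (your Step 3 is exactly the paper's construction of $w_{h}^{\varepsilon}$ and the estimates \eqref{eq:30}--\eqref{eq:34}), a lower bound by tiling with cubes and comparing the near-hole energy with $\operatorname*{cap}(\omega,x^{\alpha},\varepsilon,h,b_{\alpha})$, and compactness from the a priori bound and the H\"older estimate. Recasting this as $\Gamma(L_{G})$-convergence plus the fundamental theorem is a legitimate repackaging and changes nothing essential.

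The one place where your sketch has a genuine gap is the mechanism of the lower bound (your Step 2). You write that the penalty $h^{-1-\gamma}G(\cdot-b)$ ``absorbs the deviation from $b_{i}^{\varepsilon}$,'' but the comparison runs the other way: to bound $\int_{Q\cap\Omega^{\varepsilon}}f(x,u^{\varepsilon},Du^{\varepsilon})$ from below by $\operatorname*{cap}$ you must \emph{add} the penalty term evaluated at your candidate and then show that this added term is $o(h^{n})$; this is precisely the content of the paper's level-set decomposition $\Omega_{1\alpha}^{\varepsilon},\Omega_{2\alpha}^{\varepsilon},\Omega_{3\alpha}^{\varepsilon}$ with thresholds $b_{\alpha}^{\min},b_{\alpha}^{\max}$ and the calibrated choice $h_{1}=G^{-1}(h^{1+\gamma})$ giving $\operatorname*{meas}[\Omega_{1\alpha}^{\varepsilon}\cup\Omega_{2\alpha}^{\varepsilon}]=O(h^{n+1+\gamma})$. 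Moreover, the candidate you feed into the capacity infimum must vanish on $\Omega\setminus\Omega^{\varepsilon}$ and be controlled near the holes; the paper achieves this by first replacing $u^{\varepsilon}$ with $u_{\delta}^{\varepsilon}=u^{\varepsilon}+W_{\delta}^{\varepsilon}$ via Lemma 4 (the Khruslov corrector lemma), so that the sequence tracks a \emph{smooth} $u_{\delta}$ while keeping the boundary condition on the perforation, and then defines the explicit truncation $v_{\alpha}^{\varepsilon}$ on $\Omega_{2\alpha}^{\varepsilon}$. Your sketch invokes neither Lemma 4 nor any truncation, and ``freezing $u^{\varepsilon}$ by the H\"older estimate'' does not by itself produce an admissible competitor for $\operatorname*{cap}$; you correctly identify this as the delicate point, but the idea that closes it is missing.
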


\begin{proof}
We consider a partition of $\Omega$ with cubes $Q^{\alpha}=Q(x^{\alpha},h)$ centered at $x^{\alpha}$ of size $h$, so that $\displaystyle\cup_{\alpha}Q(x^{\alpha},h)$ is a cover of $\Omega$ and the points $x^{\alpha}$ form a periodic lattice of period $h-r$, $r$ to be chosen. Consider a partition of unity $\{\phi_{\alpha}\}$ of $C^{2}$ functions such that
\begin{enumerate}
\item $0\leq\phi_{\alpha}\leq 1$
\item $\phi_{\alpha}=0$ if $x\notin Q^{\alpha}$, $\phi_{\alpha}=1$ if $\displaystyle x\in Q^{\alpha}\setminus\cup_{\beta\neq\alpha}Q^{\beta}$
\item $\displaystyle\sum_{\alpha}\phi_{\alpha}(x)=1$, if $x\in D$
\item $\left|\nabla\phi_{\alpha}\right|\leq C/r$
\end{enumerate}
Let us denote by $v_{\alpha}=v_{\alpha}^{\varepsilon}$ the minimizer of $(6.9)$ in the cube centered at $x^{\alpha}$ with $b=b_{\alpha}$.

From the last assumption and $(6.10)$,
\[\int_{Q^{\alpha}}G(|\nabla v_{\alpha}|)~dx=O(h^{n})\]
and
\[\int_{Q^{\alpha}}G(|v_{\alpha}-b_{\alpha}|)~dx=O(h^{n+1+\gamma})\]

We denote by $\displaystyle\hat{Q}_{h}^{\alpha}=Q_{h}^{\alpha}\setminus\cup_{\beta\neq\alpha}Q_{h}^{\beta}$  the concentric cube centered at $x^{\alpha}$ of size $\hat{h}=h-2r$.
Then,
\begin{align*}&\int_{Q_{h}^{\alpha}\setminus\hat{Q}_{h}^{\alpha}}R(x,v_{\alpha},\nabla v_{\alpha})~dx
\\&=\int_{Q_{h}^{\alpha}}R(x,v_{\alpha},\nabla v_{\alpha})~dx-\int_{\hat{Q}_{h}^{\alpha}}R(x,v_{\alpha},\nabla v_{\alpha})~dx+O(rh^{n-1})
\\&\leq\text{cap}(\omega,x,h,\varepsilon,b)-\text{cap}(\omega,x,\hat{h},\varepsilon,b)+O(rh^{n-1})\end{align*}
So,
\[\int_{(Q_{h}^{\alpha}\setminus\hat{Q}_{h}^{\alpha})\cap\Omega^{\varepsilon}}R(x,v_{\alpha},\nabla v_{\alpha})~dx=o(h^{n})\]
which implies that
\begin{gather}
\int_{(Q_{h}^{\alpha}\setminus\hat{Q}_{h}^{\alpha})\cap\Omega^{\varepsilon}}G(|\nabla v_{\alpha}|)~dx=o(h^{n})\\
\int_{(Q_{h}^{\alpha}\setminus\hat{Q}_{h}^{\alpha})\cap\Omega^{\varepsilon}}G(v_{\alpha}-b_{\alpha})~dx=o(h^{n})h^{1+\gamma}
\end{gather}
Note that these relations imply that the local limit
\[\lim_{h\rightarrow 0}\lim_{\varepsilon\rightarrow 0}\frac{\operatorname*{cap}(\omega,x,\varepsilon,h,b)}{h^{n}}=c(x,b)=c_{0}(b)\]
exists.

Consider a function $w\in C^{1}(\Omega)$ such that $w=u_{0}$ on $\partial\Omega$ and denote by $\mathcal{K}_{\theta}$ the set of the cubes $Q_{h}^{\alpha}$ that cover $\Omega$ such that
$|w(x)-f(x)|>\theta$ for $\theta>0$. We set $b_{\alpha}=w(x^{\alpha})-f(x^{\alpha})$ for each $Q_{h}^{\alpha}\in\mathcal{K}_{\theta}$ and $b_{\alpha}=1$ for $Q_{h}^{\alpha}\notin\mathcal{K}_{\theta}$. For each cube $Q_{h}^{\alpha}$, we define the set
\begin{equation}
\begin{array}{l}
\displaystyle\mathcal{B}^{\alpha}(\delta,\varepsilon,h)=\{x\in Q_{h}^{\alpha}:\operatorname*{sgn}b_{\alpha}\leq|b_{\alpha}|-\delta\}
\end{array}
\label{eq:28}
\end{equation}
 and the function
 \[V_{\alpha}^{\varepsilon}(x)=
 \begin{cases}
 v_{\alpha}^{\varepsilon} & \mbox{ if }x\in\mathcal{B}^{\alpha}(\delta,\varepsilon,h)\\
 b_{\alpha}^{\delta}=(|b_{\alpha}|-\delta)\operatorname*{sgn}b_{\alpha} & \mbox{ if }x\in Q_{h}^{\alpha}\setminus\mathcal{B}^{\alpha}(\delta,\varepsilon,h)
 \end{cases}\]
with $0<\delta\leq\theta/2\ll 1$. To estimate the measure of $\mathcal{B}^{\alpha}(\delta,\varepsilon,h)$, observe that from our assumption and $(5.26)$, for $\varepsilon$ sufficiently small,
\begin{align*}
G(\delta)\operatorname*{meas}\mathcal{B}^{\alpha}(\delta,\varepsilon,h)\leq\int_{\mathcal{B}^{\alpha}(\delta,\varepsilon,h)\cap\Omega^{\varepsilon}}&G(v_{\alpha}-b_{\alpha})~dx
\\&\leq\int_{Q_{h}^{\alpha}\cap\Omega^{\varepsilon}}G(v_{\alpha}-b_{\alpha})~dx\leq Ch^{n+1+\gamma}
\end{align*}
We set $\delta=G^{-1}((h^{-1-\gamma})^{-1+\delta_{1}})$ for $\delta_{1}\in(0,1)$. Then

\begin{equation}
\begin{array}{l}
\displaystyle\operatorname*{meas}\mathcal{B}^{\alpha}(\delta,\varepsilon,h)=O(h^{n})h^{(1+\gamma)\delta_{1}}=o(h^{n})
\end{array}
\label{eq:29}
\end{equation}

as $h\rightarrow 0$. For $w\in C^{2}(\Omega)$ supported in $\Omega$, consider the function
\[w_{h}^{\varepsilon}(x)=w(x)+\sum_{\alpha}\frac{(w(x)-u_{0}(x))}{b_{\alpha}^{\delta}}(V_{\alpha}^{\varepsilon}(x)-b_{\alpha}^{\delta})\phi_{\alpha}(x)\]
so that $w_{h}^{\varepsilon}-u_{0}\in W_{0,G}^{1}(\Omega^{\varepsilon})$. Let us denote by
\[J^{\varepsilon}[v^{\varepsilon}]=\int_{\Omega^{\varepsilon}}f(x,v^{\varepsilon},Dv^{\varepsilon})~dx\]
and note that since $u^{\varepsilon}$ minimizes $J^{\varepsilon}[\cdot]$,
\[J^{\varepsilon}[u^{\varepsilon}]\leq J^{\varepsilon}[w_{h}^{\varepsilon}]\]

To estimate the last inequality, observe that
\[J^{\varepsilon}[w_{h}^{\varepsilon}]\leq\sum_{\alpha}\int_{\hat{Q}_{h}^{\alpha}\cap\Omega^{\varepsilon}}f(x,w_{h}^{\varepsilon},Dw_{h}^{\varepsilon})~dx+\sum_{\alpha,\beta}\int_{(\hat{Q}_{h}^{\alpha}\cap\hat{Q}_{h}^{\beta})\cap\Omega^{\varepsilon}}|f(x,w_{h}^{\varepsilon},Dw_{h}^{\varepsilon})|~dx\]

Taking into account the properties of $V_{\alpha}^{\varepsilon},\phi_{\alpha},w$ and the convexity of $G(\cdot)$ one derives
\begin{equation}
\begin{array}{l}
\displaystyle\lim_{h\rightarrow 0}\limsup_{\varepsilon\rightarrow 0}\sum_{\alpha,\beta}\int_{(\hat{Q}_{h}^{\alpha}\cap\hat{Q}_{h}^{\beta})\cap\Omega^{\varepsilon}}|f(x,w_{h}^{\varepsilon},Dw_{h}^{\varepsilon})|~dx=0
\end{array}
\label{eq:30}
\end{equation}

Introduce the sets $\mathcal{B}_{1}^{\alpha}(\varepsilon,h)=\mathcal{B}^{\alpha}(\delta,\varepsilon,h)\cap(\hat{Q}_{h}^{\alpha}\cap\Omega^{\varepsilon})$ and $\mathcal{B}_{2}^{\alpha}(\varepsilon,h)=(\hat{Q}_{h}^{\alpha}\cap\Omega^{\varepsilon})\setminus \mathcal{B}_{1}^{\alpha}(\varepsilon,h)$. Then,

\begin{equation}
\begin{array}{l}
\displaystyle\int_{\mathcal{B}_{2}^{\alpha}(\varepsilon,h)}f(x,w_{h}^{\varepsilon},Dw_{h}^{\varepsilon})~dx\leq\int_{\hat{Q}_{h}^{\alpha}}f(x,w,Dw)~dx+o(h^{n})
\end{array}
\label{eq:31}
\end{equation}

Furthermore, for $Q_{h}^{\alpha}\in\mathcal{K}_{\theta}$, we have
\begin{align*}
\displaystyle\int_{\mathcal{B}_{1}^{\alpha}(\varepsilon,h)}&f(x,w_{h}^{\varepsilon},Dw_{h}^{\varepsilon})~dx
\\&=\int_{\mathcal{B}_{1}^{\alpha}(\varepsilon,h)}f(x,0,Dv_{\alpha}^{\varepsilon})~dx
+\int_{\mathcal{B}_{1}^{\alpha}(\varepsilon,h)}f(x,w_{h}^{\varepsilon},Dw_{h}^{\varepsilon})-f(x,0,Dv_{\alpha}^{\varepsilon})~dx
\\&\leq\int_{\mathcal{B}_{1}^{\alpha}(\varepsilon,h)}f(x,0,Dv_{\alpha}^{\varepsilon})~dx+c_{1}\int_{\mathcal{B}_{1}^{\alpha}(\varepsilon,h)}g(c_{2}+c_{3}|\nabla v_{\alpha}^{\varepsilon}|)~dx
\\&+O(h)\int_{\mathcal{B}_{1}^{\alpha}(\varepsilon,h)}G(|\nabla v_{\alpha}^{\varepsilon}|)~dx
\end{align*}
 for constants independent of $\varepsilon,h,\delta$. Note that the second term on the right hand side is of order $O(h^{n+1})$. To estimate the first term consider the subsets of $\mathcal{B}_{1}^{\alpha}(\varepsilon,h)$,
\begin{gather}
\displaystyle\mathcal{B}_{11}^{\alpha}(\varepsilon,h)=\{x\in\mathcal{B}_{1}^{\alpha}(\varepsilon,h):c_{2}+c_{3}|\nabla v_{\alpha}^{\varepsilon}|\leq m(h)\}\\
\displaystyle\mathcal{B}_{12}^{\alpha}(\varepsilon,h)=\{x\in\mathcal{B}_{1}^{\alpha}(\varepsilon,h):c_{2}+c_{3}|\nabla v_{\alpha}^{\varepsilon}|> m(h)\}
\end{gather}
where $m(h)=(h^{-1-\gamma})^{\delta_{1}/2}$. From condition $5$ in section $2$,
\begin{align*}
m(h)g(c_{2}+c_{3}|\nabla v_{\alpha}^{\varepsilon}|)&<(c_{2}+c_{3}|\nabla v_{\alpha}^{\varepsilon}|)g(c_{2}+c_{3}|\nabla v_{\alpha}^{\varepsilon}|)
\\&\leq CG(c_{2}+c_{3}|\nabla v_{\alpha}^{\varepsilon}|)\end{align*}

in $\mathcal{B}_{12}^{\alpha}(\varepsilon,h)$. The convexity of $G$ and $(6.27)$ give us

\begin{equation}
\begin{array}{l}
\displaystyle\int_{\mathcal{B}_{12}^{\alpha}(\varepsilon,h)}g(c_{2}+c_{3}|\nabla v_{\alpha}^{\varepsilon}|)~dx=o(h^{n})
\end{array}
\label{eq:32}
\end{equation}
so that
\begin{equation}
\begin{array}{l}
\displaystyle\int_{Q_{h}^{\alpha}\cap\Omega^{\varepsilon}}f(x,w_{h}^{\varepsilon},Dw_{h}^{\varepsilon})~dx\leq\int_{Q_{h}^{\alpha}\cap\Omega}f(x,w_{h}^{\varepsilon},Dw_{h}^{\varepsilon})~dx+\operatorname*{cap}(\omega,x^{\alpha},\varepsilon,h,b_{\alpha})+o(h^{n})
\end{array}
\label{eq:33}
\end{equation}
for cubes $Q_{h}^{\alpha}\in\mathcal{K}_{\theta}$. Similarly we can obtain this inequality for $Q_{h}^{\alpha}\notin\mathcal{K}_{\theta}$.
Letting $\varepsilon\rightarrow 0$, $h\rightarrow 0$ and finally $\theta\rightarrow 0$, we obtain

\begin{equation}
\begin{array}{l}
\displaystyle\lim_{h\rightarrow 0}\limsup_{\varepsilon\rightarrow 0}J^{\varepsilon}[w_{h}^{\varepsilon}]\leq J_{c_{0}}(w)
\end{array}
\label{eq:34}
\end{equation}
and hence
\begin{equation}
\begin{array}{l}
\displaystyle\lim_{\varepsilon\rightarrow 0}J^{\varepsilon}[u^{\varepsilon}]\leq J_{c_{0}}(w)
\end{array}
\label{eq:35}
\end{equation}
for all $w\in C^{1}(\Omega)$ and hence for all $w\in W_{G}^{1}(\Omega)$, with $J_{c_{0}}(u)$ being the functional in $(5.23)$, which is continuous in $W_{G}^{1}(\Omega)$.

To prove the lower bound, we will use the following lemma \cite{Khrus05}:
\begin{lemma}
Suppose $w\in W_{G}^{1}(\Omega)$ with $\displaystyle\|w\|_{G,\Omega}^{1}< 1$. Under the assumption of theorem $8$, there exists a sequence $W^{\varepsilon}\in W_{G}^{1}(\Omega)$ with $W^{\varepsilon}=0$ in $\Omega\setminus\Omega^{\varepsilon}$ converging weakly in $W_{G}^{1}(\Omega)$ to $w$ and, for sufficiently small $\varepsilon$, satisfies
\[\|W^{\varepsilon}\|_{G,\Omega}^{1}\leq\Lambda\left(\|w\|_{G,\Omega}^{1}\right)\]
for some continuous nonnegative function $\Lambda$ with $\displaystyle\lim_{t\rightarrow 0}\Lambda(t)=0$.
\end{lemma}

Now let $u\in W_{G}^{1}(\Omega)$ be a weak limit of a sequence $u^{\varepsilon}$ of solutions of $(5.9)$ extended by $u_{0}$ in $\Omega\setminus\Omega^{\varepsilon}$. Given $\delta>0$ we pick a function $u_{\delta}\in C^{1}(\Omega)$ such that
\begin{equation}
\begin{array}{l}
\displaystyle\|u_{\delta}-u\|_{G,\Omega}^{1}<\delta
\end{array}
\label{eq:36}
\end{equation}
From lemma $4$, it follows that there exists a sequence $\{W_{\delta}^{\varepsilon}\}$ converging weakly to $u_{\delta}-u$. We define $u_{\delta}^{\varepsilon}=u^{\varepsilon}+W_{\delta}^{\varepsilon}$ so that, as $\varepsilon\rightarrow 0$, $u_{\delta}^{\varepsilon}\rightharpoonup u_{\delta}$ and $u_{\delta}^{\varepsilon}=u_{0}$ in $\Omega\setminus\Omega^{\varepsilon}$.
Then,
\[\lim_{\delta\rightarrow 0}\limsup_{\varepsilon\rightarrow 0}\|u_{\delta}^{\varepsilon}-u^{\varepsilon}\|_{G,\Omega}^{1}=0\] that leads to
\[\lim_{\delta\rightarrow 0}\limsup_{\varepsilon\rightarrow 0}|J^{\varepsilon}[u_{\delta}^{\varepsilon}]-J^{\varepsilon}[u^{\varepsilon}]|=0\]
The continuity of $J_{c_{0}}$ implies that it suffices to show
\begin{equation}
\begin{array}{l}
\displaystyle\lim_{\varepsilon\rightarrow 0}J_{c_{0}}[u^{\varepsilon}]\geq J_{c_{0}}[u]
\end{array}
\label{eq:37}
\end{equation}
To prove $(6.37)$, we introduce the sets
\[\Omega_{\theta}=\Omega_{\theta}^{+}\cup\Omega_{\theta}^{-} \qquad\qquad \tilde{\Omega}_{\theta}=\tilde{\Omega}_{\theta}^{+}\cup\tilde{\Omega}_{\theta}^{-}\qquad\qquad \Xi_{\theta}=\Omega\setminus\Omega_{\theta}\]
\[\Omega_{\theta}^{\varepsilon}=\Omega_{\theta}\cup\Omega^{\varepsilon} \qquad\qquad \tilde{\Omega}_{\theta}^{\varepsilon}=\tilde{\Omega}_{\theta}\cup\tilde{\Omega}^{\varepsilon}\qquad\qquad \Xi_{\theta}^{\varepsilon}=\Xi_{\theta}\cap\Omega_{\varepsilon}\]
where
\[\Omega_{\theta}^{\pm}=\left\{c\in\Omega:\pm(u_{\delta}-u_{0})>\theta\right\}\qquad\qquad \tilde{\Omega}_{\theta}^{\pm}=\left\{\cup_{\alpha} Q_{h}^{\alpha}:Q_{h}^{\alpha}\subset\Omega_{\theta}^{\pm}\right\}\]
Since $u_{\delta}$ is smooth,
\[\lim_{h\rightarrow 0}\operatorname*{meas}[\Omega_{\theta}^{\varepsilon}\setminus\tilde{\Omega}_{\theta}^{\varepsilon}]=0\]
The limit of the capacity functional implies that
\[\operatorname*{meas}[G^{\varepsilon}\cap Q_{h}^{\alpha}]=O(h^{n})h^{1+\gamma}\] for sufficiently small $\varepsilon>0$ so that
\[\operatorname*{meas}[G^{\varepsilon}]=o(1)\]
We write $J^{\varepsilon}[u_{\delta}^{\varepsilon}]$ in the following way:
\begin{align*}J^{\varepsilon}[u_{\delta}^{\varepsilon}]=\int_{\tilde{\Omega}_{\theta}^{\varepsilon}}&f(x,u_{\delta}^{\varepsilon},\nabla u_{\delta}^{\varepsilon})~dx
\\&+\int_{\Omega^{\varepsilon}\setminus\tilde{\Omega}_{\theta}^{\varepsilon}}f(x,u_{\delta}^{\varepsilon},\nabla u_{\delta}^{\varepsilon})~dx+\int_{\Xi_{\theta}^{\varepsilon}}f(x,u_{\delta}^{\varepsilon},\nabla u_{\delta}^{\varepsilon})~dx
\end{align*}
From $(6.11)$ and $(6.12)$ we have
\begin{align*}
f(x,u_{\delta}^{\varepsilon},\nabla u_{\delta}^{\varepsilon})\geq f(&x,u_{\delta},\nabla u_{\delta})  \\&+\sum_{i=1}^{n}\frac{\partial f}{\partial u_{x_{i}}}(x,u_{\delta},\nabla u_{\delta})\left(\frac{\partial u_{\delta}^{\varepsilon}}{\partial x_{i}}-\frac{\partial u_{\delta}}{\partial x_{i}}\right)
\\&-(g(u_{\delta}^{\varepsilon})+2g(|\nabla u_{\delta}^{\varepsilon}|)+1)(|u_{\delta}^{\varepsilon}-u_{\delta}|)
\end{align*}
The convergence of $u_{\delta}^{\varepsilon}$ to $u_{\delta}$ and the last estimation implies that
\[\lim_{h\rightarrow 0}\liminf_{\varepsilon\rightarrow 0}\int_{\Omega^{\varepsilon}\setminus\tilde{\Omega}_{\theta}^{\varepsilon}}f(x,u_{\delta}^{\varepsilon},\nabla u_{\delta}^{\varepsilon})~dx\geq 0\]

Furthermore,
\[\int_{\Xi_{\theta}^{\varepsilon}}f(x,u_{\delta}^{\varepsilon},\nabla u_{\delta}^{\varepsilon})~dx=\int_{\Xi_{\theta}}f(x,u_{\delta}^{\varepsilon},\nabla u_{\delta}^{\varepsilon})~dx-\int_{\Xi_{\theta}\cap G^{\varepsilon}}f(x,u_{\delta}^{\varepsilon},\nabla u_{\delta}^{\varepsilon})~dx\]

and

\[\liminf_{\varepsilon\rightarrow 0}\int_{\Xi_{\theta}^{\varepsilon}}f(x,u_{\delta}^{\varepsilon},\nabla u_{\delta}^{\varepsilon})~dx\geq\int_{\Xi_{\theta}}f(x,u_{\delta},\nabla u_{\delta})~dx\]

Let $Q_{h}^{\alpha}$ be a cube in $\tilde{\Omega}_{\theta}^{+}$ and set
\[b_{\alpha}^{\min}=\min_{Q_{h}^{\alpha}}u_{\delta}-h_{1}\qquad b_{\alpha}^{\max}=\max_{Q_{h}^{\alpha}}u_{0}+h_{1}\qquad b_{\alpha}=b_{\alpha}^{\min}-b_{\alpha}^{\max}\]
for $h_{1}>0$ to be chosen. Now we split the set $Q_{h}^{\alpha}\cap\Omega^{\varepsilon}$ into the following nonintersecting sets:
\[\Omega_{1\alpha}^{\varepsilon}=\left\{x\in Q_{h}^{\alpha}\cap\Omega^{\varepsilon}:u_{\delta}^{\varepsilon}<b_{\alpha}^{\max}\right\}\]
\[\Omega_{2\alpha}^{\varepsilon}=\left\{x\in Q_{h}^{\alpha}\cap\Omega^{\varepsilon}:b_{\alpha}^{\max}\leq u_{\delta}^{\varepsilon}\leq b_{\alpha}^{\min}\right\}\]
\[\Omega_{3\alpha}^{\varepsilon}=\left\{x\in Q_{h}^{\alpha}\cap\Omega^{\varepsilon}:u_{\delta}^{\varepsilon}> b_{\alpha}^{\min}\right\}\]

Since $u_{\delta}^{\varepsilon}\rightarrow u_{\delta}$ in the $L_{G}(\Omega)$, for $\varepsilon>0$ small enough
\[\int_{Q_{h}^{\alpha}}G(u_{\delta}^{\varepsilon}-u_{\delta})=O(h^{n+2+2\gamma})\] and hence
\[G(h_{1})\operatorname*{meas}[\Omega_{1\alpha}^{\varepsilon}\cup\Omega_{2\alpha}^{\varepsilon}]\leq\int_{\Omega_{1\alpha}^{\varepsilon}\cup\Omega_{2\alpha}^{\varepsilon}}G(u_{\delta}^{\varepsilon}-u_{\delta})=O(h^{n+2+2\gamma})\]
Choosing $h_{1}=G^{-1}(h^{1+\gamma})$, we get $\operatorname*{meas}[\Omega_{1\alpha}^{\varepsilon}\cup\Omega_{2\alpha}^{\varepsilon}]=O(h^{n+1+\gamma})$
We follow the steps that we did to show $(6.33)$, to get
\begin{equation}
\begin{array}{l}
\displaystyle\liminf_{\varepsilon\rightarrow 0}\int_{\Omega_{1\alpha}^{\varepsilon}\cup\Omega_{3\alpha}^{\varepsilon}}f(x,u_{\delta}^{\varepsilon},\nabla u_{\delta}^{\varepsilon})~dx\geq\int_{Q_{h}^{\alpha}}f(x,u_{\delta},\nabla u_{\delta})~dx+o(h^{n})
\end{array}
\label{eq:38}
\end{equation}

The estimate over $\Omega_{2\alpha}^{\varepsilon}$ follows by introducing the function
\[v_{\alpha}^{\varepsilon}=
\begin{cases}
 0, & \mbox{ in }\Omega_{3\alpha}^{\varepsilon}\cup(G^{\varepsilon}\cup Q_{h}^{\alpha})\\
 u_{\delta}^{\varepsilon}-b_{\alpha}^{\max}, & \mbox{ in } \Omega_{2\alpha}^{\varepsilon}\\
 b_{\alpha}, & \mbox{ in }\Omega_{3\alpha}^{\varepsilon}
\end{cases}\]

Since $u_{\delta}^{\varepsilon}$ are bounded in $\Omega_{2\alpha}^{\varepsilon}$,
\begin{align*}\int_{\Omega_{2\alpha}^{\varepsilon}}f(x,u_{\delta}^{\varepsilon},\nabla u_{\delta}^{\varepsilon})~dx=&\int_{Q_{h}^{\alpha}}f(x,0,\nabla v_{\alpha}^{\varepsilon})~dx  \\&+h^{-1-\gamma}\int_{Q_{h}^{\alpha}}G(v_{\alpha}^{\varepsilon}-b_{\alpha})~dx+o(h^{n})
\end{align*}
so that
\[\int_{\Omega_{2\alpha}^{\varepsilon}}f(x,u_{\delta}^{\varepsilon},\nabla u_{\delta}^{\varepsilon})~dx\geq\operatorname*{cap}(\omega,x^{\alpha},\varepsilon,h,b_{\alpha})+o(h^{n})\]

It follows that
\begin{align*}\liminf_{\varepsilon\rightarrow 0}\int_{Q_{h}^{\alpha}\cap\Omega^{\varepsilon}}f(x,u_{\delta}^{\varepsilon},\nabla u_{\delta}^{\varepsilon})~dx\geq\int_{Q_{h}^{\alpha}}&f(x,u_{\delta},\nabla u_{\delta})~dx
\\&+\liminf_{\varepsilon\rightarrow 0}\operatorname*{cap}(\omega,x^{\alpha},\varepsilon,h,b_{\alpha})+o(h^{n})\end{align*}

The same inequality can be obtained for $Q_{h}^{\alpha}\subset\Omega_{\theta}^{-}$ so that
\[J_{c_{0}}[u]\leq J_{c_{0}}[w]\] for all $w\in W_{0,G}^{1}(\Omega)$. This completes the proof.
\end{proof}

\subsection{p(x)-Laplace type equations}
Homogenization results for the $p(x)-$Laplacian have been studied in \cite{MR2569903}. The problem under consideration is
\begin{gather}
-\operatorname*{div}(|\nabla u^{\varepsilon}|^{p_{\varepsilon}(x)-2}\nabla u^{\varepsilon})+|u^{\varepsilon}|^{p_{\varepsilon}(x)-2}u^{\varepsilon}=f(x) \mbox{ in }\Omega^{\varepsilon}\\
u^{\varepsilon}=0 \mbox{ on }\partial\Omega^{\varepsilon}
\end{gather}
where $p_{\varepsilon}(x)$ is continuous, oscillating function satisfying a modulus of continuity $|p_{\varepsilon}(x)-p_{\varepsilon}(y)|\leq\xi_{\varepsilon}(|x-y|)$ with $\displaystyle\limsup_{\tau\rightarrow 0}\xi_{\varepsilon}(\tau)\ln(1/\tau)=0$. Such equations are defined in Sobolev spaces with variable exponents. Assuming certain convergence properties on $p_{\varepsilon}(\cdot)$, it is shown that the sequence $u_{\varepsilon}$ converges in $W^{1,p_{0}(x)}(\Omega)$, $\displaystyle\lim_{\varepsilon\rightarrow 0}\|p_{\varepsilon}-p_{0}\|_{C^{0}(\Omega)}=0$, to the minimizer $u$ of the energy functional
\[\int_{\Omega}\frac{1}{p_{0}(x)}|\nabla u|^{p_{0}(x)}+\frac{1}{p_{0}(x)}|u|^{p_{0}(x)}+c(x,u)-fu~dx\]
where
\[c(x,b)=\lim_{h\rightarrow 0}\lim_{\varepsilon\rightarrow 0}\frac{c(\varepsilon,h,z,b)}{h^{n}}\]
and
\[c(\varepsilon,h,z,b)=\inf_{v^{\varepsilon}}\int_{Q_{h}^{z}}\frac{1}{p_{\varepsilon}(x)}|\nabla u|^{p_{\varepsilon}(x)}+h^{-1-\gamma}(|v^{\varepsilon}-b|^{p_{\varepsilon}(x)}+|v^{\varepsilon}-b|^{p_{0}(x)})~dx\]

Proceeding as in the proof of theorem $9$, one can show the existence of the capacity limits, assuming appropriately modelled perforated domains in which the ergodic theorem is valid. A setup of $\Gamma-$convergence in Sobolev spaces with variable exponent and the integral representation of variational functionals (see \cite{Coscia02}) would produce similar homogenization results as long as theorem $4$ is proved.

\bibliographystyle{plain}

\bibliography{hcp}

\begin{thebibliography}{10}

\bibitem{Adams75}
Robert~A. Adams.
\newblock {\em Sobolev spaces}.
\newblock Academic Press [A subsidiary of Harcourt Brace Jovanovich,
  Publishers], New York-London, 1975.
\newblock Pure and Applied Mathematics, Vol. 65.

\bibitem{Akcog81}
M.~A. Akcoglu and U.~Krengel.
\newblock Ergodic theorems for superadditive processes.
\newblock {\em J. Reine Angew. Math.}, 323:53--67, 1981.

\bibitem{MR2569903}
B.~Amaziane, S.~Antontsev, L.~Pankratov, and A.~Piatnitski.
\newblock Homogenization of {$p$}-{L}aplacian in perforated domain.
\newblock {\em Ann. Inst. H. Poincar\'e Anal. Non Lin\'eaire},
  26(6):2457--2479, 2009.

\bibitem{Ambro93}
Antonio Ambrosetti and Giovanni Prodi.
\newblock {\em A primer of nonlinear analysis}, volume~34 of {\em Cambridge
  Studies in Advanced Mathematics}.
\newblock Cambridge University Press, Cambridge, 1993.

\bibitem{Benki99}
A.~Benkirane and A.~Elmahi.
\newblock An existence theorem for a strongly nonlinear elliptic problem in
  {O}rlicz spaces.
\newblock {\em Nonlinear Anal.}, 36(1, Ser. A: Theory Methods):11--24, 1999.

\bibitem{Brai98}
Andrea Braides and Anneliese Defranceschi.
\newblock {\em Homogenization of multiple integrals}, volume~12 of {\em Oxford
  Lecture Series in Mathematics and its Applications}.
\newblock The Clarendon Press Oxford University Press, New York, 1998.

\bibitem{Coscia02}
Alessandra Coscia and Domenico Mucci.
\newblock Integral representation and {$\Gamma$}-convergence of variational
  integrals with {$P(X)$}-growth.
\newblock {\em ESAIM Control Optim. Calc. Var.}, 7:495--519 (electronic), 2002.

\bibitem{gammacon}
Gianni Dal~Maso.
\newblock {\em An introduction to {$\Gamma$}-convergence}.
\newblock Progress in Nonlinear Differential Equations and their Applications,
  8. Birkh\"auser Boston Inc., Boston, MA, 1993.

\bibitem{DaMod86}
Gianni Dal~Maso and Luciano Modica.
\newblock Integral functionals determined by their minima.
\newblock {\em Rend. Sem. Mat. Univ. Padova}, 76:255--267, 1986.

\bibitem{DalMod86}
Gianni Dal~Maso and Luciano Modica.
\newblock Nonlinear stochastic homogenization.
\newblock {\em Ann. Mat. Pura Appl. (4)}, 144:347--389, 1986.

\bibitem{Dal86}
Gianni Dal~Maso and Luciano Modica.
\newblock Nonlinear stochastic homogenization and ergodic theory.
\newblock {\em J. Reine Angew. Math.}, 368:28--42, 1986.

\bibitem{Tru71}
Thomas~K. Donaldson and Neil~S. Trudinger.
\newblock Orlicz-{S}obolev spaces and imbedding theorems.
\newblock {\em J. Functional Analysis}, 8:52--75, 1971.

\bibitem{Ekeland76}
Ivar Ekeland and Roger Temam.
\newblock {\em Convex analysis and variational problems}.
\newblock North-Holland Publishing Co., Amsterdam, 1976.
\newblock Translated from the French, Studies in Mathematics and its
  Applications, Vol. 1.

\bibitem{dkontog10}
Dimitris Kontogiannis.
\newblock Homogenization and continuum percolation, submitted.

\bibitem{Krasno61}
M.~A. Krasnoselskii and Ja.~B. Rutickii.
\newblock {\em Convex functions and {O}rlicz spaces}.
\newblock Translated from the first Russian edition by Leo F. Boron. P.
  Noordhoff Ltd., Groningen, 1961.

\bibitem{Kufner77}
Alois Kufner, Old{\v{r}}ich John, and Svatopluk Fu{\v{c}}{\'{\i}}k.
\newblock {\em Function spaces}.
\newblock Noordhoff International Publishing, Leyden, 1977.
\newblock Monographs and Textbooks on Mechanics of Solids and Fluids;
  Mechanics: Analysis.

\bibitem{Lieb91}
Gary~M. Lieberman.
\newblock The natural generalization of the natural conditions of ladyzhenskaya
  and uraltseva for elliptic equations.
\newblock {\em Comm. Partial Differential Equations}, 16(2-3):311--361, 1991.

\bibitem{Khrus05}
Vladimir~A. Marchenko and Evgueni~Ya. Khruslov.
\newblock {\em Homogenization of partial differential equations}, volume~46 of
  {\em Progress in Mathematical Physics}.
\newblock Birkh\"auser Boston Inc., Boston, MA, 2006.
\newblock Translated from the 2005 Russian original by M. Goncharenko and D.
  Shepelsky.

\bibitem{Mees96}
Ronald Meester and Rahul Roy.
\newblock {\em Continuum percolation}, volume 119 of {\em Cambridge Tracts in
  Mathematics}.
\newblock Cambridge University Press, Cambridge, 1996.

\bibitem{Xing10}
Yuming Xing.
\newblock Poincar\'e inequalities with {L}uxemburg norms in
  {$L^\phi(m)$}-averaging domains.
\newblock {\em J. Inequal. Appl.}, pages Art. ID 241759, 11, 2010.

\end{thebibliography}

\end{document}